\documentclass[letterpaper,10pt,reqno]{amsart}
\usepackage{indentfirst} 
\usepackage{amssymb}
\usepackage{mathtools} 
\usepackage{mathabx} 
\usepackage{amsthm}
\usepackage{thmtools}
\usepackage{enumitem} 
\usepackage[colorlinks=true]{hyperref} 
\usepackage[dvipsnames]{xcolor} 
\usepackage{ifthen}
\usepackage{tikz}
\usetikzlibrary{decorations.pathreplacing}
\usepackage{tikz-cd}
\usepackage[left=3.2cm, right=3.2cm, bottom=3.4cm]{geometry}
  
\makeatletter

\def\MRbibitem{\@ifnextchar[\my@lbibitem\my@bibitem}

\def\mybiblabel#1#2{\@biblabel{{\hyperref{http://www.ams.org/mathscinet-getitem?mr=#1}{}{}{#2}}}}

\def\myhyperanchor#1{\Hy@raisedlink{\hyper@anchorstart{cite.#1}\hyper@anchorend}}

\def\my@lbibitem[#1]#2#3#4\par{%
  \item[\mybiblabel{#2}{#1}\myhyperanchor{#3}\hfill]#4%
  \@ifundefined{ifbackrefparscan}{}{\BR@backref{#3}}%
  \if@filesw{\let\protect\noexpand\immediate
    \write\@auxout{\string\bibcite{#3}{#1}}}\fi\ignorespaces%
}

\def\my@bibitem#1#2#3\par{%
  \refstepcounter\@listctr
  \item[\mybiblabel{#1}{\the\value\@listctr}\myhyperanchor{#2}\hfill]#3%
  \@ifundefined{ifbackrefparscan}{}{\BR@backref{#2}}%
  \if@filesw\immediate\write\@auxout
    {\string\bibcite{#2}{\the\value\@listctr}}\fi\ignorespaces%
}

\makeatother

\DeclareFontFamily{U} {MnSymbolA}{}
\DeclareFontShape{U}{MnSymbolA}{m}{n}{
   <-6> MnSymbolA5
   <6-7> MnSymbolA6
   <7-8> MnSymbolA7
   <8-9> MnSymbolA8
   <9-10> MnSymbolA9
   <10-12> MnSymbolA10
   <12-> MnSymbolA12}{}
\DeclareFontShape{U}{MnSymbolA}{b}{n}{
   <-6> MnSymbolA-Bold5
   <6-7> MnSymbolA-Bold6
   <7-8> MnSymbolA-Bold7
   <8-9> MnSymbolA-Bold8
   <9-10> MnSymbolA-Bold9
   <10-12> MnSymbolA-Bold10
   <12-> MnSymbolA-Bold12}{}
\DeclareSymbolFont{MnSyA} {U} {MnSymbolA}{m}{n}
 \DeclareFontFamily{U} {MnSymbolC}{}
\DeclareFontShape{U}{MnSymbolC}{m}{n}{
  <-6> MnSymbolC5
  <6-7> MnSymbolC6
  <7-8> MnSymbolC7
  <8-9> MnSymbolC8
  <9-10> MnSymbolC9
  <10-12> MnSymbolC10
  <12-> MnSymbolC12}{}
\DeclareFontShape{U}{MnSymbolC}{b}{n}{
  <-6> MnSymbolC-Bold5
  <6-7> MnSymbolC-Bold6
  <7-8> MnSymbolC-Bold7
  <8-9> MnSymbolC-Bold8
  <9-10> MnSymbolC-Bold9
  <10-12> MnSymbolC-Bold10
  <12-> MnSymbolC-Bold12}{}
\DeclareSymbolFont{MnSyC} {U} {MnSymbolC}{m}{n}

\DeclareMathSymbol{\top}{\mathord}{MnSyA}{219} 
\DeclareMathSymbol{\plus}{\mathord}{MnSyC}{20} 

\declaretheorem[numberwithin=section]{theorem} 
\declaretheorem[sibling=theorem]{proposition} 
\declaretheorem[sibling=theorem]{lemma}
\declaretheorem[sibling=theorem]{corollary}

\declaretheorem[sibling=theorem, style=remark]{claim}
\declaretheorem[sibling=theorem, style=definition]{example}
\declaretheorem[sibling=theorem, style=definition]{definition}
\declaretheorem[sibling=theorem]{remark}

\hypersetup{bookmarksdepth = 3} 
\numberwithin{equation}{section}     

\setlist[enumerate,1]{label={\upshape(\alph*)},ref=\alph*}
\setlist[enumerate,2]{label={\upshape(\arabic*)},ref=\arabic*}
\providecommand{\norm}[1]{\lVert#1\rVert}
\newcommand{\M}{\mathcal{M}}
\newcommand{\R}{\mathbb{R}}
\newcommand{\Z}{\mathbb{Z}}
\newcommand{\N}{\mathbb{N}}

\newcommand{\cK}{\mathcal{K}}

\renewcommand{\d}{~{\rm d}}
\renewcommand{\epsilon}{\varepsilon}

\def\R{{\mathbb R}}

\def\N{{\mathbb N}}
\def\Z{{\mathbb Z}}

\def\B{{\mathcal B}}

\def\F{{\mathcal F}}

\def\M{{\mathcal M}}

\def\supp{\mbox{\rm supp}}

\def\le{\leqslant}
\def\ge{\geqslant}

\def\F{\mathcal{F}}
\def\M{\mathcal{M}}

\newcommand{\vertiii}[1]{{\left\vert\kern-0.25ex\left\vert\kern-0.25ex\left\vert #1 
    \right\vert\kern-0.25ex\right\vert\kern-0.25ex\right\vert}}
\newcommand{\invertiii}[1]{{\vert\kern-0.25ex\vert\kern-0.25ex\vert #1 
    \vert\kern-0.25ex\vert\kern-0.25ex\vert}}

\begin{document}

\title{Pressure at infinity  and Large Deviations}
\date{\today}

\subjclass[2010]{37D35, 37A10, 37A35, 60F10}

\begin{thanks}
{G.I.\ was partially supported  by Proyecto Fondecyt Regular 1230100. A.V.\ was partially supported  by Proyecto Fondecyt Regular 1250928. We thank Andr\'es Navas for fruitful discussions on the concentration-compactness principle. This work was started during the second author's visit to Pontificia Universidad Cat\'olica de Chile (UC). He wishes to thank UC for the hospitality during the visit.}
\end{thanks}

\author[G.~Iommi]{Godofredo Iommi}
\address{Facultad de Matem\'aticas,
Pontificia Universidad Cat\'olica de Chile (UC), Avenida Vicu\~na Mackenna 4860, Santiago, Chile}
\email{\href{godofredo.iommi@gmail.com}{godofredo.iommi@gmail.com}}
\urladdr{\url{http://www.mat.uc.cl/~giommi/}}

 \author[T.~Jordan]{Thomas Jordan} 
\address{School of Mathematics, Fry Building, University of Bristol, Bristol, BS8 !UG, U.K.}
\email{\href{thomas.jordan@bristol.ac.uk}{thomas.jordan@bristol.ac.uk}}
\urladdr{\url{https://people.maths.bris.ac.uk/~matmj}}
 \author[A.~Velozo]{Anibal Velozo}  \address{Facultad de Matem\'aticas,
Pontificia Universidad Cat\'olica de Chile (UC), Avenida Vicu\~na Mackenna 4860, Santiago, Chile}
\email{\href{apvelozo@uc.cl}{apvelozo@uc.cl}}
\urladdr{\href{https://sites.google.com/view/apvelozo}{https://sites.google.com/view/apvelozo}}

\begin{abstract}
We establish level-1 large deviation principles for continuous time suspension flows over countable Markov shifts, a class of non compact dynamical systems. Our results both quantify deviations of Birkhoff averages from their typical values and, importantly, provide a large deviation principle for orbits that at a finite time are close to escaping to infinity. To overcome the challenges posed by non compactness, we introduce the notion of topological pressure at infinity and prove a variational principle linking it to its measure theoretic counterpart. This framework allows the study of a natural class of potentials, known as strongly positive recurrent, which exhibit robust thermodynamic properties. Moreover, we show that compactly supported ergodic measures are pressure dense in this setting. Applications to pressure gaps and examples are given. 
\end{abstract}

\maketitle

\section{Introduction}

 In dynamical systems, large deviation theory provides a quantitative description of how often time averages of observables differ significantly from their typical values. For a system endowed with an invariant probability measure, Birkhoff’s ergodic theorem ensures that these time averages converge almost surely to the corresponding space average. Large deviation principles refine this result by estimating the exponential rate at which the probability of such deviations decays. In the level-1 setting, the focus is on deviations of Birkhoff averages of a single observable, leading to rate functions that are typically convex and, in many expanding or hyperbolic systems, can be expressed as Legendre transforms of pressure functions arising from thermodynamic formalism.
 
Beginning in the early 1990s, several fundamental large deviation results were established in the context of dynamical systems. In particular, the works of Kifer~\cite{k} and L.S.~Young~\cite{y} proved level-1 large deviation principles for systems exhibiting sufficient hyperbolicity and for observables with adequate regularity. Since then, these results have been extended to include broader classes of systems and functions. 

In this article, we study continuous time dynamical systems defined on non-compact phase spaces, more precisely suspension flows built over countable Markov shifts. 
The lack of compactness of the phase space constitutes a major obstacle to the  application of many of the standard techniques in the area. Over the past few years, countable Markov partitions have been constructed for a broad class of continuous time dynamical systems. In these settings, the symbolic model is given by a suspension flow over a countable Markov shift. A major recent breakthrough, due to Buzzi, Crovisier, and Lima \cite{bcl}, extends earlier work of Lima and Sarig \cite{ls} by providing symbolic dynamics for three-dimensional flows with positive speed. Earlier constructions of Markov partitions for Axiom A flows on compact manifolds were obtained in the 1970s by Bowen \cite{bow} and Ratner \cite{ra}. The results in \cite{bcl} significantly generalize these classical constructions to the non-uniformly hyperbolic setting.

In problems formulated on non-compact spaces, it is often both natural and fruitful to analyze the behaviour of the system \emph{at infinity}.   A classical and highly influential development of this viewpoint was carried out by P.-L. Lions in the mid-1980s in his study of minimization problems on $\R^n$ \cite{l1,l2,l3,l4}. He established the celebrated \emph{concentration–compactness principle}, which describes the possible mechanisms by which convergence of approximate solutions can fail. In particular, it characterizes phenomena such as the escape of mass to infinity that lead to loss of compactness. The principle can be adapted to treat other quantities, such as energy, and its heuristic is outlined in \cite[Section I]{l1}. Numerous applications have since appeared in the theory of PDEs and the calculus of variations.

Related ideas have also appeared in dynamical systems and homogeneous dynamics. 
For instance, in their ergodic-theoretic proof of Duke’s theorem on the equidistribution of closed geodesics on the modular surface, Einsiedler, Lindenstrauss, Michel, and Venkatesh~\cite{elmv} connected the loss of mass of invariant measures to the entropy carried to infinity. 
Further developments along these lines were obtained in~\cite{ek} and~\cite{ekp}.

Our focus lies on thermodynamic formalism for dynamical systems with non-compact phase spaces. 
For countable Markov shifts, several notions designed to quantify the complexity of the dynamics at infinity have been introduced and studied in~\cite{bu,itv,ru1,ru2}. 
In particular, both measure theoretic and topological notions of \emph{entropy at infinity} have been defined and shown to be closely related. 
These ideas have subsequently been extended to the notion of \emph{pressure at infinity}; see~\cite{irv1, rs,v}.  In the continuous time setting, analogous concepts have been developed for geodesic flows on non-compact manifolds, where entropy and pressure at infinity have also been investigated, see for example \cite{irv1, rv, gst,v2}.

Topologies other than the weak$^*$ topology are needed to capture thermodynamic behaviour at infinity. We consider the cylinder topology for both discrete and continuous time systems, see Sections \ref{topologies} and \ref{topo-flow}. In this topology, sequences of probability measures may converge to sub-probability measures, thereby allowing the escape of mass to be quantified. We emphasize that, unlike in the manifold setting, the phase space in this symbolic context does not need to be locally compact. In particular, the space of compactly supported continuous functions may be trivial. In consequence, notions of convergence defined through such test functions, such as vague convergence, are not useful in this setting.

For suspension flows built over countable Markov shifts, the notion of entropy at infinity was introduced and studied in~\cite{irv1}, while the measure-theoretic pressure at infinity was analyzed in~\cite{v,irv2}. In Section~\ref{pres_inf_flow}, we extend the concept of \emph{topological pressure at infinity} for suspension flows to a broader setting. In Theorem~\ref{prin_var_flow_infty}, we relate this notion to its measure-theoretic counterpart and establish a variational principle at infinity. We further investigate several properties of this pressure. In particular, we show that its derivatives can be described in terms of sequences of invariant measures whose total mass escapes the system and which play a role analogous to that of equilibrium measures for the classical pressure (see Proposition~\ref{der.flow}).
We establish a connection between the pressure at infinity and the upper semicontinuity of the pressure map. Specifically, we demonstrate that the pressure at infinity precisely quantifies the extent to which upper semicontinuity fails (see Theorem~\ref{thm:usc_sus}).
We also consider a class of functions, referred to as \emph{strongly positive recurrent} (SPR), for which the pressure at infinity is strictly smaller than the full pressure. This class of functions has unique equilibrium measures that exhibit robust ergodic properties.

For this class of continuous time systems and SPR potentials, we establish a level-1 large deviation principle, see
Theorem~\ref{thm:ldp}. This result extends several large deviation theorems
from the compact setting to the present non-compact symbolic framework, see
for example \cite[Theorems~A and~C]{ekw}. Related questions have also been
studied by different methods, most notably by Takahashi~\cite{ta1,ta2} for
discrete-time countable Markov shifts; see also \cite{kpw}. Further results concerning suspension
flows  appear in \cite{ab}.

Let $(Y,\Phi)$ be a suspension flow over a countable Markov shift. Theorem~\ref{thm:ldp} has two complementary components.  The first concern
ordinary deviations of Birkhoff averages from their typical values. More
precisely, given an observable $g:Y\to\R$, one studies the exponential asymptotics
of the set of orbit segments for which
\[
   \frac{1}{T}\int_0^T g(\Phi_t y)\,dt
\]
differs from the value prescribed by the equilibrium state. Some care is
needed at this point: even when the base transformation is mixing, the
associated suspension flow need not be mixing. Therefore, the large
deviation statement must be formulated on a suitable family of recurrent
orbit segments. This is the role of the set $R_T(m)$. 

The rate function governing these large deviations is expressed in terms of
pressure. Let $f,g\colon Y\to\mathbb R$ be continuous functions defined on the suspension space $Y$ that induce regular functions, $\Delta_f$ and $\Delta_g$, on the base (see Section \ref{ssec:meas AK} for precise definitions). Assume that the potential
$f$ has zero pressure  and that $g$ is bounded. For $\gamma\in\mathbb R$, we define
\[
   I^{f,g}(\gamma)
   =
   \inf_{q\geq 0} P^\Phi\bigl(f+q(g-\gamma)\bigr),
\]
where $P^\Phi$ denotes the pressure on the flow.  The second component of Theorem~\ref{thm:ldp} is specific to the non-compact
setting. It describes large deviations caused not by atypical behaviour in
the recurrent part of the phase space, but by orbit segments which, on
average and at a suitable scale, escape to infinity. The corresponding rate function is defined by
replacing the usual pressure with the pressure at infinity. Namely, for
$\gamma\in\mathbb R$, we set
\[
   I^{f,g}_\infty(\gamma)
   =
   \inf_{q\geq 0}
   P^\Phi_\infty\bigl(f+q(g-\gamma)\bigr).
\]
Here $P^\Phi_\infty$ denotes the pressure carried by sequences of invariant
measures which lose mass in the cylinder topology (see Section \ref{pres_inf_flow} for details and properties). Accordingly,
$I^{f,g}_\infty(\gamma)$ quantifies the exponential contribution of orbit
segments whose empirical measures escape to infinity while their time
averages remain constrained by the observable $g$. In Section \ref{sec:rate} precise definitions and properties of the rate functions are given.

Theorem~\ref{thm:ldp} therefore separates two different mechanisms for large
deviations: deviations produced inside the recurrent part of the system,
captured by $I^{f,g}$, and deviations produced by escape of mass, captured by
$I^{f,g}_\infty$. 

In order to make the result more precise consider a countable Markov shift $(\Sigma, \sigma)$ defined on the  alphabet $\N$. Let $(Y,\Phi)$ be the associated suspension flow with roof function $\tau:\Sigma\to(0,\infty)$ that is bounded away from zero. Let $Y_0= \{ (x,t)\in \Sigma  \times \R \colon 0 \le t <\tau(x)\}$ be the suspension space without identification of the end points. Fix $m, Q, M \in\N$, set $K_m= \bigcup_{i=1}^m [i]$ and $\widetilde{K}_m=(K_m\times\R)\cap Y_0$. Define $$R_T(m)=\widetilde{K}_m\cap \Phi^{-T} \big(\widetilde{K}_m\big),$$
and 
$$R_T(m;Q,M)=\left\{(x,t)\in R_T(m): \frac{1}{T}\int_0^T \chi_{\widetilde{K}_M}(\Phi_s((x,t)))\,\d s <\frac{1}{Q}\right\},$$
where $\chi_A$ is the characteristic function of $A\subset Y.$ We now state our main result. Precise definitions of the objects involved will be given throughout the text.

\begin{theorem}\label{thm:ldp}

Let $(\Sigma,\sigma)$ be a topologically mixing countable Markov shift and let $\tau:\Sigma\to\R$ be a roof function that is bounded away from zero, has summable variations and satisfies 
$\sup_{x\in[b]}\tau(x)< \infty$ for every $b\in\N$.
Denote by $(Y,\Phi)$ the associated suspension semiflow. 

Let  $f:Y\to\R$ be a continuous and such that $\Delta_f$ has summable variations and $\sup_{x\in[b]}|\Delta_f(x)|< \infty$ for every $b\in\N$. Suppose that $P^\Phi(f)=0$  and that $f$ admits an equilibrium state $\nu_f$. Finally, let $g:Y\to\R$ be continuous and bounded with  $\Delta_g$ of summable variations. 

Then, there exists $m\in\N$ such that:

\begin{enumerate}
\item  Suppose that $\gamma<\beta(g)$. Then,
$$\lim_{T\to\infty}\frac{1}{T}\log\nu_f\left( \bigg\{(x,t)\in R_T(m): \frac{1}{T}\int_0^T g(\Phi_s((x,t))\,d s> \gamma \bigg\}\right)=I^{f,g}(\gamma).$$
If in addition $f$ is SPR, $s_\infty(f)<1$, and $\gamma\in (\int g \,\d\nu_f,\beta(g))$, then  $I^{f,g}(\gamma)<0$. 

\item  Suppose that $\gamma<\beta_\infty(g)$, and that $I_\infty(\gamma)>-\infty$. Let $ Q,M\in \N$. Then,
$$I^{f,g}_\infty(\gamma)\le \liminf_{T\to\infty}\frac{1}{T}\log\nu_f \bigg(\bigg\{(x,t)\in R_T(m; Q,M): \frac{1}{T}\int_0^T g(\Phi_s((x,t))\,\d s> \gamma \bigg\}\bigg),$$
and
$$\limsup_{T\to\infty}\frac{1}{T}\log\nu_f\bigg(\bigg\{(x,t)\in R_T(m; Q,M): \frac{1}{T}\int_0^T g(\Phi_s((x,t))\,\d s> \gamma \bigg\}\bigg) \le I^{f,g}_\infty(\gamma)+\delta(Q,M),$$
where $\lim_{Q,M\to\infty} \delta(Q,M)=0$. 
If in addition $f$ is SPR, and $\gamma<\beta_\infty(g)$, then $I^{f,g}_\infty(\gamma)<0$.

\item Suppose that $\gamma<\beta_\infty(g)$, and that $I^{f,g}_\infty(\gamma)=-\infty$. Then,
$$\lim_{Q,M\to\infty}\limsup_{T\to\infty}\frac{1}{T}\log\nu_f\bigg(\bigg\{(x,t)\in R_T(m; Q,M): \frac{1}{T}\int_0^T g(\Phi_s((x,t))\,\d s> \gamma \bigg\}\bigg)  =-\infty$$

\end{enumerate}
\end{theorem}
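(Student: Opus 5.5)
The strategy is to reduce everything to the base shift $(\Sigma,\sigma)$ via the Abramov--Kac correspondence, treating the two parts separately: part (a) through a Gärtner--Ellis type argument driven by the pressure $P^\Phi$, and parts (b), (c) through the same argument with $P^\Phi$ replaced by $P^\Phi_\infty$ and the variational principle at infinity (Theorem~\ref{prin_var_flow_infty}). First fix $m$ large enough that $K_m$ carries the equilibrium state $\nu_f$ with positive mass. Choose it so that, by topological mixing, every pair of symbols in $\{1,\dots,m\}$ can be connected by admissible words of bounded length. Orbit segments in $R_T(m)$ then correspond to finite admissible words $x_0\dots x_{n-1}$ with $x_0,x_n\le m$ and roof sum $S_n\tau\approx T$. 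The Gibbs property of the lifted equilibrium state on $\Sigma$ (with the summable-variation hypotheses on $\tau,\Delta_f,\Delta_g$, and $\sup_{[b]}$ finiteness giving bounded distortion on each cylinder) expresses $\nu_f$ of such a cylinder, up to a factor $e^{O(1)}$ depending only on $m$, as $\exp(S_n\Delta_f - 0\cdot S_n\tau)$, since $P^\Phi(f)=0$. Hence
\[
\nu_f\Bigl(\Bigl\{(x,t)\in R_T(m): \tfrac1T\textstyle\int_0^T g\circ\Phi_s\,ds>\gamma\Bigr\}\Bigr)\asymp \sum_{\text{words}} e^{S_n\Delta_f},
\]
with the sum over admissible words closed at $K_m$ satisfying $S_n\Delta_g>\gamma S_n\tau$ and $S_n\tau\in[T,T+c]$.

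For the upper bound in (a), apply the exponential Chebyshev inequality. For each $q\ge0$, bound the indicator by $\exp\bigl(q(S_n\Delta_g-\gamma S_n\tau)\bigr)$. Then use the standard estimate that, for loops based at $K_m$, the sum of $\exp S_n(\Delta_f+q(\Delta_g-\gamma\tau)) - sS_n\tau$ over words with $S_n\tau\approx T$ grows like $e^{T(P^\Phi(f+q(g-\gamma))+\epsilon)}$. This is the flow-pressure characterization via $P_\sigma(\Delta_{h}-s\tau)=0$ applied with $h=f+q(g-\gamma)$. Optimizing over $q$ gives $\limsup\le I^{f,g}(\gamma)$. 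For the lower bound, take the infimum attained at some $q^*$, or a limit of $q$. Use $\gamma<\beta(g)$ to obtain, via the variational principle, compactly supported ergodic measures that approximately achieve $P^\Phi(f+q(g-\gamma))$ with $\int g>\gamma$; the pressure density of compactly supported ergodic measures proved in the paper provides these. Count $\mu$-typical words, closed up in $K_m$ using the mixing connectors, to get $\liminf\ge h_\mu+\int f\,d\mu\ge I^{f,g}(\gamma)-\epsilon$. This is the usual minimax identity $\inf_q\sup_\mu=\sup_{\mu:\int g\ge\gamma}$, valid by convexity. Strict negativity under SPR follows because $q\mapsto P^\Phi(f+q(g-\gamma))$ vanishes at $q=0$ and has derivative $\int g\,d\nu_f-\gamma<0$ there. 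This uses differentiability, which SPR and $s_\infty(f)<1$ guarantee, so some $q>0$ gives a negative value.

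For (b), the extra constraint in $R_T(m;Q,M)$, that the orbit spends less than a fraction $1/Q$ of time in $\widetilde K_M$, forces the empirical measure to lose almost all mass in the cylinder topology. In the Chebyshev estimate one therefore only counts words that make long excursions outside $\{1,\dots,M\}$. The restricted pressure of such words is bounded by $P^\Phi_\infty(f+q(g-\gamma))+\delta(Q,M)$, using the definition of topological pressure at infinity and Theorem~\ref{prin_var_flow_infty}. The lower bound uses sequences of invariant measures escaping to infinity, closed up at $K_m$, that realize $P^\Phi_\infty$ with $\int g>\gamma$, where $\gamma<\beta_\infty(g)$. For such measures, the time spent in $\widetilde K_M$ tends to $0$, so they eventually lie in $R_T(m;Q,M)$. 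Negativity under SPR uses $P^\Phi_\infty(f)<P^\Phi(f)=0$ together with continuity in $q$ at $0$. Part (c) is the upper-bound argument of (b) with $I_\infty=-\infty$: choose $q$ with $P^\Phi_\infty$ arbitrarily negative and let $Q,M\to\infty$.

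I expect the main obstacle to be the upper bound in (b). It requires showing that the growth rate of loops confined mostly outside $K_M$, weighted by $f+q(g-\gamma)$, is controlled by $P^\Phi_\infty$ uniformly up to an error $\delta(Q,M)$ that vanishes. My first attempt would be to decompose each word into maximal excursions outside $\{1,\dots,M\}$ and returns, bound the contribution of the returns by the $1/Q$ time fraction times $\sup|f+q(g-\gamma)|$ on $K_M$, and apply the topological definition of pressure at infinity to the excursions.
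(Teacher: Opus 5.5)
Your architecture matches the paper's. For the upper bounds you use weak Gibbs bounds, exponential Chebyshev with $q\ge 0$, and closing each orbit segment into a periodic orbit. For the lower bounds you count $\mu$-typical cylinders of compactly supported ergodic measures, you use the duality formulas of Theorem \ref{RATEVARIATION}, and Theorem \ref{thm:spr_deri} gives negativity.

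The genuine gap is the choice of $m$, which is exactly where the continuous-time difficulty lies. Both conditions you impose hold for every $m$: the equilibrium state charges every cylinder, and in a mixing shift any finitely many symbols are joined by words of bounded length. So your lower-bound argument would apply equally with $m=1$. But your connectors may leave $\{1,\dots,m\}$, and nothing ensures $\Phi_T(x,t)\in\widetilde K_m$ at the exact time $T$ for a non-negligible set of heights $t$. Since the flow need not be mixing, this fails in general.

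Here is an example. Let the loops at $1$ be the concatenations of $1\,2$, $1\,3\,4$ and $1\,n$ for $n\ge 5$; this is a mixing shift, with loop lengths $2$ and $3$. Take the locally constant roof $\tau|_{[1]}=a$, $\tau|_{[2]}=\tau|_{[n]}=p-a$, $\tau|_{[3]}=\tau|_{[4]}=(p-a)/2$, with $p>2a$. Every primitive loop has flow length $p$. Hence if $x_1=1$, then $S_k\tau(x)\in p\Z$ whenever $x_{k+1}=1$, and $\Phi_T(x,t)\in\widetilde K_1$ forces $T+t\in\bigcup_j[jp,jp+a)$. For $t\in[0,a)$ this is impossible when $T \bmod p\in[a,p-a]$. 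So $R_T(1)=\emptyset$ for a positive-density set of $T$, and the $\liminf$ in (a) is $-\infty$ while $I^{f,g}(\gamma)$ is finite, although all hypotheses can be met.

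The missing idea is Lemma \ref{mixingcond}. Fix a loop $1=a_1,\dots,a_n=1$, put $A=\{a_i\}$ and $m=\max A$. Then $\{1,\dots,m\}$ contains a cycle, so arbitrarily long admissible words in $A$ exist. In Claim \ref{claim:1}, each typical word is preceded by a connector starting in $A$. It is followed by a connector into $A$ and then a block of $A$-symbols, of length independent of $T$, chosen so that for every height $t$ the base time $T+t$ falls inside that block. Hence the whole cylinder, at all heights, lies in $R_T(m)$, at the cost of a constant factor in $\nu_f$ by Lemma \ref{weak_gibbs}. Without this step your two-sided relation $\asymp$ holds only as an upper inequality, which is consistent with the paper's remark that the upper bounds hold for every $m$.

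Finally, the step you flag as the main obstacle in (b) is immediate, and no excursion decomposition is needed. The flow pressure at infinity is defined through periodic orbits that spend at most a prescribed small fraction of their period in a fixed $\widetilde K$. Closing $x$ into the periodic point $z_x$ with bounded connectors changes its time fraction in $\widetilde K_M$ by $O(1/T)$. So $z_x$ lies in the restricted set of periodic orbits with fraction $2/Q$, and the bound follows from the definition. Here $\delta(Q,M)$ is the gap between $\inf_{q\ge 0}$ of the restricted pressures and $I^{f,g}_\infty(\gamma)$.
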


Note that the case of averages strictly smaller than $\gamma$ follows by replacing $g$ with $-g$ which yields the corresponding large deviation result. Moreover, the upper bound holds for every $m \in \N$
and therefore we obtain the following.

\begin{corollary}\label{cor:version2}
Consider the assumptions of Theorem \ref{thm:ldp}. Suppose that $f$ is SPR. Then for every $m\in\N$ the following holds:
\begin{enumerate}
\item Assume that $s_\infty(f)<1$. For every $\epsilon>0$ we have that 
\begin{equation*}
\limsup_{T\to\infty}\frac{1}{T}\log\nu_f\left( \bigg\{(x,t)\in R_T(m): \left|\frac{1}{T}\int_0^Tg(\Phi_s(x,t)) \,\d s-\int g \, \d\nu_f \right|> \epsilon \bigg\}\right)<0.
\end{equation*}
\item If $Q$ and $M$ are sufficiently large then
$$\limsup_{T\to\infty}\frac{1}{T}\log \nu_f\left( R_T(m; Q,M) \right)<0.$$
\end{enumerate}
\end{corollary}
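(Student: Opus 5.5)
The plan is to derive both parts from Theorem~\ref{thm:ldp}, using the remark that its upper bounds hold for every $m\in\N$. One bounds the probabilities by the corresponding limsup statements with $\gamma$ chosen so that the rate function is strictly negative.

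\textbf{Part (a).} Since $|a-c|>\epsilon$ means either $a>c+\epsilon$ or $-a>-c+\epsilon$, the set in question is contained in the union of two sets. The first is the set of $(x,t)\in R_T(m)$ with $\frac1T\int_0^T g\circ\Phi_s\,\d s>\int g\,\d\nu_f+\epsilon$. The second is the analogous set for $-g$ with threshold $-\int g\,\d\nu_f+\epsilon$. As $\limsup\frac1T\log(a_T+b_T)=\max(\limsup\frac1T\log a_T,\limsup\frac1T\log b_T)$, it suffices to show each piece has negative exponential rate.

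Take $\gamma=\int g\,\d\nu_f+\epsilon$ for the first piece. If $\gamma\ge \beta(g)$, where $\beta(g)$ is the supremum of $\int g$ over invariant measures (or the relevant upper threshold), the set is empty or has rate $-\infty$. In that case I would choose $\gamma'<\gamma$ with $\gamma'\in(\int g\,\d\nu_f,\beta(g))$ and use monotonicity of the set in $\gamma$. Otherwise $\gamma\in(\int g\,\d\nu_f,\beta(g))$, and the upper bound of Theorem~\ref{thm:ldp}(a), valid for every $m$, gives rate $\le I^{f,g}(\gamma)$. This is strictly negative by the SPR and $s_\infty(f)<1$ clause of (a).

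A subtlety is that if $\beta(g)\le \int g\,\d\nu_f+\epsilon$, one still needs some $\gamma'$ strictly between $\int g\,\d\nu_f$ and $\beta(g)$. If $\beta(g)=\int g\,\d\nu_f$, then $g$ has a constant average along periodic orbits. In that case I would argue the deviation set is empty or of rate $-\infty$ via the upper bound with $\gamma$ slightly above $\beta(g)$, where $I^{f,g}=-\infty$ because $P^\Phi(f+q(g-\gamma))\to-\infty$ as $q\to\infty$. The $-g$ piece is handled identically.

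\textbf{Part (b).} This is the main point. Since $g$ is bounded, pick $\gamma<\min(\beta_\infty(g),\inf g)$, for instance $\gamma=\inf g-1$, provided $\beta_\infty(g)>-\infty$. Then the Birkhoff condition $\frac1T\int_0^T g\circ\Phi_s>\gamma$ holds for every point, so the set in Theorem~\ref{thm:ldp}(b),(c) equals $R_T(m;Q,M)$.

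If $I^{f,g}_\infty(\gamma)>-\infty$, the upper bound in (b) gives
\[
\limsup_{T\to\infty}\tfrac1T\log\nu_f(R_T(m;Q,M))\le I^{f,g}_\infty(\gamma)+\delta(Q,M).
\]
By the SPR clause, $I^{f,g}_\infty(\gamma)<0$, and since $\delta(Q,M)\to0$, the right side is negative for $Q,M$ large. If instead $I^{f,g}_\infty(\gamma)=-\infty$, part (c) gives a limit of $-\infty$ as $Q,M\to\infty$, so the limsup is negative for $Q,M$ large.

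The obstacle is the degenerate situation $\beta_\infty(g)=-\infty$, or the case where no admissible $\gamma$ exists. Then I would simply apply the results with $g\equiv0$: this is bounded, $\Delta_g=0$ has summable variations, and $\beta_\infty(0)=0$, so $\gamma=-1$ is admissible. In fact using $g\equiv 0$ from the outset makes part (b) independent of the given $g$, and I would present it that way. I also need to check that the SPR clause of (b) requires no hypothesis on $s_\infty(f)$, which matches the statement of the corollary.
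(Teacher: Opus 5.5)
Your proposal follows the paper's proof. For (a) the paper applies the upper bound of Theorem~\ref{thm:ldp}(a), valid for every $m$, to $g$ and to $-g$ at $\gamma=\pm\int g\,\d\nu_f+\epsilon$ and uses $I^{f,g}(\gamma)<0$. For (b) it takes $\gamma$ small enough that the Birkhoff constraint is vacuous and uses $I^{f,g}_\infty(\gamma)\le P^\Phi_\infty(f)<0$. Your monotonicity fix when $\int g\,\d\nu_f+\epsilon\ge\beta(g)$, and your appeal to part (c) when $I^{f,g}_\infty(\gamma)=-\infty$, cover cases the paper skips.

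One slip is the fallback $g\equiv 0$. For bounded $g$, $\beta_\infty(g)=-\infty$ happens only when no sequence of invariant measures converges to zero, and then $\beta_\infty(0)=-\infty$ as well, so switching to $g\equiv0$ does not help. In that degenerate case you can argue directly instead. Closing up orbit segments as in the upper-bound proof would otherwise produce periodic measures converging to zero, so $R_T(m;Q,M)=\emptyset$ for $Q,M,T$ large and (b) holds trivially.
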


In the case that $f$ has an equilibrium state but it is not SPR, then the strict negativity of the rate function in part (a) of Theorem \ref{thm:ldp} does not necessarily hold (see Remark \ref{rem:sharp}).

For the precise definition of $s_\infty(f)$ see formula (\ref{s_infty}). We generally assume $s_\infty(f)<1$ to ensure that Theorem \ref{thm:spr_deri} holds. However, this condition can sometimes be relaxed.
For example, if the suspension flow has finite entropy (see \cite[Lemma 4.12]{irv2}), or in the case of the constant roof function where the functions $f,g$ are weakly H\"older (see \cite[Theorem 3.2]{rs} and the remark right above). Furthermore, under the assumption that the base countable Markov shift is the full shift in Theorem \ref{cor:full}, we obtain conditions under which the assumption on $R_T(m)$ can be relaxed. 

Applications and examples of our main theorem are provided in Section \ref{sec:ex}. For instance, we prove that  a strongly positive regular function either has a Markov measure as an equilibrium measure or the free energy restricted to Markov measures is bounded away from the pressure. A version of this result for suspension flows is also given. These results generalize the well known Hausdorff dimension gap theorem obtained by Kifer, Peres and Weiss in the continued fraction setting \cite{kpw}. 

Beyond their role in the proofs of our main results, we establish, as an independent contribution, that compactly supported ergodic measures are \emph{pressure dense} for both countable Markov shifts (Theorem \ref{pressure_density}) and suspension flows (Theorem \ref{pressure_density_susp}). Results of this type, previously obtained in compact settings, have proved useful in a wide range of situations (see, for example, \cite[Theorem B]{ekw} or \cite[Section 9.4]{gk} for entropy density). \\

{\bf Organization of the article.} In Section~\ref{sec:cms}, we review the thermodynamic formalism for
countable Markov shifts, both in its classical form and at infinity, and we
discuss the cylinder topology in this setting. In
Theorem~\ref{pressure_density}, we establish a pressure density result for
compactly supported ergodic measures.
Section~\ref{sec:susp} is devoted to suspension flows over countable Markov
shifts. There we recall the classical thermodynamic formalism for such flows,
study the corresponding cylinder topology, and prove a pressure density
result analogous to the discrete time case, see
Theorem~\ref{pressure_density_susp}. In Section~\ref{pres_inf_flow}, we
introduce the topological pressure at infinity for suspension flows and prove
that it coincides with its measure theoretic counterpart, see
Theorem~\ref{prin_var_flow_infty}. We also study several properties of the
pressure at infinity and the corresponding notion of strong positive
recurrence. We prove some compactness result for the space of invariant sub-probability measures, upper semi-continuity of the pressure and calculate its derivative.
Section~\ref{sec:rate} is devoted to the definition and analysis of the rate
functions, both classical and at infinity, that are used in the large
deviation results. In Section~\ref{sec:ldp}, we prove our main large
deviation theorem, Theorem~\ref{thm:ldp}. Finally, in
Section~\ref{sec:ex}, we apply our results and present illustrative examples. Several
additional results are obtained under the assumption that the base countable
Markov shift of the suspension flow is a full shift, this assumption allows us
to strengthen some of the preceding conclusions. The article concludes with
examples in which the base dynamics do not satisfy the big images and
preimages property.

\section{Countable Markov shifts}   \label{sec:cms}
In this section we not only  review the thermodynamic formalism for countable Markov shifts (CMS) and the associated space of invariant probability and sub-probability measures, but also establish some new results regarding pressure density (see Proposition  \ref{pressure_density}) and the derivatives of the pressure and the pressure at infinity (see Proposition \ref{der}).

Let $M$ be an $\mathbb{N} \times \mathbb{N}$ infinite matrix with entries of either $0$ or $1$. Consider the following associated space:
\begin{equation*}
 \Sigma=\left\{ (x_1, x_2, \dots) \in \N^{\N}: M(x_i,x_{i+1})=1 \text{ for every } i \in \N \right\}.
\end{equation*} 
Endow $\N$ with the discrete topology and $\N^{\N}$ with the product topology and on $\Sigma$ consider the induced topology.   The \emph{shift map} $\sigma:\Sigma \to \Sigma$ is defined by $\sigma(x)=(x_2,x_3,\ldots)$, where $x=(x_1, x_2, \dots ) \in \Sigma$. The dynamical system $(\Sigma,\sigma)$ is called  a  \emph{countable Markov shift} (CMS). Note that the matrix $M$ can be represented by a directed graph whose vertices are labeled by $\mathbb{N}$ and whose edges are determined by the entries of $M$. 
An \emph{admissible word} of length $N$ is a string $a_1a_2\ldots a_{N}$ of letters in $\N$ such that $M(a_i,a_{i+1})=1$ for every $i\in\{1,\ldots,N-1\}$.
We define an initial metric on $\Sigma$ by setting 
$$d({a},{b})=\left(\frac{1}{2}\right)^{|{a}\wedge {b}|}$$
where 
$$|{a}\wedge {b}|=\inf\{n-1:a_n\neq b_n\}.$$
A \emph{cylinder} of length $N$ is a set of the form 
\begin{equation*}
[a_1,\ldots,a_{N}]= \left\{ (x_1,x_2,\ldots)\in \Sigma :  x_i=a_i  \text{ for } 1 \le i \le N \right\}.
\end{equation*} 
Denote by $|\cdot |$ the length of a cylinder, that is, $|[a_1,\ldots,a_{N}]|=N$.
Cylinder sets are balls in the metric $d$ and form a basis for the topology on $\Sigma$. 
The system $(\Sigma,\sigma)$ is \emph{topologically transitive}  if for every $a, b \in \N$, there exists an admissible word that starts with $a$ and ends with $b$.  We say that $(\Sigma,\sigma)$ is \emph{topologically mixing} if for every pair $a, b \in \N$, there exists a number $N(a,b)$ such that for every $n \ge N(a,b)$, there exists an admissible word of length $n$ that starts with $a$ and ends with $b$. For a function $\phi:\Sigma\to\R$ we define the Birkhoff sum $S_n\phi(x)=\sum_{k=0}^{n-1}\phi(\sigma^kx).$

\subsection{Topologies in the space of shift invariant measures} \label{topologies}
Denote by $\M_{\sigma}$ the space of $\sigma$-invariant Borel probability measures on $\Sigma$, and by $\mathcal{M}_{\le 1}(\sigma)$
the space of $\sigma$-invariant Borel sub-probability measures on $\Sigma$. Denote by $\mathrm{C}_b(\Sigma)$ the space of bounded and continuous real functions. 

We consider two distinct notions of convergence for measures. A sequence of measures $(\mu_n)_n$ in  $\M_{\sigma}$ converges weak$^*$ to a measure $\mu$  if for every $\phi \in \mathrm{C}_b(\Sigma)$ we have 
\begin{equation*}
\lim_{n \to \infty} \int \phi \, d \mu_n = \int \phi \, d \mu.
\end{equation*}
This notion of convergence defines a metrizable topology on $\M_{\sigma}$, known as the weak$^*$ topology. Probability measures are preserved by the weak$^*$ topology. If $\Sigma$ is compact, the space $\M_{\sigma}$ is also compact. 
However, if $(\Sigma, \sigma)$ is non-compact and transitive, then $\M_{\sigma}$ is non-compact. 
In \cite{iv} the following notion of convergence of measures was considered. Let $(\mu_n)_n$ and $\mu$ be measures in $\mathcal{M}_{\le 1}(\sigma)$. We say that $(\mu_n)_n$ converges on {cylinders}  to $\mu$ if for every cylinder $C \subseteq \Sigma$ we have
\begin{equation*}
\lim_{n \to \infty}  \mu_n(C)=  \mu(C).
\end{equation*}
This notion of convergence defines a metrizable topology on $\mathcal{M}_{\le 1}(\sigma)$ called the cylinder topology. A sequence of invariant probability measures can converge in the cylinder topology to a sub-probability measure. That is, this topology captures the escape of mass phenomena that naturally occur in non-compact spaces. However, when there is no escape of mass, the cylinder and the weak$^*$ topologies coincide (see \cite[Lemma 3.17]{iv}). It turns out that if $(\Sigma, \sigma)$ satisfies a combinatorial condition (called the $\F-$property, see \cite[Definition 4.9]{iv}) which is fulfilled by every finite entropy CMS and also for every locally compact CMS, then $\mathcal{M}_{\le 1}(\sigma)$ is compact with respect to the cylinder topology (see \cite[Theorem 1.2]{iv}).

\subsection{Pressure for CMS}

Let $(\Sigma,\sigma)$ be a transitive countable Markov shift with alphabet $\N$. The \emph{$n$-th variation} of $\phi:\Sigma \to \R$ is defined by
\[ V_{n}(\phi):= \sup \{| \phi(x)-  \phi(y)| : x,y \in \Sigma, x_{i}=y_{i}, 1 \leq i \leq n \}. \]
We say that $ \phi$ is of \emph{summable variations} if $\sum_{n=2}^{\infty} V_n(\phi)< \infty$. We say that it is  \emph{locally H\"older} (with parameter $\theta$) if there exists $\theta \in (0,1)$ such that for all $n \geq 1$ we have  $V_{n}( \phi) \leq O( \theta^{n}). $ 

Let $a\in \N$. Let $$\text{Per}_a(n)=\{x\in[a]:\sigma^n(x)=x\},$$
be the set of periodic points of order $n$ in $[a]$. The \emph{Gurevich Pressure} of $\phi$ was introduced by Sarig in \cite{sa1}. It is defined by 
\[ P(\phi) = \limsup_{n \rightarrow \infty} \frac{1}{n} \log \sum_{x\in\text{Per}_a(n)}  \exp \left(\sum_{k=0}^{n-1} \phi(\sigma^{k}x)\right).  \]
The value does not depend on the cylinder $[a]$ considered. If $(\Sigma, \sigma)$ is topologically mixing the limit always exists (see \cite{sa1}).

The Gurevich pressure satisfies that
\begin{equation*} \label{eq:press appr}
   P(\phi) = \sup \{ P(\phi|K) : K \subset \Sigma \textrm{ compact and } \sigma\textrm{-invariant} \},
\end{equation*}
where $P(\phi| K)$ is the topological pressure of $\phi$ restricted to the compact set $K$ (for definition and properties see \cite[Chapter 9]{w}). 
Moreover, this notion of pressure satisfies the Variational Principle (see \cite{sa1, ijt}):
\[P(\phi)= \sup \left\{ h_{\mu}(\sigma) + \int \phi \, \d\mu : \mu \in \M_{\sigma} \text{ and } \int  \phi \, \d\mu >- \infty \right\},\]
where  $h_{\mu}(\sigma)$ denotes the entropy of the measure $\mu$ (see  \cite[Chapter 4]{w}). 
If $\phi$ is the zero function we denote $P(0)=h(\sigma)$ and call it the Gurevich entropy of  $\sigma$, see \cite{gu1}. A measure $\mu \in \M_{\sigma}$ attaining the supremum  is called an \emph{equilibrium measure} for $\phi$. It is proved in \cite[Theorem 1.1]{bs} that a potential with summable variations has at most one equilibrium state. Moreover, these are weak Gibbs in the sense of the following Lemma, as established by Sarig in \cite[p.101]{s4} and by Buzzi in \cite[Theorem A.4]{bpp}:

\begin{lemma} \label{weak_gibbs}
Let $(\Sigma, \sigma)$ be a topologically mixing countable Markov shift and $\phi:\Sigma \to \R$ a summable variations function of finite pressure that has an equilibrium measure $\mu$. For every $a,b \in \N$ there exists a constant $C_{ab}>0$ such that for every cylinder of length $n \in \N$ given by $[a, \dots, b]$ and every $x \in [a, \dots, b]$ we have
\begin{equation*}
\frac{1}{C_{ab}}\leq \frac{\mu([a, \dots, b])}{\exp \left( \sum_{i=0}^{n-1}\phi(\sigma^i x) - n P(\phi) \right)}       \leq C_{ab}.
\end{equation*}

\end{lemma}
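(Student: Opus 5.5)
The plan is to pass through Sarig's theory of recurrence and the Ruelle operator. Replacing $\phi$ by $\phi-P(\phi)$, we may assume $P(\phi)=0$.

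\textbf{Step 1: recurrence and the conformal pair.} Since $\phi$ has summable variations, finite pressure and an equilibrium measure $\mu$, the potential $\phi$ is positive recurrent; this is Sarig's characterization of when an equilibrium state exists. The generalized Ruelle–Perron–Frobenius theorem then provides:
\begin{itemize}
\item a positive continuous function $h$ with $L_\phi h=h$;
\item a conformal $\sigma$-finite measure $m$ with $L_\phi^*m=m$;
\item the identification $\mu=h\,m$, with $\int h\,\d m=1$.
\end{itemize}
Moreover, summable variations give the distortion bound
$$|S_n\phi(x)-S_n\phi(y)|\le B:=\sum_{k\ge 2}V_k(\phi)$$
for all $x,y$ in a common $n$-cylinder. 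This holds after an inessential adjustment of the first variation term; since the cylinder is fixed we only need $V_k$ for $k\ge 2$ after the first symbol is fixed, and I will track this carefully.

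\textbf{Step 2: estimate $m$ on cylinders.} By conformality, $m(\sigma^{n}A)$-type relations give
$$m([a,\dots,b])=\int_{\sigma^{n-1}[a,\dots,b]\cap\,\cdots} e^{S_{n-1}\phi(\text{preimage})}\,\d m$$
restricted to $[b]$. Concretely, $m([w])=\int \mathbf 1_{[b]}\,e^{S_{n-1}\phi(w\,y)}\,\d m(y)$, where $y$ ranges over $\sigma[b]$-compatible points, and we then add one more step for the last symbol. Taking $x\in[w]$ and using bounded distortion, we get
$$m([w])\asymp e^{S_n\phi(x)}\, m\big(\sigma([b])\big).$$
The quantity $m(\sigma[b])$ is a positive finite constant depending only on $b$. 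Positivity follows because $m$ is fully supported, and finiteness holds because $m$ is finite on cylinders.

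\textbf{Step 3: convert to $\mu$.} On $[a]$, the function $h$ is bounded above and below by constants depending on $a$. This follows from continuity together with the Harnack-type inequality for $h$ coming from $L_\phi h=h$ and bounded distortion. Since $[w]\subset[a]$, we have
$$\mu([w])=\int_{[w]}h\,\d m\asymp_a m([w]).$$
Combining this with Step 2 yields the two-sided bound with a constant $C_{ab}$ depending only on $a,b$.

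\textbf{Main obstacle.} The main difficulty is Step 1, namely justifying that an equilibrium measure forces positive recurrence and $\mu=hm$ in the summable-variations (not locally Hölder) setting. The same issue arises in making the distortion bound uniform on cylinders that begin with a fixed symbol. For this I would rely on the cited works of Sarig and of Buzzi, \cite[Theorem A.4]{bpp}, which treat exactly this generality.
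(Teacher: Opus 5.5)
Your route is the standard argument behind the references the paper cites instead of a proof: an equilibrium state gives positive recurrence (Buzzi--Sarig), the GRPF theorem gives $\mu=h\,m$, and you combine a conformality estimate on cylinders with a Harnack bound for $h$ on $[a]$. Steps 1 and 3 are fine; the Harnack bound $h(x)\le e^{B}h(y)$ for $x_1=y_1$ uses only $V_k(\phi)$, $k\ge 2$.

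The gap is in Step 2. The ``inessential adjustment of the first variation term'' is in fact essential. Take $x,y$ in a common cylinder $[a,\dots,b]$ of length $n$. The term $i=n-1$ of $S_n\phi(x)-S_n\phi(y)$ compares $\sigma^{n-1}x$ and $\sigma^{n-1}y$, which share only the symbol $b$. So the distortion bound needs the oscillation of $\phi$ on $[b]$, not just $\sum_{k\ge2}V_k(\phi)$; the problem sits at the end of the word, not at the first symbol. Similarly, $\sigma[b]$ is in general an infinite union of $1$-cylinders, so ``$m$ is finite on cylinders'' does not give $m(\sigma[b])<\infty$. What gives it is $m([b])=\int_{\sigma[b]}e^{\phi(by)}\,\d m(y)\ge e^{\inf_{[b]}\phi}\,m(\sigma[b])$, which again needs $\phi$ bounded below on $[b]$.

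This cannot be patched from the stated hypotheses, because the statement as written fails without such control. On the full shift let $\phi(x)=-2\log x_2-\log\zeta(2)$. Then:
\begin{enumerate}
\item $V_k(\phi)=0$ for $k\ge 2$, so $\phi$ has summable variations in the paper's sense.
\item $P(\phi)=0$, since $\phi$ agrees on periodic orbits with $-2\log x_1-\log\zeta(2)$.
\item The Bernoulli measure $\mu$ with weights $k^{-2}/\zeta(2)$ is an equilibrium state.
\end{enumerate}
Yet for $x\in[a,\dots,b]$ of length $n$ the ratio in the lemma equals $x_{n+1}^2/a^2$, which is unbounded on the cylinder. Consistently, the conformal measure is $\d m=\zeta(2)x_1^2\,\d\mu$, with $m([c])=1$ for every $c$, so $m(\sigma[b])=\infty$.

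What your argument does prove, using the $(n-1)$-step form of conformality you wrote first, $m([w])=\int_{[b]}e^{S_{n-1}\phi(w_1\cdots w_{n-1}y)}\,\d m(y)$, is $\mu([a,\dots,b])\asymp_{a,b}\exp\bigl(S_{n-1}\phi(x)-(n-1)P(\phi)\bigr)$, with no first-variation assumption. Your ``one more step for the last symbol'' is exactly where $\sup_{x\in[b]}|\phi(x)|<\infty$ is needed. The paper imposes this hypothesis whenever the lemma is used (e.g.\ on $\Delta_f$ in Theorem~\ref{thm:ldp}). With it, your Steps 2 and 3 go through, with $C_{ab}$ also depending on $\sup_{[b]}|\phi|$.
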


A countable Markov shift $(\Sigma, \sigma)$ is said to satisfy the Big Images and Pre-images property (BIP property) if there exists a finite set $B \subset \N$ such that for every $a \in \N$ there exists $b,b' \in B$ with $bab'$ an admissible word. In the case when the shift is topologically mixing this is equivalent \cite[p.1752]{sa3} to there existing $N\in\N$ such that for all $i,j\in\N$ and $n\geq N$, $M^n(i,j)>0$, which is referred to as $M$ being finitely primitive in \cite{mubook}.  For example, the full shift on $\N$ satisfies the BIP property.  Under this combinatorial assumption we can sum over all periodic points in the definition of Gurevich pressure (see \cite[Corollary 1]{sa3} and \cite{mubook}). That is, if $(\Sigma, \sigma)$ satisfies the BIP property, then

\begin{equation*}
P(\phi) = \lim_{n \rightarrow \infty} \frac{1}{n} \log \sum_{\sigma^{n}x=x}  \exp \left(\sum_{k=0}^{n-1} \phi(\sigma^{k}x)\right).
\end{equation*}
Moreover, Sarig \cite[Theorem 1]{sa3} established that for a topologically mixing CMS, a function $\phi$ with finite first variation, summable variations with finite pressure has a Gibbs measure if and only if the system has the BIP property. Thus, in this setting,  there exists a constant $C>0$ corresponding to the equilibrium measure $\mu$ such that for every  $x \in [a, \dots, b]$ we have
\begin{equation*}
\frac{1}{C}\leq \frac{\mu([a, \dots, b])}{\exp \left( \sum_{i=0}^{n-1}\phi(\sigma^i x) - n P(\phi) \right)}       \leq C.
\end{equation*}
Hence, contrary to what happens in the general case, the constant $C$ does not depend on the symbols $a$ and $b$.

\subsection{Pressure density}
The next result is a modification of \cite[Proposition 4.1]{itv}, where entropy density is established (see also \cite{ta}), which is more suitable for our purposes.

\begin{lemma}\label{prop:predens}
    Let $(\Sigma,\sigma)$ be a transitive countable Markov shift. Let $\mu\in\M_\sigma$ be an invariant measure with finite entropy. Consider $\epsilon>0$, $\eta>0$, and functions $\{\phi_i\}_{i=1}^n$ in $L^1(\mu)$ with summable variations such that $\sup_{x\in [j]} |\phi_i(x)|<\infty,$ $\forall j\in\N$ and $i\in\{1,\ldots,n\}$. Then, there exists an ergodic measure $\mu_e$ with compact support such that $$h_{\mu_e}(\sigma)>h_\mu(\sigma)-\eta\quad\text{and}\quad\bigg|\int \phi_i \, \d\mu-\int \phi_i \, \d\mu_e\bigg|<\epsilon,$$  for every $i\in\{1,\ldots,k\}$.
\end{lemma}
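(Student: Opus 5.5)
Our plan is to reduce to the ergodic case, then approximate by a finite-alphabet subsystem, and finally use an entropy-density/periodic-point construction in the compact subshift, closing up orbits with transitivity. We follow the strategy of \cite[Proposition 4.1]{itv}, keeping track of the integrals of the finitely many $\phi_i$ in addition to the entropy.

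\textbf{Step 1 (ergodic reduction).} Write the ergodic decomposition $\mu=\int \mu_\omega\,\d\rho(\omega)$. Since $h_\mu=\int h_{\mu_\omega}\,\d\rho$ and $\int\phi_i\,\d\mu=\int\!\int\phi_i\,\d\mu_\omega\,\d\rho$ (the $\phi_i$ being in $L^1(\mu)$), we can choose finitely many ergodic components $\mu_1,\dots,\mu_r$ with rational weights $p_j$ such that $\sum p_j h_{\mu_j}>h_\mu-\eta/4$ and $|\sum p_j\int\phi_i\,\d\mu_j-\int\phi_i\,\d\mu|<\epsilon/4$ for all $i$. This uses approximation of the integrals by simple functions of $\omega$ and truncation to a set of components where the relevant quantities are bounded.

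\textbf{Step 2 (Katok-type orbit selection).} For each ergodic $\mu_j$, fix a large finite set of symbols $F=\{1,\dots,N\}$ and use Birkhoff's theorem and Shannon--McMillan--Breiman for the partition into $1$-cylinders. This gives, for large $n$, a collection $\mathcal{W}_j$ of admissible words of length $n$ with the following properties:
\begin{itemize}
\item each word begins and ends in a fixed symbol $a\in F$;
\item each word uses only symbols in some finite set $F'$;
\item $\#\mathcal{W}_j\ge e^{n(h_{\mu_j}-\eta/4)}$;
\item for every $x$ in a word cylinder, $|\frac1n S_n\phi_i(x)-\int\phi_i\,\d\mu_j|<\epsilon/4$.
\end{itemize}
Two points need care. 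First, the requirement that words use only finitely many symbols is compatible with the entropy bound because $h_\mu<\infty$: the partition into $1$-cylinders has finite entropy after coarsening the tail $\{x_1>N\}$, and the tail contributes little to entropy and little to the integrals $\int \phi_i$. Second, the $\phi_i$ are bounded on each $[j]$, so restricting to symbols in $F'$ keeps them uniformly bounded there. Summable variations then make $S_n\phi_i$ nearly constant on $n$-cylinders, up to $\sum_k V_k(\phi_i)$, which is negligible after dividing by $n$.

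\textbf{Step 3 (gluing).} Use transitivity to fix connecting words from $a$ to $a$ of bounded length $L$. Concatenate blocks consisting of $p_j$-proportional numbers of words from each $\mathcal{W}_j$, separated by connectors, to form a compact $\sigma$-invariant subshift $X$ over the finite alphabet. Its topological entropy is at least $\sum p_j h_{\mu_j}-\eta/4$, and every ergodic measure on $X$ has $\phi_i$-averages within $\epsilon/2$ of $\int\phi_i\,\d\mu$, because connectors are a vanishing fraction and the $\phi_i$ are bounded on $X$. Taking $\mu_e$ to be an ergodic measure of maximal entropy on $X$ (via the variational principle on compact $X$) gives $h_{\mu_e}>h_\mu-\eta$ and the integral estimates, with $\operatorname{supp}\mu_e\subset X$ compact.

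The main obstacle is Step 2: producing the finite-alphabet word families with simultaneous control of entropy and of the $L^1$ (possibly unbounded) potentials. Here the summable-variation and bounded-on-cylinders hypotheses, together with the finite entropy of $\mu$, must be used carefully when discarding the tail symbols.
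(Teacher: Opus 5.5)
Your proposal is correct and follows essentially the same route as the paper. Both arguments run through the ergodic decomposition with rational weights, Katok-type word families with Birkhoff control for each ergodic component, transitive gluing into a compact subshift, and the variational principle. The paper cites \cite[Lemma 4.2]{itv} for the word families, whereas you rebuild them via Shannon--McMillan--Breiman for the coarsened partitions $\{[1],\ldots,[N],\{x_1>N\}\}$.

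Summable variations only controls $V_k(\phi_i)$ for $k\ge 2$. So the last-symbol and connector contributions must be bounded by $\sup_{x\in[j]}|\phi_i(x)|$ over the finitely many symbols actually used, as the paper does, rather than by $\sum_k V_k(\phi_i)$. Keep that step explicit.
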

\begin{proof}
We adapt the proof of \cite[Proposition 4.1]{itv}. We only explain the modifications needed because the functions $\phi_i$ are not assumed to be bounded.

Let $\vartheta$ be the ergodic decomposition of $\mu$. Since $h_\mu(\sigma)<\infty$ and $\phi_i\in L^1(\mu)$, we have
$h_\mu(\sigma)=\int h_\nu(\sigma)\, \d\vartheta(\nu),$ and $\int\phi_i \, \d\mu=\int\left(\int\phi_i \, \d\nu\right)\, \d\vartheta(\nu).$
 Moreover, for $\vartheta$-almost every $\nu$, we have $h_\nu(\sigma)<\infty$ and $\phi_i\in L^1(\nu)$ for every $i$. Approximating the integrable vector-valued function
\begin{align*}
\nu\longmapsto
\left(
h_\nu(\sigma),
\int\phi_1\, \d\nu,\ldots,
\int\phi_n\, \d\nu
\right)
\end{align*}
by simple functions and then rationalizing the coefficients, we obtain ergodic measures $\mu_1,\ldots,\mu_N$, allowing repetitions, such that, for $\bar{\mu}=\frac{1}{N}\sum_{r=1}^N\mu_r,$ we have
\begin{align}
h_{\bar{\mu}}(\sigma)
&>
h_\mu(\sigma)-\frac{\eta}{4},
\label{eq:entropy-approximation}\\
\left|
\int\phi_i \, \d\bar{\mu}
-
\int\phi_i \, \d\mu
\right|
&<
\frac{\epsilon}{3},
\label{eq:integral-approximation}
\end{align}
for $i=1,\ldots,n$. 

Choose $m\in\mathbb{N}$ sufficiently large that $\mu_r(K_m)>\frac{3}{4},$ where $K_m=\bigcup_{j=1}^m[j],$ for every $r=1,\ldots,N$. The proof of \cite[Lemma 4.2]{itv} applies to the functions $\phi_i\in L^1(\mu_r)$, since the relevant step only uses Birkhoff's ergodic theorem and continuity is not required. Consequently, given $\alpha,\beta>0$, for every sufficiently large $p$ there are finite sets
$G_r(p)\subset K_m\cap\sigma^{-p}K_m$ such that
\begin{align}
&\#G_r(p)
\geq
\exp(p(h_{\mu_r}(\sigma)-\alpha)),
\label{eq:cardinality}\\
&\left|
\frac{1}{p}S_p\phi_i(x)-\int\phi_i \, \d\mu_r
\right|
<\beta
\label{eq:birkhoff}
\end{align}
for every $x\in G_r(p)$, every $r$, and every $i$. Moreover, different elements of $G_r(p)$ belong to different cylinders of length $p$. We now repeat the concatenation construction in the proof of \cite[Proposition 4.1]{itv}. By transitivity, there are connecting words of uniformly bounded length, say at most $L$, between symbols in $\{1,\ldots,m\}$. Concatenating the words determined by points in
\begin{align*}
G_1(p),G_2(p),\ldots,G_N(p),
G_1(p),\ldots
\end{align*}
and taking the closure of all these concatenations and their shifts, we obtain a compact invariant set $\Psi_p$. It remains to modify the estimate of the functions $\phi_i$ on $\Psi_p$. Set $V_i=\sum_{\ell=2}^{\infty}\operatorname{var}_{\ell}(\phi_i)<\infty.$ If $x\in G_r(p)$ and $z$ begins with the word $x_0\cdots x_p$, then
\begin{align}
\left|
S_p\phi_i(z)-S_p\phi_i(x)
\right|
\leq
\sum_{j=0}^{p-1}
\operatorname{var}_{p+1-j}(\phi_i)
\leq V_i.
\label{eq:distortion}
\end{align}
Let $\mathcal{C}$ be the finite set of symbols appearing in the chosen connecting words and at their endpoints. By the  boundedness assumption, $B_i=\max_{j\in\mathcal{C}}\sup_{x\in[j]}|\phi_i(x)|<\infty.$ Therefore, the contribution of each connecting word, including the terminal symbol of the principal word is bounded by $(L+1)B_i$. Combining \eqref{eq:birkhoff} and \eqref{eq:distortion}, and following the proof of \cite[Proposition 4.1]{itv}, every ergodic measure $\rho$ supported on $\Psi_p$ satisfies
\begin{align}
\left|
\int\phi_i \, \d\rho
-
\int\phi_i \, \d\bar{\mu}
\right|
\leq
\beta
+
\frac{
V_i+(L+1)B_i
+
(L+1)\left|\int\phi_i \, \d\bar{\mu}\right|
}{p}.
\label{eq:integral-horseshoe}
\end{align}
Choose $\beta<\epsilon/3$ and $p$ sufficiently large that the second term on the right-hand side of \eqref{eq:integral-horseshoe} is smaller than $\epsilon/3$ for every $i$. It follows from \eqref{eq:integral-approximation} that every ergodic measure $\rho$ supported on $\Psi_p$ satisfies
\begin{align*}
\left|\int\phi_i \, \d\rho-\int\phi_i \, \d\mu\right|<\epsilon,
\end{align*}
for $i=1,\ldots,n.$ The entropy estimate is unchanged from the proof of \cite[Proposition 4.1]{itv}. Using \eqref{eq:cardinality}, the independent choices of words from the sets $G_r(p)$ give
\begin{align*}
h_{\mathrm{top}}(\sigma|_{\Psi_p})
\geq
\frac{
p\left(h_{\bar{\mu}}(\sigma)-\alpha\right)
}{
p+1+L
}.
\end{align*}
Choose $\alpha>0$ sufficiently small and then increase $p$ if necessary. By \eqref{eq:entropy-approximation},
\begin{align*}
h_{\mathrm{top}}(\sigma|_{\Psi_p})
>
h_\mu(\sigma)-\eta.
\end{align*}
(If $h_\mu(\sigma)<\eta$, this inequality follows directly from the nonnegativity of topological entropy.) By the variational principle on the compact system $(\Psi_p,\sigma)$, there exists an ergodic measure $\mu_e$ supported on $\Psi_p$ such that $h_{\mu_e}(\sigma)>h_\mu(\sigma)-\eta.$ As explained before we have 
$\left|\int\phi_i \, \d\mu-\int\phi_i \, \d\mu_e\right|<\epsilon,$ for $i=1,\ldots,n.$
Finally, $\operatorname{supp}(\mu_e)\subset\Psi_p,$ and $\Psi_p$ is compact. Hence $\mu_e$ has compact support.\end{proof}

As a consequence of this result we obtain that the set of ergodic measures is pressure dense for potentials with summable variations. 

\begin{theorem} \label{pressure_density}
Let $(\Sigma,\sigma)$ be a transitive countable Markov shift. Let $\phi:\Sigma\to\R$ be a potential of summable variations and finite pressure. Assume that $\sup_{x\in[b]}|\phi(x)|< \infty$ for every $b\in\N$. Let $\mu\in\M_{\sigma}$ be an invariant measure with finite entropy. Then, there exists a sequence $(\mu_n)_n$ of ergodic measures with compact support converging to $\mu$ in the weak$^*$ topology  such that $$\lim_{n\to\infty} \left(h_{\mu_n}(\sigma)+\int \phi \,\d\mu_n \right)=h_\mu(\sigma)+\int \phi \, \d\mu.$$
If $\psi:\Sigma\to\R$ is a function of summable variations and $\sup_{x\in [j]} |\psi(x)|<\infty,$  $\forall j\in\N$, we can further assume that $\lim_{n\to\infty}\int \psi \,\d\mu_n=\int \psi \, \d\mu$.
\end{theorem}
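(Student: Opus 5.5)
The plan is to apply Lemma~\ref{prop:predens} repeatedly, with a suitable finite family of test functions, and to extract a diagonal sequence. One point has to be settled first. The statement implicitly requires $\int \phi\,\d\mu$ to be meaningful, and it must be $>-\infty$ for the conclusion to make sense. Since $P(\phi)<\infty$ and $h_\mu(\sigma)<\infty$, the variational principle gives $\int \phi\,\d\mu \le P(\phi)-h_\mu(\sigma)$ whenever the integral is defined. So I treat the case $\phi\in L^1(\mu)$, which means $\int\phi^-\,\d\mu<\infty$, as the main case; this is presumably the intended setting. I would note that if $\int\phi\,\d\mu=-\infty$, the statement should be read as the sequence $h_{\mu_n}(\sigma)+\int\phi\,\d\mu_n$ tending to $-\infty$, and address that by a truncation argument. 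For the optional function $\psi$, I similarly assume $\psi\in L^1(\mu)$.

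\emph{Test functions for weak$^*$ convergence.} Weak$^*$ convergence of probability measures on the Polish space $\Sigma$ is equivalent to convergence of $\mu_n(C)$ for every cylinder $C$, together with tightness. Tightness holds here because the $\mu_n$ will be probability measures and no mass escapes: this is \cite[Lemma 3.17]{iv}, convergence on cylinders to a probability measure implies weak$^*$ convergence. It therefore suffices to approximate the integrals of the characteristic functions $\chi_C$ for the countably many cylinders $C$. Each $\chi_C$ is bounded and locally constant, so it has summable variations, belongs to $L^1(\mu)$, and is bounded on each $[j]$. Enumerate the cylinders as $C_1,C_2,\dots$.

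\emph{Diagonal construction.} For each $n$, apply Lemma~\ref{prop:predens} with $\epsilon=\eta=1/n$ and the finite family $\phi,\psi,\chi_{C_1},\dots,\chi_{C_n}$. All hypotheses are met: $\phi$ and $\psi$ have summable variations, are bounded on each $1$-cylinder, and lie in $L^1(\mu)$. The lemma produces a compactly supported ergodic $\mu_n$ with
\[
h_{\mu_n}(\sigma)>h_\mu(\sigma)-1/n, \qquad \Bigl|\int\phi\,\d\mu_n-\int\phi\,\d\mu\Bigr|<1/n,
\]
the same bound for $\psi$, and $|\mu_n(C_k)-\mu(C_k)|<1/n$ for $k\le n$. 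Then $\mu_n\to\mu$ on cylinders, and since $\mu$ is a probability measure this gives weak$^*$ convergence. It also gives $\int\psi\,\d\mu_n\to\int\psi\,\d\mu$, and
\[
\liminf_{n\to\infty}\Bigl(h_{\mu_n}(\sigma)+\int\phi\,\d\mu_n\Bigr)\ge h_\mu(\sigma)+\int\phi\,\d\mu .
\]

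\emph{Main obstacle: the upper bound.} The lemma controls entropy only from below, so I need to prevent $h_{\mu_n}(\sigma)$ from overshooting $h_\mu(\sigma)$. Entropy is not upper semicontinuous on a non-compact shift, so semicontinuity cannot be used directly. I would resolve this inside the compact invariant set $\Psi_p$ built in the lemma's proof, where the entropy map is continuous on the convex set of invariant measures. The set $\Psi_p$ supports ergodic measures of entropy $>h_\mu(\sigma)-\eta$, and every invariant measure on $\Psi_p$, ergodic or not, satisfies the same integral estimates \eqref{eq:integral-horseshoe}, since these come from orbit-segment bounds. Take an ergodic $\rho_1$ on $\Psi_p$ with $h_{\rho_1}(\sigma)>h_\mu(\sigma)-\eta$, and let $\rho_0$ be a low-entropy ergodic measure on $\Psi_p$, for instance a periodic orbit measure, whose entropy is $0$. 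If $h_{\rho_1}(\sigma)\le h_\mu(\sigma)+\eta$ we take $\mu_n=\rho_1$. Otherwise, the convex combination $t\rho_0+(1-t)\rho_1$ has entropy depending affinely and continuously on $t$, so some $t$ gives entropy exactly within $\eta$ of $h_\mu(\sigma)$; but this combination is not ergodic.

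The ergodicity issue is the delicate point. To fix it, I would apply the variational principle to a compact subsystem of $\Psi_p$, or use the fact that the entropy of ergodic measures on this transitive subshift of finite type-like set is dense in $[0,h_{\mathrm{top}}]$. As a fallback, I would shrink the horseshoe: restrict the concatenation to a sub-family of words from the sets $G_r(p)$ of cardinality about $\exp(p(h_{\bar\mu}(\sigma)))$. This keeps the integral estimates unchanged while forcing $h_{\mathrm{top}}(\sigma|_{\Psi_p})\le h_\mu(\sigma)+\eta$ up to $O(L\log(\#\text{connectors})/p)$. Any ergodic measure on the shrunken set then has entropy within $\eta$ of $h_\mu(\sigma)$, and combined with the diagonal construction above this yields the limit.
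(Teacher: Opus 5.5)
Your diagonal step is exactly what the paper has in mind when it presents the theorem as a consequence of Lemma~\ref{prop:predens}; it gives no further proof. That step is: apply Lemma~\ref{prop:predens} with $\epsilon=\eta=1/n$ to $\phi,\psi,\chi_{C_1},\dots,\chi_{C_n}$, then use \cite[Lemma 3.17]{iv} to pass from cylinder convergence to weak$^*$ convergence. It yields weak$^*$ convergence, $\int\psi\,\d\mu_n\to\int\psi\,\d\mu$, and the $\liminf$ inequality.

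You are right that the lemma does not give the matching upper bound, and here you take a different route. In the paper's framework that bound comes from upper semicontinuity of the free energy when no mass escapes. Since $\mu_n\to\mu$ on cylinders with $\lambda=1$, $P(\phi)<\infty$, and $\int\phi\,\d\mu_n\to\int\phi\,\d\mu>-\infty$, \cite[Theorem 1.3]{v} (the discrete counterpart of Theorem~\ref{thm:usc_sus}) gives $\limsup_n\bigl(h_{\mu_n}(\sigma)+\int\phi\,\d\mu_n\bigr)\le h_\mu(\sigma)+\int\phi\,\d\mu$ in one line. This is presumably what the finite-pressure hypothesis is for.

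Your fallback, thinning the sets $G_r(p)$ to cap $h_{\mathrm{top}}(\sigma|_{\Psi_p})$, also works and is more self-contained. It never uses $P(\phi)<\infty$, and it gives $h_{\mu_n}(\sigma)\to h_\mu(\sigma)$ directly, i.e.\ genuine entropy density even on shifts of infinite entropy. To make it airtight:
\begin{enumerate}
\item[(i)] Thin each $G_r(p)$ separately to about $\exp(p(h_{\mu_r}(\sigma)-\alpha))$ words, so that one full cycle contributes about $\exp(pN\,h_{\bar\mu}(\sigma))$ choices. A single $G_r(p)$ need not contain $\exp(p\,h_{\bar\mu}(\sigma))$ words.
\item[(ii)] Record that the simple-function approximation in the lemma actually gives the two-sided estimate $|h_{\bar\mu}(\sigma)-h_\mu(\sigma)|<\eta/4$, not just \eqref{eq:entropy-approximation}. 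Without this, capping $h_{\mathrm{top}}$ near $h_{\bar\mu}(\sigma)$ says nothing about $h_\mu(\sigma)$.
\item[(iii)] Bound $h_{\mathrm{top}}(\sigma|_{\Psi_p})$ by counting words, each determined by a starting phase and a string of principal words. Points of the closure contribute no new words.
\end{enumerate}

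The convex-combination detour and the appeal to density of ergodic entropies can be dropped. Incidentally, entropy on the invariant measures of $\Psi_p$ is only upper semicontinuous, not continuous.
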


\subsection{Pressure at infinity} We now recall the definition and basic properties of the pressure at infinity as in \cite{v}. Fix $a, q,m\in\N$ and define $$\text{Per}_a(n;q,m)= \bigg\{x \in\ \text{Per}_a(n): \# \big\{k \in \{ 1, \dots , n\} : x_k \leq q\big\} \leq \frac{n}{m} \bigg\}.$$ For a  function $\phi :\Sigma \to \R$ of summable variations define
\begin{equation*}
    Z_n(\phi,a;q,m)=\sum_{x \in \text{Per}_a(n;q,m)} \exp\left( \sum_{i=0}^{n-1}\phi(\sigma^i x) \right).
\end{equation*}
If $\text{Per}_a(n;q,m)= \emptyset$, we set $ Z_n(\phi,a;q,m)=0$. Let
\[ P_{\infty}^{top}(\phi,a;q,m) = \limsup_{n \to \infty} \frac{1}{n}\log Z_n(\phi,a;q,m). \]
The \emph{topological pressure at infinity of $\phi$} is defined by
\[P_{\infty}^{top}(\phi)= \inf_q \inf_m P_{\infty}^{top}(\phi,a;q,m).\]
As observed in \cite[Remark 3.1]{v} this quantity is independent of the choice of $a \in \N$. 

The \emph{measure theoretic pressure at infinity for $\phi$} is defined by
\[ P_{\infty}(\phi)= \sup_{(\mu_n) \to 0} \limsup_{n \to \infty} \left(h_{\mu_n}(\sigma) + \int \phi \, \d \mu_n \right),\]
where the supremum is taken over all sequences $(\mu_n)$ in $\M_{\sigma}$ such that for every $n \in \N$ we have $\int \phi \, \d \mu_n>-\infty$, which converges to zero in the cylinder topology. It was established in \cite[Theorem 1.2]{v} that if $\phi$ is a function of summable variations and $(\Sigma,\sigma)$ a transitive CMS then $P_{\infty}(\phi)=P_{\infty}^{top}(\phi)$. We will, therefore, refer to this quantity as the \emph{pressure at infinity for $\phi$}. If $\phi$ is the zero function then we refer to $P_{\infty}(\phi)$ as the \emph{entropy at infinity} and we will denote it by $h_{\infty}(\sigma)$. This quantity was defined and studied  in \cite{bu, itv}.

In \cite{sa2}, Sarig introduced the class of strongly positive recurrent potentials (referred to as SPR). It turns out that the thermodynamic formalism of SPR functions is very well behaved. Indeed, it  is similar to that of H\"older potentials in subshifts of finite type.  In \cite[Theorem 8.2]{rs} it is shown that a weakly H\"older function $\phi$ is SPR if and only if $P_{\infty}(\phi)<P(\phi)$. The same holds for functions of summable variations with the caveat that the definition of SPR requires regularity of the induced function and, for summable variation functions,  that needs to be assumed. In this case, the function $\phi$ admits a Ruelle–Perron–Frobenius measure $\mu_\phi$, which is a probability measure with strong ergodic properties (see \cite{sa2}). Moreover, if $\int \phi \, \d\mu_\phi>-\infty$, then $\mu_\phi$ is the unique equilibrium state of $\phi$.

\subsection{Derivatives of the pressure and the pressure at infinity}

Let $Q: \R \to \R$ be a convex function. A \emph{support line} for $Q$ at $t=0$ is an affine function of the form $Q(0) +Dt$ with the property that for every $t \in \R$ we have
\begin{equation*}
Q(t) \geq Q(0) +Dt.
\end{equation*}
The \emph{subgradient} of $Q$ at $t=0$, that we denote by $\partial Q(0)$, is the set of all slopes $D \in \R$ of support lines of $Q$ at $t=0$.  A convex function $Q: \R \to \R$ has right and left derivatives. In particular, $$\partial Q(0)= [ Q'_-(0), Q'_+(0)],$$ where $ Q'_-(0), Q'_+(0)$ denote the left and right derivatives of $Q(t)$ at $t=0$, respectively. Hence $Q$ is differentiable at $t=0$ if and only if the cardinality of $\partial Q(0)$ is one  (see \cite[Theorem 25.1]{ro}). Moreover, a convex function on $\R$ is non-differentiable at at most countably many points.

\begin{lemma}
\label{lem:local-convex}
Let $J\subset \R$ be an open interval such that $0 \in J$, and let $q_n,q:J\to \R$ be convex functions. Assume that each $q_n$ is differentiable and that $q_n\to q$ pointwise on $J$.
Then, for every $D\in\partial q(0)$, there exist a subsequence, still denoted by $(q_n)_n$, and a sequence $t_n\to0$ such that $q_n'(t_n)\rightarrow D.$ Moreover, $q_n(t_n)-t_nq_n'(t_n)\rightarrow q(0)$.
\end{lemma}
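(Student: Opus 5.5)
The plan is to reduce the statement to the classical fact that pointwise convergence of convex functions controls their one-sided derivatives, and then to pick the points $t_n$ by an intermediate-value argument. Since $J$ is open and contains $0$, I fix $\delta>0$ with $[-2\delta,2\delta]\subset J$. For convex functions the difference quotients are monotone: for $0<h\le\delta$,
\[
\frac{q_n(0)-q_n(-h)}{h}\le q_n'(-h')\le q_n'(h')\le \frac{q_n(h)-q_n(0)}{h}\quad\text{whenever } h'\le h .
\]
Letting $n\to\infty$, the left and right difference quotients of $q_n$ converge to those of $q$. Then letting $h\downarrow 0$ gives
\[
\limsup_{n} q_n'(h)\le \frac{q(2h)-q(0)}{2h}\to q'_+(0),\qquad \liminf_n q_n'(-h)\ge q'_-(0),
\]
in the form
\[
\limsup_{h\downarrow0}\limsup_n q_n'(h)\le q'_+(0),\qquad \liminf_{h\downarrow0}\liminf_n q_n'(-h)\ge q'_-(0).
\]
Reversing the monotonicity in the same way yields $\liminf_n q_n'(h)\ge \frac{q(h)-q(0)}{h}\ge q'_+(0)$ for $h>0$, and $\limsup_n q_n'(-h)\le q'_-(0)$. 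I will also use that pointwise convergence of convex functions on an open interval is locally uniform and gives locally uniform bounds on $q_n'$ (Rockafellar, Theorem 10.8 and the chain above). In particular $\sup_n\sup_{|t|\le\delta}|q_n'(t)|<\infty$.

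Next, fix $D\in[q'_-(0),q'_+(0)]$ and $k\in\N$. From the estimates above I choose $h_k<1/k$ and then $N_k$, increasing in $k$, such that for $n\ge N_k$ we have $q_n'(-h_k)<D+1/k$ and $q_n'(h_k)>D-1/k$. Since $q_n'$ is nondecreasing, and in fact continuous, because a differentiable convex function is $C^1$, the intermediate value theorem gives $t_n\in[-h_k,h_k]$ with $|q_n'(t_n)-D|\le 1/k$. To handle the endpoints, I clip: if $q_n'(-h_k)\ge D$ I take $t_n=-h_k$, and if $q_n'(h_k)\le D$ I take $t_n=h_k$; otherwise I use continuity. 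Setting $t_n$ this way for $N_k\le n<N_{k+1}$ produces $t_n\to0$ and $q_n'(t_n)\to D$ along the whole sequence. I expect this to mean the subsequence is not even needed, though passing to one is harmless.

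Finally, $q_n(t_n)\to q(0)$ follows from local uniform convergence $q_n\to q$ on $[-\delta,\delta]$ together with continuity of $q$ at $0$. Because $q_n'(t_n)$ is bounded and $t_n\to0$, the term $t_nq_n'(t_n)\to0$, which gives $q_n(t_n)-t_nq_n'(t_n)\to q(0)$.

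The main obstacle is the first step, which is the careful derivative comparison, namely that limits of $q_n'(\pm h)$ bracket $\partial q(0)$ as $h\to0$. This is handled by the three-slope inequality applied at points $0<h<2h$, followed by the limits in the order $n\to\infty$ first and then $h\to0$.
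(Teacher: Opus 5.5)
Your proof is correct and follows essentially the paper's route. For each $k$ and all large $n$, you bracket $D$ up to $1/k$ between values of $q_n'$ at $\pm h_k$; the paper does the same with the secant slopes $a_{n,k},b_{n,k}$ over $[-\delta_k,0]$ and $[0,\delta_k]$, which lie between those derivative values. Both arguments then use the intermediate value property of $q_n'$ and locally uniform convergence. Your block choice $N_k\le n<N_{k+1}$ correctly shows that no subsequence is needed (the same holds for the paper's argument), and you make explicit the step $t_nq_n'(t_n)\to0$, which the paper leaves implicit.

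One slip: your first displayed chain runs the wrong way. For $q_n(t)=t^2$ and $h'=h$ it reads $-h\le -2h$. Consequently the bound $\limsup_n q_n'(h)\le \frac{q(2h)-q(0)}{2h}$ is not valid as written; the correct estimate is $q_n'(h)\le \frac{q_n(2h)-q_n(h)}{h}$. This direction is never actually used, though. Choosing $h_k$ and $N_k$ only requires $q_n'(-h)\le \frac{q_n(0)-q_n(-h)}{h}$ and $q_n'(h)\ge \frac{q_n(h)-q_n(0)}{h}$, which you state correctly in the ``reversed'' sentence. So the step you single out as the main obstacle is not on the critical path and can simply be deleted, along with the uniform bound on $q_n'$ (boundedness of $q_n'(t_n)$ already follows from $q_n'(t_n)\to D$).
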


\begin{proof}
This is a minor refinement  of the argument in \cite[Proposition~2.5(i)]{fe}. Choose $\delta_k\downarrow0$ with $[-\delta_k,\delta_k]\subset J$, and set
\[
    a_{n,k}:=
    \frac{q_n(0)-q_n(-\delta_k)}{\delta_k},
    \qquad
    b_{n,k}:=
    \frac{q_n(\delta_k)-q_n(0)}{\delta_k}.
\]
For fixed \(k\), these numbers converge respectively to
\[
    a_k:=
    \frac{q(0)-q(-\delta_k)}{\delta_k},
    \qquad
    b_k:=
    \frac{q(\delta_k)-q(0)}{\delta_k}.
\]
Since  $D\in\partial q(0)$, we have $a_k\leq D\leq b_k$. Choose $n_k$ large enough so that the distance from $D$ to $[a_{n_k,k},b_{n_k,k}]$ is smaller than $1/k$, and let $d_k$
be the point of this interval closest to $D$. By the mean value theorem and the intermediate value property of derivatives, there exists $t_k\in[-\delta_k,\delta_k]$ such that
$q_{n_k}'(t_k)=d_k.$ Thus $t_k\to0$ and $q_{n_k}'(t_k)\to D$. Since pointwise convergence of finite convex functions on an open interval is locally uniform, $q_{n_k}(t_k)\to q(0)$, which proves the last assertion after relabeling the subsequence.
\end{proof}

\begin{proposition}
\label{der}
Let $(\Sigma,\sigma)$ be a transitive countable Markov shift, and let $\phi,\psi:\Sigma\to\R$ be functions of summable
variations such that, for every  $j\in\N$, we have $\sup_{x\in[j]}|\phi(x)|<\infty$ and  $\sup_{x\in[j]}|\psi(x)|<\infty$. Set $Q(t):=P(\phi+t\psi)$ and  $Q_\infty(t):=P_\infty(\phi+t\psi)$.
\begin{enumerate}
\item[(a)]
Assume that $Q$ is finite on an open interval containing $0$. Then, for $D\in\R$, there exists a sequence $(\mu_n)_n\subset\mathcal M_\sigma$ such that
\[
    h_{\mu_n}(\sigma)+\int\phi\, \d\mu_n  \rightarrow P(\phi)  \quad\text{and}\quad \int\psi\, \d\mu_n \rightarrow D
\]
if and only if $D\in\partial Q(0)$. Moreover, if $\mu$ is an equilibrium state for $\phi$ and $\psi\in L^1(\mu)$, then $\int\psi\,\d\mu\in\partial Q(0)$.

\item[(b)]
Assume that $Q_\infty$ is finite on an open interval containing $0$. Then, for $D\in\R$, there exists a sequence $(\mu_n)_n\subset\mathcal M_\sigma$ such that
$\mu_n\rightarrow0$ in the cylinder topology,
\[
    h_{\mu_n}(\sigma)+\int\phi\,\d\mu_n \rightarrow P_\infty(\phi), \qquad \int\psi\,\d\mu_n\rightarrow D,
\]
if and only if $D\in\partial Q_\infty(0)$.
\end{enumerate}
In both parts, the measures $\mu_n$ may be chosen ergodic and compactly supported.
\end{proposition}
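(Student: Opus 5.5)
For both parts I would use the variational characterizations of $P$ and $P_\infty$ together with Lemma \ref{lem:local-convex}, applied to convex functions built from suitable measures. Throughout I would also use Theorem \ref{pressure_density} to replace arbitrary measures by ergodic, compactly supported ones.

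\emph{Necessity in (a).} Suppose $\mu_n$ satisfies $h_{\mu_n}(\sigma)+\int\phi\,\d\mu_n\to P(\phi)$ and $\int\psi\,\d\mu_n\to D$. For every $t$ in the interval where $Q$ is finite, the variational principle gives
\[
Q(t)\ge h_{\mu_n}(\sigma)+\int\phi\,\d\mu_n+t\int\psi\,\d\mu_n.
\]
Letting $n\to\infty$ yields $Q(t)\ge Q(0)+tD$ on that interval. Since $Q$ is convex and finite near $0$, this local support inequality forces $D\in[Q'_-(0),Q'_+(0)]=\partial Q(0)$. The integrability needed for the variational principle is automatic once the $\mu_n$ are compactly supported, because of the assumptions $\sup_{[j]}|\phi|,\sup_{[j]}|\psi|<\infty$. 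The final claim follows the same way: if $\mu$ is an equilibrium state and $\psi\in L^1(\mu)$, then $Q(t)\ge Q(0)+t\int\psi\,\d\mu$, so $\int\psi\,\d\mu\in\partial Q(0)$.

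\emph{Sufficiency in (a).} Write $P(\phi+t\psi)=\sup_K P((\phi+t\psi)|K)$ over compact invariant $K$. I would take an increasing exhausting sequence of compact invariant subshifts $K_n=\Sigma_n$: the symbols $\{1,\dots,n\}$ together with enough connecting symbols to make each $K_n$ a transitive subshift of finite type. Set $q_n(t)=P((\phi+t\psi)|K_n)$. Then $q_n\uparrow Q$ pointwise, since the Gurevich pressure is approximated by finite transitive subsystems. The obstacle is that $q_n$ need not be differentiable when $\phi,\psi$ only have summable variations. I would handle this in one of two ways. The first option is to apply the argument of Lemma \ref{lem:local-convex} with one-sided derivatives, choosing an equilibrium measure $\mu_n$ on $K_n$ for $\phi+t_n\psi$ with $\int\psi\,\d\mu_n\in\partial q_n(t_n)$, which can be done by the compact-case theory. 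The second option is to restrict $t$ to the co-countable set of differentiability points, which is enough since the mean-value step only needs one-sided slopes. This produces $\mu_n$ with $\int\psi\,\d\mu_n\to D$ and $h_{\mu_n}(\sigma)+\int\phi\,\d\mu_n=q_n(t_n)-t_n\int\psi\,\d\mu_n\to Q(0)$. Finally, Theorem \ref{pressure_density} (or Lemma \ref{prop:predens}) replaces each $\mu_n$ by an ergodic compactly supported measure, at arbitrarily small cost in both quantities.

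\emph{Part (b).} Necessity is the same computation, using the definition of $P_\infty(\phi+t\psi)$ as a $\limsup$ along sequences tending to $0$ in the cylinder topology: such a sequence is admissible for every $t$. For sufficiency, I would define
\[
q_k(t)=\sup\Bigl\{h_\mu(\sigma)+\int(\phi+t\psi)\,\d\mu : \mu\in\M_\sigma,\ \mu(K_k)<1/k\Bigr\}.
\]
Each $q_k$ is convex, being a supremum of affine functions of $t$. The sequence decreases in $k$, and its limit should be $Q_\infty(t)$ by a diagonal argument using the topological description $\inf_q\inf_m$, i.e. \cite[Theorem 1.2]{v}. Verifying that $\lim_k q_k=Q_\infty$ pointwise is, in my view, the main obstacle, and I would try to establish it first.

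With that in hand, I would again apply a slope argument as in Lemma \ref{lem:local-convex} to get $t_k\to0$ and near-maximizing measures $\mu_k$ with $\mu_k(K_k)<1/k$, whose $\psi$-integrals tend to $D$ and whose free energy at $t=0$ tends to $Q_\infty(0)$. The condition $\mu_k(K_k)<1/k$ gives $\mu_k\to0$ in the cylinder topology. One point to check is that pressure density preserves smallness of $\mu(K_k)$. This holds if $\chi_{K_k}$ is included among the approximated functions in Lemma \ref{prop:predens}, which is legitimate because $\chi_{K_k}$ is locally constant.
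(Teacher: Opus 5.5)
Your necessity arguments in both parts and your sufficiency argument in (a) follow the paper. The differentiability worry in (a) does not arise. On a compact transitive subshift of finite type, $\phi+t\psi$ is bounded with summable variations, so it has the Bowen property and a unique equilibrium state $\mu_{n,t}$. Hence $q_n$ is differentiable with $q_n'(t)=\int\psi\,\d\mu_{n,t}$, which is what the paper uses. Your second fallback would fail anyway: derivatives at points of differentiability can skip a whole interval of slopes, as $|t|$ does at $0$.

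The gap is in the sufficiency part of (b), at the step where you ``apply a slope argument as in Lemma~\ref{lem:local-convex}'' to get near-maximizers $\mu_k$ of $q_k$ with $\int\psi\,\d\mu_k\to D$. That lemma needs each $q_k$ to be differentiable. In (a) it produces measures only because of the identity $q_n'(t)=\int\psi\,\d\mu_{n,t}$ for an actual equilibrium state. Neither is available for your $q_k$: the supremum over $\{\mu(K_k)<1/k\}$ is in general not attained, and $q_k$ may have corners. Nothing in the proposal ties a number $d_k\in\partial q_k(t_k)$ to the $\psi$-integral of an admissible near-maximizer. Two ingredients are missing.
\begin{enumerate}
\item If $\mu$ is admissible and $h_\mu(\sigma)+\int(\phi+t\psi)\,\d\mu\geq q_k(t)-\eta$, comparing with $q_k(t\pm r)$ gives
\[
\frac{q_k(t)-q_k(t-r)-\eta}{r}\leq\int\psi\,\d\mu\leq\frac{q_k(t+r)-q_k(t)+\eta}{r}.
\]
With $\eta/r\to0$, near-maximizers taken at points approaching $t$ from the right or left have slopes tending to $(q_k)'_+(t)$ or $(q_k)'_-(t)$. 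You cannot prescribe an interior point of $\partial q_k(t)$ this way.
\item Interior points are reached by using that the admissible set is convex and the free energy is affine. That is, you take convex combinations of near-maximizers realizing the two one-sided slopes.
\end{enumerate}
This is exactly the paper's proof, done directly on $Q_\infty$, which makes your auxiliary $q_k$ unnecessary. The paper takes escaping near-maximizing sequences at $\pm\varepsilon_n$ with $r_n=\varepsilon_n/2$ and $\eta_n/r_n\to0$, giving $\mu_n^\pm$ that realize $D_\pm$. It then forms $\theta\mu_n^-+(1-\theta)\mu_n^+$, and finally restores ergodicity via Lemma~\ref{prop:predens} with $\phi,\psi,\chi_{C_1},\dots,\chi_{C_n}$.

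The step you single out as the main obstacle, $q_k\to Q_\infty$, is immediate from the measure-theoretic definition and needs no topological description. Every escaping sequence eventually satisfies $\mu_n(K_k)<1/k$, so $q_k\geq Q_\infty$. Conversely, near-maximizers of $q_k$ satisfy $\mu_k([j])<1/k$ for $k\geq j$, so they escape, and therefore $\lim_k q_k\leq Q_\infty$.
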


\begin{proof}
Set $\mathcal F_\phi(\mu):= h_\mu(\sigma)+\int\phi\,\d\mu$. If $\mathcal F_\phi(\mu_n)\to P(\phi)$ and $\int\psi\,\d\mu_n\to D$, then, for every $t$ for which $Q(t)$
is finite, we have $Q(t)    \geq \mathcal F_\phi(\mu_n)+t\int\psi\,\d\mu_n.$  Passing to the limit gives $Q(t)\geq Q(0)+tD$, and hence $D\in\partial Q(0)$. The same argument, using the definition of pressure at infinity, proves the corresponding implication in part~(b). If $\mu$ is an equilibrium state for $\phi$ and $\psi\in L^1(\mu)$, then
\[
    Q(t)\geq h_\mu(\sigma)+\int(\phi+t\psi)\,\d\mu = Q(0)+t\int\psi\,\d\mu,
\]
so $\int\psi\,\d\mu\in\partial Q(0)$.

We prove the converse in part~(a). By compact approximation of the Gurevich pressure, there exists a sequence of compact transitive subshifts of finite type $(\Sigma_n,\sigma)$ such that $Q_n(t):=P_{\Sigma_n}(\phi+t\psi) \rightarrow Q(t)$ pointwise on an open interval containing $0$. Each $Q_n$ is differentiable. If $\mu_{n,t}$ is the equilibrium state of
$\phi+t\psi$ on $\Sigma_n$, then $Q_n'(t)=\int\psi\,\d\mu_{n,t}.$ Let $D\in\partial Q(0)$. By Lemma~\ref{lem:local-convex}, after passing to a subsequence there
exist $t_n\to0$ such that $Q_n'(t_n)\to D$. Set $\mu_n:=\mu_{n,t_n}$. Then each $\mu_n$ is ergodic and compactly
supported, and $\int\psi\,\d\mu_n=Q_n'(t_n)\rightarrow D$. Moreover,  $\mathcal F_\phi(\mu_n)= Q_n(t_n)-t_nQ_n'(t_n)  \rightarrow Q(0)=P(\phi).$
This proves part~(a).

We now prove the converse in part~(b). We first observe that, in the definition of $P_\infty(\phi+t\psi)$, one may restrict to sequences
of ergodic, compactly supported measures. Indeed, apply Theorem ~\ref{pressure_density} to the $n$-th term of an escaping sequence, controlling $\phi+t\psi$ and the first $n$ cylinders. The resulting sequence still converges to zero and has no smaller limsup of the free energies. Set $D_-:=(Q_\infty)'_-(0)$ and $D_+:=(Q_\infty)'_+(0)$.
We first construct a sequence corresponding to $D_+$. Choose $\varepsilon_n\downarrow0$, put $r_n:=\varepsilon_n/2$, and choose $\eta_n>0$ such that $\eta_n/r_n\to0$. For each $n$, choose an escaping sequence $(\rho_{n,k})_k$ of ergodic, compactly supported measures and pass to a subsequence in $k$ so that
\[
    \mathcal F_\phi(\rho_{n,k})     +\varepsilon_n\int\psi\,d\rho_{n,k} \rightarrow L_n,
\]
where $Q_\infty(\varepsilon_n)-\eta_n     \leq L_n\leq Q_\infty(\varepsilon_n)$. Comparing the same sequence at the parameters $\varepsilon_n-r_n$ and $\varepsilon_n+r_n$, and passing to a
further subsequence, we may assume that $\int\psi\,\d\rho_{n,k}\to d_n^+$, where
\[
 \frac{Q_\infty(\varepsilon_n) -Q_\infty(\varepsilon_n-r_n)-\eta_n}{r_n} \leq d_n^+ \leq
 \frac{Q_\infty(\varepsilon_n+r_n)  -Q_\infty(\varepsilon_n)+\eta_n}{r_n}.
\]
Convexity and $\eta_n/r_n\to0$ imply that $d_n^+\to D_+$. A diagonal choice therefore gives ergodic, compactly supported
measures $\mu_n^+\to0$ such that
\[
    \int\psi\,\d\mu_n^+\longrightarrow D_+,\qquad \mathcal F_\phi(\mu_n^+)\longrightarrow Q_\infty(0).
\]
Applying the same argument at $-\varepsilon_n$ gives a sequence $\mu_n^-\to0$ satisfying
\[
    \int\psi\, \d\mu_n^-\longrightarrow D_-,\qquad  \mathcal F_\phi(\mu_n^-)\rightarrow Q_\infty(0).
\]
Let $D\in\partial Q_\infty(0)=[D_-,D_+]$, and choose $\theta\in[0,1]$ such that $D=\theta D_-+(1-\theta)D_+.$ Set
$\lambda_n:=\theta\mu_n^-+(1-\theta)\mu_n^+.$ Since entropy is affine on invariant measures,
\[
    \lambda_n\to0, \qquad   \mathcal F_\phi(\lambda_n)\to Q_\infty(0), \qquad \int\psi\,\d\lambda_n\to D.
\]
It remains only to recover ergodicity. Enumerate the cylinders of $\Sigma$ as $(C_j)_{j\geq1}$. Apply Theorem ~\ref{pressure_density} to $\lambda_n$, controlling
$\phi,\psi,    \chi_{C_1},\ldots,\chi_{C_n},$ with entropy loss and integral errors smaller than $1/n$. We obtain an ergodic, compactly supported measure $\widehat\mu_n$
such that $\widehat\mu_n\to0 , \int\psi\,\d\widehat\mu_n\to D$, and
\[
    \liminf_{n\to\infty}  \mathcal F_\phi(\widehat\mu_n) \geq Q_\infty(0).
\]
The reverse limsup inequality follows from the definition of $P_\infty(\phi)$, since $\widehat\mu_n\to0$. Hence
\[
    \mathcal F_\phi(\widehat\mu_n) \rightarrow Q_\infty(0)=P_\infty(\phi).
\]
This proves part~(b).
\end{proof}

\begin{example}[Random walk] \label{rw}

Consider the CMS defined by the graph in $\N$ such that $n \mapsto n$ and for $n >1$ we also have that $n \mapsto n+1$  for $n \geq 1$ and $n \mapsto n-1$ for $n >1$. This is a topologically mixing CMS of finite entropy that does not have a measure of maximal entropy. Let $a, b \in \R$ be two negative numbers. Consider a function $\phi:\Sigma \to \R$ be defined by
\begin{equation*}
\phi(x)=
\begin{cases}
a & \text{ if  } x_1 \text{ is odd};\\
b & \text{ if  } x_1 \text{ is even}
    \end{cases}
\end{equation*}
This function, being bounded, has finite pressure. Moreover, it does not have an equilibrium measure. Indeed, if $\mu$ was an equilibrium measure for $\phi$ the normalization of the restriction of $\mu$ to CMS obtained by deleting the vertices $\{1, 2\}$ is also an equilibrium measure (since these two countable Markov shifts are conjugated). But there is at most one equilibrium measure for functions of summable variations \cite[Theorem 1.1]{bs}. Therefore, $\phi$ is not strongly positive recurrent and by \cite{rs, v} we have $P(\phi)=P_{\infty}(\phi)$. The same argument holds for $t\phi$ with $t \in \R$. Hence, for every $t \in \R$ we have $P(t\phi)=P_{\infty}(t \phi)$. Therefore, the subgradients of the pressure and that of the pressure at infinity coincide.

\end{example}

\section{Suspension flows over countable Markov shifts} \label{sec:susp}
In this section we recall basic definitions and properties of suspension flows defined over countable Markov shifts. We  discuss topologies in the space of invariant measures that make the space of invariant sub-probabilities compact. We also prove that  ergodic invariant probability measures of compact support are pressure dense.

Let $(\Sigma, \sigma)$ be a transitive countable Markov shift and $\tau \colon \Sigma \to \R^+$ be a positive continuous function such that for every  $x\in\Sigma$ we have
\begin{equation} \label{eq:Hopf_cond}
\sum_{i=0}^{\infty}\tau(\sigma^i x)=\infty.
\end{equation}
Let
\begin{equation*}\label{eq:flow phase }
Y=\{ (x,t)\in \Sigma  \times \R \colon 0 \le t \le\tau(x)\}/\sim
\end{equation*}
where $(x,\tau(x))\sim (\sigma(x),0)$ for
each $x\in \Sigma $. The \emph{suspension semi-flow} over $\sigma$ with \emph{roof function} $\tau$ is the semi-flow $\Phi = (
\Phi_t)_{t \ge 0}$ on $Y$ defined by
\[
 \Phi_t(x,s)= (x,
s+t) \ \text{whenever $s+t\in[0,\tau(x)]$.}
\]
Note that condition (\ref{eq:Hopf_cond}) guarantees that the flow is complete; that is, $\Phi_t:Y\to Y$ is well defined for all $t \ge 0$.

\subsection{Invariant measures}  \label{ssec:meas AK}
There is a close relation between invariant measures for the flow and invariant measures for the base map.
A probability measure $\nu$  on $Y$  is
\emph{$\Phi$-invariant} if $\nu(\Phi_t^{-1}A)= \mu(A)$ for every
$t \ge 0$ and every measurable set $A \subset Y$. Denote by $\M_\Phi$ the space of $\Phi$-invariant probability
measures on $Y$. Let,
\begin{equation}
\M_\sigma(\tau):= \left\{ \mu \in \mathcal{M}_{\sigma}: \int \tau \, \d \mu < \infty \right\}.
\end{equation}
 Denote by $\text{Leb}$  the one-dimensional Lebesgue measure and $\mu \in \M_\sigma(\tau)$, then  
\begin{equation} \label{susp_meas}
\frac{(\mu \times \text{Leb})|_{Y} }{(\mu \times \text{Leb})(Y)} \in \M_{\Phi}.
\end{equation}

If $(\Sigma , \sigma)$ is a countable Markov shift and $\tau:\Sigma \to \R$ is uniformly bounded away from zero, then equation \eqref{susp_meas} induces a bijection between $\mathcal{M}_{\sigma}(\tau)$ and $\M_{\Phi}$. Note, however, that if  $\tau:\Sigma \to \R$ is  not  bounded away from zero then it is possible for an infinite $\sigma$-invariant measure $\mu$ to have $\int  \tau \, \d\mu <\infty$. In this case the measure $(\mu \times \text{Leb})|_{Y} /(\mu \times \text{Leb})(Y) \in \M_\Phi$.

Given a continuous function $g \colon Y\to\R$ we define the function
$\Delta_g\colon\Sigma\to\R$~by
\[
\Delta_g(x)=\int_{0}^{\tau(x)} g(x,t) \,   \d t.
\]
The function $\Delta_g$ is also continuous, moreover if $\nu=\frac{(\mu \times \text{Leb})|_{Y} }{(\mu \times \text{Leb})(Y)} \in \M_{\Phi},$ then
\begin{equation*} \label{eq:rela}
\int_{Y} g \, \d \nu= \frac{\int_\Sigma \Delta_g\,\d
\mu}{\int_\Sigma\tau \, \d \mu}.
\end{equation*}

\begin{remark}\label{rem:ex} Given a continuous function $\phi:\Sigma\to\R$, there exists a continuous function $g:Y\to\R$ such that $\Delta_g=\phi$ (see \cite{brw}). For instance, consider $$g(x,t)=\frac{\phi(x)}{\tau(x)}\psi'\bigg(\frac{t}{\tau(x)}\bigg),$$ for every $x\in \Sigma$ and $t\in [0,\tau(x)]$, where $\psi:[0,1]\to[0,1]$ is any nondecreasing $C^1$ function such that $\psi(0)=0$, $\psi(1)=1$, and $\psi'(0)=\psi'(1)=0$. We can assume, in particular, that 
$$\frac{1}{2}\frac{\phi(x)}{\tau(x)}\le |g(x,t)|\le 2\frac{\phi(x)}{\tau(x)}.$$
\end{remark}

The entropy of a flow with respect to an invariant measure $\nu$,  denoted $h_{\nu}(\Phi)$, is defined as  the entropy of the corresponding time one map. The following  classical result obtained by Abramov \cite{a} relates the entropy of a measure for the flow with the entropy of the corresponding measure for the base map: let $\nu \in \M_{\Phi}$ be such that  $\nu=(\mu \times \text{Leb})|_{Y} /(\mu \times \text{Leb})(Y)$, where $\mu \in \M_{\sigma}$, then
\begin{equation}
h_{\nu}(\Phi)=\frac{h_{\mu}(\sigma)}{\int \tau \, \d \mu}.
\end{equation}
\label{prop:Abr}

\subsection{Topologies in the space of flow invariant measures} \label{topo-flow}
As in section \ref{topologies} it is possible to define the weak* topology in $\M_{\Phi}$. Indeed, denote by $C_b(Y)$ the space of bounded and continuous functions on $Y$. A sequence of measures $(\nu_n)_n$ in $\M_{\Phi}$ converges in the weak* topology to $\nu \in \M_{\Phi}$ if for every $f \in C_b(Y)$  we have $\lim_{n \to \infty} \int f \, \d \nu_n = \int f \, \d \nu$. This notion of convergence induces a metrizable topology on $\M_{\Phi}$. 
Again, if $\Sigma$ is compact then so is $\M_{\Phi}$.

In \cite[Section 6]{iv} the notion of cylinder topology was extended to suspension flows.  Let  
 $\M_{\le 1}(\Phi)$ be the   space of invariant sub-probability measures on $Y$. Let $(\nu_n)_n$ and $\nu$ be measures in $\M_{\le 1}(\Phi)$.
We say that $(\nu_n)_n$ \emph{converges on cylinders} to $\nu$ if $$\lim_{n\to\infty}\nu_n(C\times [a,b])=\nu(C\times[a,b]),$$
for every cylinder $C\subset \Sigma$ and $a,b\in\R$. This notion of convergence induces a metrizable topology on $\M_{\le 1}(\Phi)$. We call this topology the \emph{cylinder topology}. It coincides with the weak* topology when there is no escape of mass (see \cite[Lemma 6.7]{iv}).

\begin{remark}\label{rem:topsus}
    Let $\nu$ and $(\nu_n)_n$ be invariant probability measures for the suspension flow with $\nu_n$  the normalization of $(\mu_n \times \text{Leb})$ and $\nu$ that of $(\mu \times \text{Leb})$. The following statements are equivalent (see \cite[Lemma 8.1.1]{iv}):
\begin{enumerate}
\item The sequence $(\nu_n)_n$ converges on cylinders to $\lambda \nu$, where $\lambda\in [0,1]$. 
\item The following limit holds $$\lim_{n\to\infty}\frac{\mu_n(C)}{\int \tau \d\mu_n}=\lambda\frac{\mu(C)}{\int \tau \d\mu},$$
for every cylinder $C\subset \Sigma$.
\end{enumerate}
Assume $\tau$ is bounded away from zero. Note that if $\lim_{n\to\infty}\int \tau \d\mu_n=\infty$, or $(\mu_n)_n$ converges to the zero measure, then the sequence $(\nu_n)$ converges to the zero measure. Otherwise, we have that $(\mu_n)_n$ converges on cylinders to $\lambda_1\mu$ and $\lim_{n\to\infty}\int \tau \d\mu_n=\lambda_2\int \tau \d\mu$, where $\lambda=\lambda_1/\lambda_2$, up to subsequences.
\end{remark}

\subsection{Pressure for suspension flows}
In this section we recall the thermodynamic formalism of suspension flows over countable Markov shifts as developed in \cite{bi1,ijt, jkl,ke,sav} among others.

Let $(\Sigma, \sigma)$ be a  transitive CMS, and let $\tau: \Sigma \to (0,\infty)$ be a function of summable variations which is bounded away from zero. Denote by  $(Y, \Phi)$ the corresponding suspension flow.

Let $a\in\N$ be a symbol in the alphabet.  For $T_1\le T_2$ we define 
$$\mathrm{FPer}_a(T_1,T_2)=\{(x;s)\in[a]\times [T_1,T_2]:\Phi_s(x,0)=(x,0)\}.$$
The set $\text{FPer}_a(T_1,T_2)$ corresponds to the collection of periodic orbits starting in $[a] \times \{0\}$ whose periods lie in the interval $[T_1,T_2]$.

Let $g:Y \to \R$ be a continuous function such that $\Delta_g:\Sigma \to \R$ is of summable variations. The \emph{pressure of $g$} is defined by
\begin{equation*}
    P^{\Phi}(g)=\limsup_{T \to \infty} \frac{1}{T} \log \left(\sum_{(x;s)\in \mathrm{FPer}_a(T-d_1,T+d_2)} \exp\left(\int_0^{s} g(\Phi_t(x,0)) \, \d t \right) \right). 
\end{equation*}

This definition is independent of $a\in\N$, $d_1>0$ and $d_2\ge 0$. Moreover, if $(\Sigma,\sigma)$ is topologically mixing, then the limsup is in fact a limit. This notion of pressure satisfies the following properties:
\begin{eqnarray*}
P^{\Phi}(g)
&=& \inf\{t \in \R : P (\Delta_g - t \tau) \leq 0\} =\sup \{t \in \R : P(\Delta_g - t \tau) \geq 0\} \\
&=& \sup \left\{ h_{\nu}(\Phi) +\int g \, \d \nu : \nu\in
\mathcal{M}_{\Phi} \text{ and } \int_Y g \,  \d\nu >-\infty \right\}\\
&=& \sup \{ P(g|K) : K\in \cK \},
\end{eqnarray*}
where $\cK$ is the set of all compact  $\Phi$-invariant sets (see \cite{jkl}). A measure $\nu \in \M_{\Phi}$ such that 
$P^{\Phi}(g)= h_{\nu}(\Phi) +\int g \, \d \nu$ is an \emph{equilibrium measure for $g$}.  If $g=0$ is the zero function then we denote $h(\Phi):=P^{\Phi}(0)$ and call it the \emph{entropy} of $\Phi$. The equilibrium measure for the zero potential is called a \emph{measure of maximal entropy}.

\subsection{Pressure density for suspension flows}
We now establish a version of Theorem \ref{pressure_density} for suspension flows over countable Markov shifts.

\begin{theorem} \label{pressure_density_susp}
Let $(\Sigma,\sigma)$ be a transitive countable Markov shift, and $\tau:\Sigma\to\R$ a roof function bounded away from zero with summable variations and such that $\sup_{x\in[b]}\tau(x)< \infty$ for every $b\in\N$. Let $(Y,\Phi)$ be the associated suspension flow. Let $\nu\in\M_\Phi$ be an invariant measure with finite entropy. Let $g:Y\to\R$ be a bounded continuous function such that $\Delta_g$ has summable variations and $\sup_{x\in[b]}|\Delta_g(x)|< \infty$ for every $b\in\N$. Then, there exists a sequence $(\nu_n)_n$ of ergodic measures with compact support converging to $\nu$ in the weak$^*$ topology  such that $$\lim_{n\to\infty} \left(h_{\nu_n}(\Phi)+\int g \, \d\nu_n \right)=h_\nu(\Phi)+\int g \, \d\nu.$$
\end{theorem}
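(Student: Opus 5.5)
The plan is to reduce the flow statement to the base via the correspondence $\nu \leftrightarrow \mu$ and Abramov's formula, apply Lemma~\ref{prop:predens} on $(\Sigma,\sigma)$, and lift back.

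\textbf{Step 1: pass to the base.} Since $\tau$ is bounded away from zero, write $\nu=(\mu\times\mathrm{Leb})|_Y/(\mu\times\mathrm{Leb})(Y)$ with $\mu\in\M_\sigma(\tau)$. Then $\int\tau\,\d\mu<\infty$, so $\tau\in L^1(\mu)$. By Abramov, $h_\mu(\sigma)=h_\nu(\Phi)\int\tau\,\d\mu<\infty$. Since $g$ is bounded, $|\Delta_g|\le\|g\|_\infty\tau$, so $\Delta_g\in L^1(\mu)$. Moreover
\[
h_\nu(\Phi)+\int g\,\d\nu=\frac{h_\mu(\sigma)+\int\Delta_g\,\d\mu}{\int\tau\,\d\mu}.
\]

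\textbf{Step 2: approximate on the base.} To get weak$^*$ convergence on $Y$, control countably many test quantities. Enumerate the cylinders $(C_j)_j$ of $\Sigma$. For each $n$, apply Lemma~\ref{prop:predens} to $\mu$ with $\eta=\epsilon=1/n$ and the functions
\[
\tau,\ \Delta_g,\ \chi_{C_1},\dots,\chi_{C_n},\ \tau\chi_{C_1},\dots,\tau\chi_{C_n}.
\]
All of these have summable variations and are bounded on each $[b]$; the functions $\tau\chi_{C_j}$ qualify because $\tau$ is bounded on $[b]$ and the indicator has variations vanishing beyond the cylinder length. This yields ergodic compactly supported $\mu_n$ with:
\begin{itemize}
\item $h_{\mu_n}(\sigma)>h_\mu(\sigma)-1/n$;
\item $\int\tau\,\d\mu_n\to\int\tau\,\d\mu$ and $\int\Delta_g\,\d\mu_n\to\int\Delta_g\,\d\mu$;
\item $\mu_n(C_j)\to\mu(C_j)$ for every $j$.
\end{itemize}
Because $\mu_n$ and $\mu$ are probability measures, convergence on all cylinders implies weak$^*$ convergence in $\M_\sigma$; this is \cite[Lemma 3.17]{iv}, since there is no loss of mass.

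For the upper bound on entropy, use that upper semicontinuity fails in general. Instead, note that the construction in Lemma~\ref{prop:predens} only gives a lower bound. Since we merely need convergence of the free energy, argue directly: $\liminf_n h_{\mu_n}(\sigma)\ge h_\mu(\sigma)$ from the lemma. For the reverse inequality, rerun the lemma's proof, where $\mu_n$ is chosen on the horseshoe $\Psi_p$ via the variational principle. Choose $\mu_n$ instead as an ergodic measure on $\Psi_p$ whose entropy lies in $(h_\mu-\eta,\,h_\mu+\eta)$. This is possible since the entropy map on ergodic measures of the compact system $\Psi_p$ takes values filling $[0,h_{top}(\Psi_p)]$ densely: convex combinations can be approximated by ergodic measures, using entropy density on the mixing SFT $\Psi_p$.

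\textbf{Step 3: lift back to the flow.} Let $\nu_n$ be the lifts of the $\mu_n$. They are ergodic, and they are compactly supported because $\tau$ is bounded on the finitely many symbols occurring in $\operatorname{supp}\mu_n$. The displayed formula together with the convergences of Step 2 gives convergence of the free energies.

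For weak$^*$ convergence, Remark~\ref{rem:topsus} with $\lambda=1$ applies: $\mu_n(C)/\int\tau\,\d\mu_n\to\mu(C)/\int\tau\,\d\mu$, so $\nu_n\to\nu$ on cylinders. The limit has full mass, so there is no escape of mass, and by \cite[Lemma 6.7]{iv} cylinder convergence coincides with weak$^*$ convergence.

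\textbf{Main obstacle.} The hard part is the entropy upper bound $\limsup h_{\mu_n}\le h_\mu$ in Step 2. Entropy is not upper semicontinuous here, and Lemma~\ref{prop:predens} gives only a lower bound. My first attempt would be the horseshoe modification described above: select an ergodic measure on $\Psi_p$ with entropy close to $h_\mu$ that still has the controlled Birkhoff averages. The difficulty is that all ergodic measures on $\Psi_p$ are shown to have the right integrals, but the intermediate-entropy selection must preserve this. As a fallback, I would first reduce to the case where $\mu$ itself has the maximal free energy on a compact piece.
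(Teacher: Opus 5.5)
Your proposal is correct and follows the same architecture as the paper's proof:
\begin{enumerate}
\item pass to the base measure $\mu$;
\item approximate $\mu$ by compactly supported ergodic $\mu_n$ with $h_{\mu_n}(\sigma)\to h_\mu(\sigma)$, $\int\tau\,\d\mu_n\to\int\tau\,\d\mu$ and $\int\Delta_g\,\d\mu_n\to\int\Delta_g\,\d\mu$;
\item lift back and conclude with Abramov and Kac.
\end{enumerate}
The difference is the base-level input. The paper invokes Theorem~\ref{pressure_density}, which already asserts two-sided convergence of free energies on the base. So the entropy upper bound you single out as the main obstacle is absorbed into that citation, and weak$^*$ convergence of the lifts is left implicit. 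You start instead from Lemma~\ref{prop:predens}, which only gives $h_{\mu_e}(\sigma)>h_\mu(\sigma)-\eta$, so you have to reopen the horseshoe.

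Your fix works, and the worry in your last paragraph is unfounded. The proof of Lemma~\ref{prop:predens} establishes \eqref{eq:integral-horseshoe} for \emph{every} ergodic measure supported on $\Psi_p$. Hence any ergodic measure on $\Psi_p$ with entropy in $(h_\mu(\sigma)-\eta,h_\mu(\sigma)+\eta)$ automatically has all the controlled integrals, cylinder indicators included.

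What needs correcting is your justification that such a measure exists, because nothing shows $\Psi_p$ is a mixing SFT. It is a transitive sofic shift, presented by a finite graph encoding the cyclic block structure with its connecting words. That is enough: passing to an irreducible finite-to-one SFT cover shows that entropies of ergodic measures on $\Psi_p$ are dense in $[0,h_{top}(\sigma|_{\Psi_p})]$.

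A more elementary alternative avoids this. Trim each $G_r(p)$ to cardinality about $e^{p(h_{\mu_r}(\sigma)-\alpha)}$. Counting block sequences then bounds $h_{top}(\sigma|_{\Psi_p})$ above by roughly $h_{\bar\mu}(\sigma)$. In the simple-function approximation step you can arrange $|h_{\bar\mu}(\sigma)-h_\mu(\sigma)|<\eta/4$, not just the one-sided bound. Then an ergodic measure of maximal entropy on the compact subshift $\Psi_p$ does the job.

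Your route buys robustness. Lemma~\ref{prop:predens} only needs $\tau,\Delta_g,\chi_{C_j}\in L^1(\mu)$, which you verify directly from $\int\tau\,\d\mu<\infty$ and $|\Delta_g|\le\|g\|_\infty\tau$. Theorem~\ref{pressure_density}, by contrast, is stated for a single potential of finite Gurevich pressure plus one extra observable. Under the present hypotheses none of $0$, $\tau$, $\Delta_g$ need have finite pressure; think of the full shift on $\N$ with $\tau\equiv1$. So deducing from it the simultaneous convergence of $h_{\mu_n}$, $\int\tau$ and $\int\Delta_g$ takes some unstated care. You also make the weak$^*$ convergence $\nu_n\to\nu$ explicit through Remark~\ref{rem:topsus} and \cite[Lemma 6.7]{iv}. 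The paper's version is shorter only because it treats the base approximation as a black box.
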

\begin{proof}
Let $\mu$ be the measure on $\Sigma$ associated to $\nu$. Applying Theorem \ref{pressure_density} to the functions $\tau$ and $\Delta_g$ we obtain a sequence of ergodic $\sigma$-invariant measures $(\mu_n)_n$ that converges to $\mu$ and such that $h_{\mu_n}(\sigma)\to h_\mu(\sigma)$, $\int \tau \, \d\mu_n\to\int \tau \, \d\mu$, $\int \Delta_g \, \d\mu_n\to\int \Delta_g \, \d\mu$. Set $\nu_n$ as the invariant measure on $Y$ associated to $\mu_n$. The desired limit follows by Kac's and Abramov's formula.\end{proof}

\section{Pressure at infinity for suspension flows} \label{pres_inf_flow}
Let $(\Sigma,\sigma)$ be a  transitive countable Markov shift, and let $\tau:\Sigma\to (0,\infty)$ be a function of summable variations bounded away from zero. Consider the associated suspension flow $(Y,\Phi)$. In this section, we define and establish a formula for the topological pressure at infinity of potentials over $(Y,\Phi)$, and relate it to the measure-theoretic pressure at infinity. From both topological and measure-theoretic perspectives, these quantities measure the degree of disorder of the system in neighborhoods of infinity. These notions extend the concept of entropy at infinity for suspension semi-flows over countable Markov shifts, previously introduced in \cite{irv1}.

Define $$Y_0= \{ (x,t)\in \Sigma  \times \R \colon 0 \le t <\tau(x)\}.$$ 
Let $a\in\N$ be a symbol in the alphabet. 
Fix $\tau_0 \in (0,\inf \tau)$ and  $Q,M\in\N$. Define  $K_Q= \bigcup_{i=1}^Q [i]$ , $\widetilde{K}_Q=(K_Q\times\R)\cap Y_0$ and $\tau_Q=\sup\{\tau(x):x_1\leq Q\}$. Note that to a periodic point $y=(x,0)\in Y$ we associate a periodic measure $\nu_x\in\M(\Phi)$. Define
$$\mathrm{FPer}_a(T_1,T_2;Q,M)=\bigg\{(x;s)\in \mathrm{FPer}_a(T_1,T_2): \nu_x(\widetilde{K}_Q)\le \frac{1}{M}\bigg\}.$$
Note that if $(x;s)\in\mathrm{FPer}_a(T_1,T_2;Q,M)$, then $(x;s)\in \mathrm{FPer}_a(T_1,T_2)$ and \begin{align}\label{ineq:QM}
    S_n\chi_{K_Q}(x) \le \frac{1}{\tau_0 M}S_n\tau(x),
\end{align}
where $n\in\N$ satisfies that $S_n\tau(x)=s$.

\begin{definition} Let $g:Y\to\R$ be a continuous function such that $\Delta_g$ has summable variations. Let $Q,M\in \N$. Define $$Z_{T_1,T_2}(g,a;Q,M)=\sum_{(x;s)\in  \mathrm{FPer}_a(T_1,T_2;Q,M)}\exp\left(\int_0^{s} g(\Phi_t(x,0)) \, \d t\right).$$
If $\mathrm{FPer}_a(T_1,T_2;Q,M)$ is empty, we set $Z_{T_1,T_2}(g,a;Q,M)=0$. Let $d>0$. Define
\begin{align*}P_\infty^\Phi(g,a; Q,M)&=\limsup_{T\to\infty}\frac{1}{T}\log Z_{T-d,T}(g,a;Q,M),\quad\text{and}\quad P_\infty^\Phi(g,a)=\inf_M\inf_Q P_\infty^\Phi(g,a; Q,M).
\end{align*}
\end{definition}

Since the value $P_\infty^\Phi(g,a)$ is independent of $a$ and $d$ (see Lemma \ref{lema_var} below), we refer to it as the \emph{pressure at infinity} of $g$, denoted by $P_\infty^\Phi(g)$. If $g \equiv 0$, we call this quantity the \emph{entropy at infinity} and denote it by $h_{\infty}(\Phi)$.

\subsection{Variational principle at infinity}
We now prove a variational principle for the pressure at infinity. The relevant measures are sequences converging to the zero measure in the cylinder topology. Indeed, we will prove that under suitable assumptions:
$$P_\infty^\Phi(g)=\sup_{(\nu_n)_n\to 0}\limsup_{n\to\infty} \left( h_{\nu_n}(\Phi)+\int g \,\d\nu_n \right).$$ 
The following result establishes a formula for the pressure at infinity which in particular proves that the pressure at infinity is independent of the symbol $a\in\N$. This is a continuous version of \cite[Proposition 3.2]{v}.

\begin{lemma} \label{lema_var} Let $(\Sigma,\sigma)$ be a transitive countable Markov shift, and $\tau:\Sigma\to\R$ a roof function bounded away from zero and summable variations. 
Let $(Y,\Phi)$ be the associated suspension flow  and  $g:Y\to\R$ be a continuous function such that $\Delta_g$ has summable variations. Let $V:Y\to\R$ be a continuous function such that $\Delta_V=\sum_{i\in\N} \frac{1}{i} \chi_{[i]}$. Then, 
$$P_\infty^\Phi(g,a)=\lim_{t\to \infty}P^\Phi(g-tV),$$
for every $a\in\N$.
    \end{lemma}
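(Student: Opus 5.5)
We prove two inequalities. Note first that $t\mapsto P^\Phi(g-tV)$ is non-increasing, since $V\ge 0$ in the sense that $\Delta_V\ge0$. Hence the limit $L:=\lim_{t\to\infty}P^\Phi(g-tV)$ exists in $[-\infty,\infty]$. The key point is that for a periodic orbit $(x;s)$ with $S_n\tau(x)=s$ we have
\[
\int_0^s V(\Phi_u(x,0))\,\d u=S_n\Delta_V(x)=\sum_{k=0}^{n-1}\frac{1}{x_{k+1}} .
\]
This sum is large if the orbit visits small symbols often, and it is at most $\frac{1}{Q}\#\{k: x_{k+1}>Q\}+\#\{k:x_{k+1}\le Q\}$ otherwise.

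\textbf{Upper bound $P_\infty^\Phi(g,a)\le L$.} Fix $t>0$ and $Q,M$. For $(x;s)\in\mathrm{FPer}_a(T-d,T;Q,M)$ we have $S_n\Delta_V(x)\le S_n\chi_{K_Q}(x)+\frac{n}{Q}$. Using \eqref{ineq:QM} together with $n\le s/\tau_0\le T/\tau_0$, this gives
\[
S_n\Delta_V(x)\le \frac{T}{\tau_0}\Big(\frac1{\tau_0 M}+\frac1Q\Big)=:T\epsilon(Q,M).
\]
Therefore
\[
Z_{T-d,T}(g,a;Q,M)\le e^{tT\epsilon(Q,M)}\sum_{\mathrm{FPer}_a(T-d,T)}\exp\Big(\int_0^s (g-tV)\Big).
\]
Taking $\limsup\frac1T\log$ and using that the definition of $P^\Phi$ is independent of $d_1,d_2$, we get $P_\infty^\Phi(g,a;Q,M)\le P^\Phi(g-tV)+t\epsilon(Q,M)$. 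Now take the infimum over $Q,M$, which sends $\epsilon\to 0$ for fixed $t$, and then let $t\to\infty$.

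\textbf{Lower bound $L\le P_\infty^\Phi(g,a)$.} Fix $Q,M$ and $t$. We split the sum defining $P^\Phi(g-tV)$, over $\mathrm{FPer}_a(T-d,T)$, into two parts: orbits in $\mathrm{FPer}_a(T-d,T;Q,M)$, and the remaining orbits, for which $\nu_x(\widetilde K_Q)>1/M$.

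For the first part, since $V$ has $\Delta_V\ge0$, its contribution is at most $Z_{T-d,T}(g,a;Q,M)$.

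For the second part, the orbit spends time more than $s/M$ in $\widetilde K_Q$. Each visit to $[i]$ with $i\le Q$ lasts at most $\tau_Q$ and contributes $1/i\ge 1/Q$ to $S_n\Delta_V$. Hence $S_n\Delta_V(x)\ge \frac{s}{M\tau_Q Q}$, and this part is bounded by $e^{-tT/(MQ\tau_Q)+O(t)}$ times the full sum for $g$. Consequently
\[
P^\Phi(g-tV)\le \max\Big\{P_\infty^\Phi(g,a;Q,M),\ P^\Phi(g)-\frac{t}{MQ\tau_Q}\Big\}.
\]
Here $\tau_Q<\infty$ requires the boundedness of $\tau$ on cylinders. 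If this is not available, I would instead restrict to the part of $\widetilde K_Q$ where $\tau$ is bounded, or use $\Delta_V$ weighted by $\tau$; this is the point I would double-check against the hypotheses.

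Letting $t\to\infty$ gives $L\le P_\infty^\Phi(g,a;Q,M)$ when $P^\Phi(g)<\infty$. If $P^\Phi(g)=\infty$, we first need finiteness of $P^\Phi(g-tV)$ for some $t$, or else both sides are $+\infty$ by the upper-bound argument. Finally, take the infimum over $Q,M$.

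\textbf{Main obstacle.} The delicate step is the lower bound, where an orbit failing the $(Q,M)$ condition must be penalized uniformly in the period. The time-versus-visit conversion needs $\tau_Q<\infty$, and the edge cases where the pressures are infinite need separate handling. Independence of $a$ and $d$ then follows because the right-hand side $L$ does not depend on them.
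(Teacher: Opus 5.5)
In the case $P^\Phi(g)<\infty$ your proof is correct and is essentially the paper's. For the upper inequality you use the same bound on $S_n\Delta_V(x)$ over $\mathrm{FPer}_a(T-d,T;Q,M)$; the paper phrases it as $P^\Phi_\infty(g-tV,a)=P^\Phi_\infty(g,a)\le P^\Phi(g-tV)$. For the lower inequality you use the same splitting into $\mathrm{FPer}_a(T-d,T;Q,M)$ and its complement, with the penalty $S_n\Delta_V(x)\ge s/(\tau_Q MQ)$ on the complement. You were right to flag $\tau_Q<\infty$: the paper's proof uses it tacitly, and it also tacitly uses $P^\Phi(g)<\infty$ when it bounds the complementary sum by $e^{T(P^\Phi(g)+1)}$.

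You should drop your treatment of the case $P^\Phi(g)=\infty$.

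\emph{The escape route never occurs.} Since $0\le\Delta_V\le 1$ and $n<s/\tau_0\le T/\tau_0$, every weight satisfies $\exp\int_0^s(g-tV)\ge e^{-tT/\tau_0}\exp\int_0^s g$. Hence $P^\Phi(g-tV)\ge P^\Phi(g)-t/\tau_0$, so "finiteness of $P^\Phi(g-tV)$ for some $t$" cannot happen when $P^\Phi(g)=\infty$.

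\emph{The fallback is unjustified.} The claim that "both sides are $+\infty$ by the upper-bound argument" does not follow, because that argument only gives $P^\Phi_\infty(g,a)\le L$.

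\emph{The claim is in fact false.} Take the star graph, whose only transitions are $1\to1$, $1\to j$ and $j\to1$ for $j\ge2$, with $\tau\equiv1$ and $g\equiv0$.
\begin{itemize}
\item Every periodic orbit spends at least half its time in $\widetilde K_1$. So $\mathrm{FPer}_a(T-d,T;Q,M)=\emptyset$ for $M\ge3$, and $P^\Phi_\infty(0,a)=-\infty$.
\item The infinitely many loops $1\,j$ give infinitely many periodic orbits of each even period, with weights at least $e^{-tn}$. So $P^\Phi(-tV)=+\infty$ for every $t$, i.e.\ $L=+\infty$.
\end{itemize}

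So the lemma genuinely needs $P^\Phi(g)<\infty$, together with $\sup_{x\in[b]}\tau(x)<\infty$, as hypotheses. With these added, your argument is complete.
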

    
    \begin{proof}
Notice that $P_\infty^\Phi(g)\ge P_\infty^\Phi(g-tV)$, for every $t\ge 0$. If $(x;s)\in \text{FPer}_a(T-d,T;Q,M)$, then 
   $$\int_0^s V(\Phi_t(x,0))\,\d t=S_n(\Delta_V\chi_{K_Q})(x)+S_n(\Delta_V\chi_{K^c_Q})(x)\le \frac{s}{\tau_0 M}+\frac{s}{Q}\le T\bigg(\frac{1}{\tau_0 M}+\frac{1}{Q}\bigg),$$
where $n\in\N$ satisfies  $S_n\tau(x)=s$, and we used (\ref{ineq:QM}) together with the definition of $\Delta_V$. It follows that 
$$Z_{T-d,T}(g-tV,a;Q,M)\ge Z_{T-d,T}(g,a;Q,M)\exp\bigg(-T\bigg(\frac{1}{\tau_0 M}+\frac{1}{Q}\bigg)\bigg),$$
and thus 
$$P_\infty^\Phi(g-tV, a; Q,M)\ge P_\infty^\Phi(g, a; Q,M)-T\bigg(\frac{1}{\tau_0 M}+\frac{1}{Q}\bigg).$$
We conclude that $P_\infty^\Phi(g-tV, a)\ge P_\infty^\Phi(g, a)$, and therefore $P_\infty^\Phi(g-tV, a)= P_\infty^\Phi(g, a)$. Since for all $t$ we have $P_\infty^\Phi(g-tV, a)\le P^\Phi(g-tV, a)$, we obtain \begin{align}\label{ineq:33}P_\infty^\Phi(g)\le \lim_{t\to\infty}P^\Phi(g-tV).
\end{align}
We will now show that the    reverse inequality holds. 

Set $\text{FPer}^c_a(T;Q,M)=\text{FPer}_a(T-d,T)\setminus \text{FPer}_a(T-d,T;Q,M)$. Note that if $(x;s)\in \text{FPer}^c_a(T;Q,M)$, then
$ S_n\chi_{K_Q}(x) > \frac{s}{\tau_Q M}$ (see (\ref{ineq:QM})), and therefore
  $$\int_0^s V(\Phi_t(x,0))\,\d t\ge S_n(\Delta_V\chi_{K_Q})(x)\ge \frac{s}{\tau_Q MQ}\ge \frac{T-d}{ \tau_Q MQ}.$$
Then, 
\begin{align*}
\sum_{(x,s)\in  \mathrm{FPer}^c_a(T;Q,M)}\exp\left(\int_0^{s} (g-tV)(\Phi_t(x,0)) \, \d t\right) &\le  e^{-\frac{(T-d)t}{ \tau_Q MQ}}\sum_{(x,s)\in  \mathrm{FPer}^c_a(T;Q,M)}\exp\left(\int_0^{s} g(\Phi_t(x,0)) \, \d t\right)
\end{align*}
By the definition of the pressure, there exists $t_1>0$ such that if $T\ge t_1$, then 
\begin{align*}
\sum_{(x;s)\in  \mathrm{FPer}^c_a(T;Q,M)}\exp\left(\int_0^{s} g(\Phi_t(x,0)) \, \d t\right)\le \exp\bigg(T(P^\Phi(g)+1)\bigg).
\end{align*}

Note that if $T\ge t_1$ and $t>\frac{T}{T-d}\tau_Q MQ(P^\Phi(\phi)+1+\kappa),$ we have that 
$$\sum_{(x;s)\in  \mathrm{FPer}_a(T-d,T)}\exp\left(\int_0^{s} (g-tV)(\Phi_t(x,0)) \, \d t\right)\le Z_{T-d,T}(g-tV,a;Q,M) +e^{-\kappa T},$$
for every $\kappa>0$. We conclude that $P^\Phi(g-tV)\le \max\{P^\Phi_\infty(g,a;Q,M),-\kappa\}$, and therefore $$\lim_{t\to\infty}P^\Phi(g-tV)\le P_\infty^\Phi(g,a;Q,M).$$
This bound, together with inequality (\ref{ineq:33}) finishes the proof. 
  \end{proof}

\begin{theorem} \label{prin_var_flow_infty}
Let $(\Sigma,\sigma)$ be a transitive countable Markov shift, and $\tau:\Sigma\to\R$ a roof function bounded away from zero with summable variations. Moreover, assume that $\sup_{x\in[b]}\tau(x)< \infty$ for all $b\in\N$. Let $(Y,\Phi)$ be the associated suspension flow
and  $g:Y\to\R$ be a continuous function such that $\Delta_g$ is of summable variations. Then, $$P_\infty^\Phi(g)=\sup_{(\nu_n)_n\to 0}\limsup_{n\to\infty} \left( h_{\nu_n}(\Phi)+\int g \,\d\nu_n \right),$$ where the supremum runs over sequences $(\nu_n)_n$ converging to the zero measure. 
\end{theorem}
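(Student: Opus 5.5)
We prove two inequalities, writing $P^{\mathrm{meas}}_\infty(g)$ for the right-hand side. Lemma \ref{lema_var} gives $P_\infty^\Phi(g)=\lim_{t\to\infty}P^\Phi(g-tV)$, and both directions will use this formula together with the variational principle for $P^\Phi$. The key point is that $\int V\,\d\nu$ small is equivalent to $\nu$ being close to the zero measure in the cylinder topology.

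\emph{Lower bound $P_\infty^\Phi(g)\ge P^{\mathrm{meas}}_\infty(g)$.} Let $(\nu_n)_n\to 0$ in the cylinder topology, with $\nu_n$ associated to $\mu_n\in\M_\sigma(\tau)$. We claim $\int V\,\d\nu_n=\int\Delta_V\,\d\mu_n/\int\tau\,\d\mu_n\to 0$. Split $\Delta_V=\Delta_V\chi_{K_Q}+\Delta_V\chi_{K_Q^c}$. The second piece is at most $1/Q$, so its contribution is at most $1/(Q\tau_0)$. The first piece is at most $\mu_n(K_Q)/\int\tau\,\d\mu_n$. By Remark \ref{rem:topsus}, convergence $\nu_n\to0$ on the cylinders $[i]\times[0,\tau_i]$, $i\le Q$ (this is where $\sup_{[i]}\tau<\infty$ is used), forces $\mu_n([i])/\int\tau\,\d\mu_n\to0$. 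Hence for each fixed $t$,
\[
h_{\nu_n}(\Phi)+\int g\,\d\nu_n\le P^\Phi(g-tV)+t\int V\,\d\nu_n,
\]
so $\limsup_n\big(h_{\nu_n}(\Phi)+\int g\,\d\nu_n\big)\le P^\Phi(g-tV)$. Letting $t\to\infty$ gives the inequality.

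\emph{Upper bound.} For each $t=k\in\N$, use the variational principle to pick $\nu_k\in\M_\Phi$ with
\[
h_{\nu_k}(\Phi)+\int g\,\d\nu_k-k\int V\,\d\nu_k>P^\Phi(g-kV)-1/k.
\]
Two cases arise.
\begin{enumerate}
\item If $P_\infty^\Phi(g)=-\infty$, there is nothing to prove.
\item Otherwise $P^\Phi(g-kV)$ is bounded below by $P^\Phi_\infty(g)$. Then $k\int V\,\d\nu_k\le h_{\nu_k}(\Phi)+\int g\,\d\nu_k-P^\Phi_\infty(g)+1/k$.
\end{enumerate}

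The main obstacle is to bound $h_{\nu_k}(\Phi)+\int g\,\d\nu_k$ uniformly in $k$, so that $\int V\,\d\nu_k=O(1/k)$. This bound holds when $P^\Phi(g)<\infty$, since that quantity is at most $P^\Phi(g)$. If $P^\Phi(g)=\infty$, I would first replace $g$ by $g-V$, which is harmless by the proof of Lemma \ref{lema_var}, or truncate at a level $t_0$ with $P^\Phi(g-t_0V)<\infty$ and apply the argument to $g-t_0V$, using $P^\Phi_\infty(g-t_0V)=P^\Phi_\infty(g)$.

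Once $\int V\,\d\nu_k\to0$, we get $\nu_k\to0$ on cylinders. Indeed, for any cylinder $C\subset[i]$,
\[
\nu_k(C\times[a,b])\le\tau_i\,\frac{\mu_k([i])}{\int\tau\,\d\mu_k}\le i\,\tau_i\int V\,\d\nu_k\to0.
\]
Finally, $h_{\nu_k}(\Phi)+\int g\,\d\nu_k\ge P^\Phi(g-kV)-1/k$, and the right-hand side tends to $P_\infty^\Phi(g)$ by Lemma \ref{lema_var}. Therefore $P^{\mathrm{meas}}_\infty(g)\ge P^\Phi_\infty(g)$, which completes the proof.
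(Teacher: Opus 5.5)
Your argument is essentially the paper's proof. Both directions go through Lemma \ref{lema_var} and the variational principle for $P^\Phi(g-tV)$. For the lower bound you use $\int V\,\d\nu_n\to0$ along escaping sequences; you justify this carefully via the split over $K_Q$, whereas the paper only asserts it. For the upper bound you use near-maximizers $\nu_k$ for $g-kV$. Your direct estimate $k\int V\,\d\nu_k\le P^\Phi(g)-P^\Phi_\infty(g)+1/k$, followed by $\nu_k(C\times[a,b])\le i\,\tau_i\int V\,\d\nu_k$, is the contrapositive form of the paper's contradiction argument, in which a cylinder keeping positive mass forces $n\int V\,\d\nu_n\to\infty$.

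The one thing to correct is your treatment of $P^\Phi(g)=\infty$: neither of your proposed reductions is possible. Since $0\le\Delta_V\le 1$ and $\tau\ge\inf\tau>0$, every $\nu\in\M_\Phi$ satisfies $0\le\int V\,\d\nu\le 1/\inf\tau$. The variational principle then gives $P^\Phi(g-tV)\ge P^\Phi(g)-t/\inf\tau$. So if $P^\Phi(g)=\infty$, then $P^\Phi(g-tV)=\infty$ for every $t$: no $t_0$ with $P^\Phi(g-t_0V)<\infty$ exists, and $g-V$ also has infinite pressure.

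Lemma \ref{lema_var} is not available in that regime either, since its proof uses the bound $\sum\exp(\cdots)\le e^{T(P^\Phi(g)+1)}$. Here is an example where it actually fails. Take $\tau\equiv1$, $g\equiv0$, and the transitive shift with $1\to j$ and $j\to 1$ for all $j\ge2$. Every invariant probability gives mass $1/2$ to $[1]$, so $\mathrm{FPer}_1(T-d,T;Q,M)=\emptyset$ for $M\ge3$ and $P^\Phi_\infty(0)=-\infty$. On the other hand, $P^\Phi(-tV)=\infty$ for all $t$.

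So your argument proves the theorem under the tacit hypothesis $P^\Phi(g)<\infty$. The paper's proof has the same restriction: its final contradiction $P^\Phi(g)-n\int V\,\d\nu_n\ge P^\Phi_\infty(g)-1$ needs $P^\Phi(g)<\infty$. You should state this as an assumption rather than claim a reduction.
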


\begin{proof}
Let $(\nu_n)_n$ be a sequence in $\M_{\Phi}$ that converges to zero on the cylinder topology  and
$V:Y\to\R$ a function such that $\Delta_V=\sum_{i\in\N} \frac{1}{i} \chi_{[i]}$. Then,  
$$ \lim_{n \to \infty} \int V \, \d \nu_n =0.$$
By the variational principle for the pressure on the flow we have that, for every $t \in \R$

\begin{align*}
\limsup_{n \to \infty} \left( h_{\nu_n}(\Phi)  + \int g \, \d \nu_n \right) =
\limsup_{n \to \infty} \left(h_{\nu_n}(\Phi)  + \int (g - t V) \, \d \nu_n \right) \leq P^{\Phi}(g-tV).
\end{align*}
by Lemma \ref{lema_var}, we obtain
\begin{align*}
\limsup_{n \to \infty} \left( h_{\nu_n}(\Phi)  + \int g \, \d \nu_n \right) \leq \lim_{t \to \infty}
P^{\Phi}(g-tV)=P_\infty^\Phi(g).
\end{align*}
In order to prove the other inequality, assume first that $P_\infty^\Phi(g)>-\infty$. From Lemma \ref{lema_var}, for every $t \in \R$ we have $P^{\Phi}(g -tV) \geq P_\infty^\Phi(g)$. The variational principle implies the existence of a sequence $(\nu_n)_n$ in $\M_{\Phi}$, satisfying for every $n \in \N$:
\begin{align} \label{e_var_1}
h_{\nu_n}(\Phi)+ \int (g-nV) \, \d \nu_n \geq P^{\Phi}(g -n V) - \frac{1}{n},
\end{align}
with $ \int (g-nV) \, \d \nu_n > -\infty$. Thus,  $ \int g \, \d \nu_n > -\infty$. The inequality will follow once we prove that $(\nu_n)_n$ converges to zero. Assume by way of contradiction that there exists a cylinder $C \subset \Sigma$ such that $\limsup_{n\to\infty} \nu_n(C \times [0 , \tau_0])>0$. Denote by $\mu_n \in \M_{\sigma}$ the measure satisfying that $\nu_n=\frac{(\mu_n \times \text{Leb})|_{Y} }{(\mu_n \times \text{Leb})(Y)}.$
Suppose that $r \in \N$ is such that the cylinder $C \subset [r]$. Note that there exists positive real numbers $M_r, m_r$ such that $m_r \leq \tau(x) \leq M_r$, for every $x \in [r]$. In particular, for every $\mu \in \M_{\sigma}$,
$$
\frac{1}{M_r} \leq \frac{1}{\int \tau \, \d\mu} \leq \frac{1}{m_r}.
$$
Hence, since $\limsup_{n \to \infty}\nu_n(C \times [0 , \tau_0])>0$, there exists $A>0$ such that   $\limsup_{n \to \infty}\mu_n(C)>A$. 
Therefore,
$$
\int V \, \d\nu_n = \frac{\int \Delta_{V} \,\d \mu_n}{\int \tau  \, \d \mu_n} \geq \frac{A}{r M_r}.
$$
Thus,
$$
\lim_{n \to \infty} n \int V \, \d\nu_n = \infty.
$$
From equation \eqref{e_var_1} we see that for every $n \in \N$,
\begin{align*}
P^{\Phi}(g) \geq h_{\nu_n}(\Phi) + \int g \, \d \nu_n \geq P^{\Phi}(g -n V) - 
\frac{1}{n}  + n \int V \, \d \nu_n  \geq P_\infty^\Phi(g) +n \int V \, \d \nu_n  - \frac{1}{n}.
\end{align*}
Hence
$$
P(g) - n \int V \, d \nu_n \geq P_\infty^\Phi(g) - \frac{1}{n} \geq P_\infty^\Phi(g)-1.
$$
But $\lim_{n \to \infty} n \int V \, \d\nu_n = \infty$. This contradiction proves that $(\nu_n)_n$ converges to the zero measure and that
$$P^{\Phi}_{\infty}(g) = \lim_{n \to \infty}\left(h_{\nu_n}(\Phi) + \int g \, \d \nu_n\right).
$$
Finally, note that if $P^{\Phi}_{\infty}(g)=-\infty$, then, since for every sequence $(\nu_n)$ in $\M^{\Phi}$ converging to zero we have
$$\lim_{n \to \infty} \left(h_{\nu_n}(\Phi) + \int g \, \d \nu_n \right)\leq P^{\Phi}_{\infty}(g).$$
We obtain, $\lim_{n \to \infty} \left(h_{\nu_n}(\Phi) + \int g \, \d \nu_n \right)= - \infty$.
\end{proof}

\subsection{Semicontinuity properties of the pressure and SPR}

In this section, we relate the pressure at infinity to the upper
semicontinuity properties of the pressure map. More precisely, we show that
the pressure at infinity quantifies the failure of upper semicontinuity when escape of mass is allowed.
Analogous results have been obtained for countable Markov shifts in
\cite[Theorem 1]{itv} and \cite[Theorem 1.3]{v}, and for suspension flows
over countable Markov shifts, under the additional assumption of finite
topological entropy, in \cite[Theorem 1.7]{v}. 

Throughout this section, we assume that $(\Sigma,\sigma)$ is a transitive
countable Markov shift and that $\tau\colon\Sigma\to(0,\infty)$ is a roof
function bounded away from zero and with summable variations. We denote by
$(Y,\Phi)$ the associated suspension flow.

\begin{theorem}\label{thm:usc_sus}
Let $f: Y \to \R$ be such that $\Delta_f$ has summable variations and $P^{\Phi}(f) <\infty$. Let $(\nu_n)_n, \nu \in \M_{\Phi}$ and $\lambda \in [0,1]$. Assume that $(\nu_n)_n$ converges in the cylinder topology to $\lambda \nu$ and
$\liminf_{n \to \infty} \int f d \nu_n > -\infty$. Then
\begin{equation*}
\limsup_{n \to \infty} \left(h_{\nu_n}(\Phi) + \int f \, \d \nu_n\right) \leq \lambda \left(h_\nu(\Phi) + \int f \, \d \nu \right) + (1- \lambda) P^\Phi_{\infty}(f).\end{equation*}
\end{theorem}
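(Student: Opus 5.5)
The plan is to reduce the statement to the discrete-time upper semicontinuity result for countable Markov shifts \cite[Theorem 1.3]{v}, applied to the potentials $\Delta_f-c\tau$ on the base. Abramov's formula and the relation $\int g\,\d\nu=\int\Delta_g\,\d\mu/\int\tau\,\d\mu$ translate the flow quantities into base quantities. Write $\nu_n$ and $\nu$ as the normalized suspensions of $\mu_n,\mu\in\M_\sigma(\tau)$. Pass to a subsequence along which $h_{\nu_n}(\Phi)+\int f\,\d\nu_n\to L$, the limsup in the statement. Set $E:=h_\nu(\Phi)+\int f\,\d\nu$. For any $c\in\R$,
\[
h_{\mu_n}(\sigma)+\int(\Delta_f-c\tau)\,\d\mu_n=\Big(h_{\nu_n}(\Phi)+\int f\,\d\nu_n-c\Big)\int\tau\,\d\mu_n .
\]

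\emph{Case $\lambda=0$ or $\int\tau\,\d\mu_n\to\infty$.} By Remark~\ref{rem:topsus}, $\nu_n\to0$ in the cylinder topology. Then Theorem~\ref{prin_var_flow_infty} gives $L\le P^\Phi_\infty(f)$, which is the claim.

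\emph{Remaining case.} By Remark~\ref{rem:topsus}, after passing to a further subsequence we have $\mu_n\to\lambda_1\mu$ on cylinders and $\int\tau\,\d\mu_n\to\lambda_2\int\tau\,\d\mu$, with $\lambda=\lambda_1/\lambda_2$. Here $\lambda_2\int\tau\,\d\mu\ge\inf\tau>0$.

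\emph{Auxiliary inequality.} For every $c>P^\Phi_\infty(f)$ we want $P_\infty(\Delta_f-c\tau)\le 0$. Suppose instead that $(\rho_k)\subset\M_\sigma$ escapes on cylinders with $\limsup_k\big(h_{\rho_k}(\sigma)+\int(\Delta_f-c\tau)\,\d\rho_k\big)>0$. We may assume $\int\tau\,\d\rho_k<\infty$; otherwise the integral is $-\infty$ when $\Delta_f$ is integrable. Let $\tilde\nu_k$ be the associated flow measures. Since $\tau$ is bounded on each $[b]$ and bounded below by $\inf\tau$, we get $\tilde\nu_k(C\times[a,b])\le \mathrm{const}\cdot\rho_k(C)/\inf\tau\to0$, so $\tilde\nu_k\to0$. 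Dividing the displayed identity by $\int\tau\,\d\rho_k$ gives $h_{\tilde\nu_k}(\Phi)+\int f\,\d\tilde\nu_k>c$ for infinitely many $k$. Theorem~\ref{prin_var_flow_infty} then yields $P^\Phi_\infty(f)\ge c$, a contradiction.

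\emph{Main estimate.} Apply \cite[Theorem 1.3]{v} to $\phi_c=\Delta_f-c\tau$, which has summable variations, with $c$ slightly larger than $P^\Phi_\infty(f)$ (or arbitrary if that is $-\infty$). Using the displayed identity on both sides, this gives
\[
(L-c)\lambda_2\int\tau\,\d\mu\le\lambda_1(E-c)\int\tau\,\d\mu+(1-\lambda_1)P_\infty(\phi_c)\le \lambda_1(E-c)\int\tau\,\d\mu .
\]
Dividing by $\lambda_2\int\tau\,\d\mu$ yields $L\le c+\lambda(E-c)=\lambda E+(1-\lambda)c$. Letting $c\downarrow P^\Phi_\infty(f)$ gives the theorem.

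\emph{Main obstacle.} The discrete theorem must actually apply to $\phi_c$. Its hypotheses are finiteness of $P(\Delta_f-c\tau)$ and $\liminf\int\phi_c\,\d\mu_n>-\infty$. The second follows from $\liminf\int f\,\d\nu_n>-\infty$ together with boundedness of $\int\tau\,\d\mu_n$ in the non-escaping case. The first is the delicate point: for $c$ below $P^\Phi(f)$, the pressure $P(\Delta_f-c\tau)$ can be $+\infty$. I would first try to avoid this by proving the needed base inequality directly for $\phi_c$, rerunning the proof of \cite[Theorem 1.3]{v}. That proof works with the measures $\mu_n$ and the topological pressure at infinity computed from periodic points, and only uses finiteness along the given sequence. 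Failing that, I would replace $f$ by $f-tV$ as in Lemma~\ref{lema_var}, chosen so that $P^\Phi(f-tV)$ is close to $P^\Phi_\infty(f)$, for which finiteness is easier to control. The correction terms $t\int V\,\d\nu_n$ converge to $t\lambda\int V\,\d\nu$; this is immediate when $\Delta_V$ is replaced by a finite truncation, which controls the error.
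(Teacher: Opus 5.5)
Your proposal follows the paper's proof. You translate to the base via Abramov and Kac, handle the escaping case with Theorem~\ref{prin_var_flow_infty}, apply \cite[Theorem 1.3]{v} to $\Delta_f-c\tau$ for $c>P^\Phi_\infty(f)$, drop $(1-\lambda_1)P_\infty(\Delta_f-c\tau)\le 0$, divide by $\lambda_2\int\tau\,\d\mu$, and let $c\downarrow P^\Phi_\infty(f)$. The only difference is that the paper quotes \cite[Lemma 8.2.1]{v} for $P_\infty(\Delta_f-c\tau)<0$, whereas you derive the needed inequality from the variational principle at infinity for the flow.

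The finiteness point you leave open is not an obstacle, and no change of potential is needed; your second fallback already contains the fix.
\begin{itemize}
\item Given $c>P^\Phi_\infty(f)$, Lemma~\ref{lema_var} (which uses $P^\Phi(f)<\infty$) gives $t\ge 0$ with $P^\Phi(f-tV)<c$.
\item Since $\Delta_{f-tV}=\Delta_f-t\Delta_V$ and $s\mapsto P(\Delta_{f-tV}-s\tau)$ is non-increasing, this gives $P(\Delta_f-t\Delta_V-c\tau)\le 0$.
\item Because $0<\Delta_V\le 1$, it follows that $P(\Delta_f-c\tau)\le t<\infty$.
\end{itemize}
So $P(\Delta_f-c\tau)=+\infty$ can only occur for $c\le P^\Phi_\infty(f)$, which you never use. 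Hence \cite[Theorem 1.3]{v} applies to $\Delta_f-c\tau$ directly, and re-running its proof is unnecessary.

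The same $t$ also repairs your auxiliary step. There, discarding $\rho_k$ with $\int\tau\,\d\rho_k=\infty$ is not justified when $\Delta_f\notin L^1(\rho_k)$. Instead, note that $\int\Delta_V\,\d\rho_k\le\sum_{i\le K}\rho_k([i])+1/K$, so this integral tends to zero along any sequence escaping on cylinders. Therefore
\[
P_\infty(\Delta_f-c\tau)=P_\infty(\Delta_f-t\Delta_V-c\tau)\le P(\Delta_f-t\Delta_V-c\tau)\le 0 .
\]
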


\begin{proof}Let $(\mu_n)_n$, $\mu$ be the  invariant measures of $\Sigma$ associated with $(\nu_n)_n$ and $\nu$ respectively. It follows by Remark \ref{rem:topsus} that if $\limsup_{n\to\infty}\int \tau \, \d \mu_n=\infty$ then $(\nu_n)_n$ converges to the zero measure, in which case the result follows by Theorem \ref{prin_var_flow_infty}. We will assume that $\sup_n\int \tau \, \d\mu_n<D$ for some $D$. Maybe after passing to a subsequence we can assume that $(\mu_n)_n$ converges on cylinders to $\lambda_1\mu$ and $\lim_{n\to\infty}\int\tau \, \d\mu_n=\lambda_2\int\tau \, \d\mu$, where $\lambda=\lambda_1/\lambda_2$. Note that $\liminf_{n\to\infty} \int f \, \d\nu_n>-\infty$ iff $\liminf_{n\to\infty}\frac{\int\Delta_f \, \d\mu_n}{\int \tau \, \d\mu_n}>-\infty$ iff there exists $C$ such that $\int \Delta_f \, \d\mu_n\ge C\int \tau \, \d\mu_n$ for every $n\in\N$. We conclude that $$\liminf_{n\to \infty} \int (\Delta_f-t\tau) \, \d\mu_n\ge (C-t)\limsup_{n\to \infty} \int \tau \, \d\mu_n>-|C-t|D.$$ We can now apply the upper semicontinuity result proved in \cite[Theorem 1.3]{v} and conclude the proof of the result as in \cite[Theorem 1.7]{v}. In \cite[Lemma 8.2.1]{v} it is proved that $P^\Phi_{\infty}(f)=\inf\{t: P_\infty(\Delta_f-t\tau)\le 0\}$. In particular, if $t>P^\Phi_{\infty}(f)$, then $P_\infty(\Delta_\phi-t\tau)<0$. By \cite[Theorem 1.3]{v} we obtain the
 $$\limsup_{n\to\infty}\left(h_{\mu_{n}}(\sigma)-\int (\Delta_f-t\tau)  \, \d\mu_{n}\right)\le \lambda_1\left(h_{\mu}(\sigma)- \int (\Delta_f-t\tau) \, \d\mu\right),$$
and therefore, 
\begin{align*}
\limsup_{n\to\infty} \left(h_{\nu_n}(\Phi)-\int (\phi-t) \, \d\nu_n \right)&=\limsup_{n\to\infty}\left(\frac{h_{\mu_{n}}(\sigma)-\int (\Delta_f-t\tau)  \, \d\mu_{n}}{\int \tau \, \d\mu_{n}}\right)\\
&\le \frac{\lambda_1}{\lambda_2}\left(\frac{h_{\mu}(\sigma)- \int (\Delta_f-t\tau) \, \d\mu}{\int \tau d\mu}\right)\\
&=\lambda \left(h_\nu(\Phi)- \int (\phi-t)\, \d\nu \right),
\end{align*}
which is equivalent to 
$$\limsup_{n\to\infty} \left(h_{\nu_n}(\Phi)+\int \phi \, \d\nu_n\right)\le \lambda \left(h_{\nu}(\Phi)+\int\phi \, \d\nu\right)+(1-\lambda)t.$$
Since $t>P_\infty^\Phi(f)$ was arbitrary, we obtain the desired inequality. 
\end{proof}

We also establish a compactness result for the space of $\Phi$-invariant sub-probability measures which is analogous to \cite[Theorem 1.1]{v}.

\begin{theorem}\label{thm:comp_sus}
Let $f: Y \to \R$ be such that $\sup f < \infty$,  $\Delta_f$ has summable variations and $P^{\Phi}(f) <\infty$. Assume that $(\nu_n)_n$ is a sequence in $\M_{\Phi}$ such that $\liminf_{n \to \infty} \int f \, \d \nu_n > -\infty$. Then, $(\nu_n)_n$ has a subsequence that converges on cylinders. 
\end{theorem}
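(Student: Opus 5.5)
The strategy is to reduce the statement to the compactness theorem for countable Markov shifts, \cite[Theorem 1.1]{v}, through the correspondence $\nu_n \leftrightarrow \mu_n$ given by \eqref{susp_meas}. Remark \ref{rem:topsus} then converts cylinder convergence on the base into cylinder convergence for the flow. Let $\mu_n\in\M_\sigma(\tau)$ be the base measure associated with $\nu_n$. Since $\M_{\le 1}(\Phi)$ with the cylinder topology is metrizable, it suffices to extract a subsequence along which $\nu_n(C\times[a,b])$ converges for every cylinder $C$ and every $a<b$.

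\emph{Case 1: $\limsup_n \int\tau\,\d\mu_n=\infty$.} Pass to a subsequence with $\int\tau\,\d\mu_n\to\infty$. By Remark \ref{rem:topsus}, $\nu_n\to 0$ on cylinders. Concretely, $\nu_n(C\times[a,b])\le (b-a)\mu_n(C)/\int\tau\,\d\mu_n\to0$.

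\emph{Case 2: $\sup_n\int\tau\,\d\mu_n\le D<\infty$.}
\begin{enumerate}
\item As in the proof of Theorem \ref{thm:usc_sus}, the hypothesis $\liminf\int f\,\d\nu_n>-\infty$ gives a constant $C$ with $\int\Delta_f\,\d\mu_n\ge C\int\tau\,\d\mu_n\ge -|C|D$. Hence $\int(\Delta_f-t\tau)\,\d\mu_n$ is bounded below uniformly in $n$ for each fixed $t$.
\item The potential $\phi:=\Delta_f-t\tau$ with $t>P^\Phi(f)$ satisfies the hypotheses of \cite[Theorem 1.1]{v}. It has summable variations. Its pressure is finite, since $P(\Delta_f-t\tau)\le 0$ by the characterization of $P^\Phi$ in terms of the base pressure. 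Finally, $\sup\phi<\infty$ because $\sup\Delta_f\le \sup f\cdot\tau$ when $\sup f\le 0$. To arrange this, replace $f$ by $f-\sup f$, which shifts all integrals by a constant, and take $t\ge0$. Then $\Delta_f-t\tau\le0$.
\item Applying \cite[Theorem 1.1]{v} yields a subsequence with $\mu_n\to\lambda_1\mu$ on cylinders for some $\mu\in\M_\sigma$ and $\lambda_1\in[0,1]$.
\item Passing to a further subsequence, $\int\tau\,\d\mu_n\to L\in[\inf\tau,D]$; the limit is positive because $\tau$ is bounded away from zero.
\item It remains to check that $\nu_n(C\times[a,b])$ converges for each $C$, $a$, $b$. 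Write
\[
\nu_n(C\times[a,b])=\frac{1}{\int\tau\,\d\mu_n}\int_C \mathrm{Leb}\bigl([a,b]\cap[0,\tau(x))\bigr)\,\d\mu_n(x).
\]
The integrand is bounded on $C$ and is close to a function that is constant on deeper subcylinders, by the summable variations of $\tau$. Splitting $C$ into subcylinders of length $k$ and using cylinder convergence of $\mu_n$, the integral converges up to an error of order $V_k(\tau)\to0$. Hence the limit exists, and it equals $(\lambda_1/L)\int_C\mathrm{Leb}([a,b]\cap[0,\tau))\,\d\mu$.
\end{enumerate}

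The main obstacle is Step 2: checking that the base potential meets the precise hypotheses of \cite[Theorem 1.1]{v}, namely an upper bound together with the uniform integral lower bound. If that theorem needs more, the fallback is to prove tightness on cylinders directly. One would use Theorem \ref{thm:usc_sus}-type entropy bounds with $V$ from Lemma \ref{lema_var} to show that mass cannot disperse on infinitely many subcylinders of a fixed cylinder, followed by a diagonal argument over the countably many cylinders.
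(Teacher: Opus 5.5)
Your proposal is correct and follows the paper's proof essentially step by step: normalize so that $\sup f\le 0$, dispose of the case $\int\tau\,\d\mu_n\to\infty$ via Remark~\ref{rem:topsus}, use $\int\Delta_f\,\d\mu_n\ge C\int\tau\,\d\mu_n$ to bound $\int(\Delta_f-t\tau)\,\d\mu_n$ from below uniformly, apply \cite[Theorem 1.1]{v} to $\phi=\Delta_f-t\tau$, and transfer cylinder convergence back to the flow. Your Step 5 only spells out what the paper delegates to Remark~\ref{rem:topsus}; passing to the infinitely many length-$k$ subcylinders of $C$ needs a Scheff\'e-type argument, which works because $\mu_n(C)$ converges as well. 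The fallback is unnecessary, since the paper invokes \cite[Theorem 1.1]{v} under exactly these hypotheses.
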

\begin{proof}
Maybe after substracting a positive constant we can assume that $\sup f<0$. Let $(\mu_n)_n$ be the associated sequence of invariant measures on $\Sigma$. By Remark \ref{rem:topsus}, if $\limsup_{n\to\infty}\int \tau \, \d\mu_n=\infty$ then $(\nu_n)_n$ has a subsequence that converges to the zero measure. We will assume that $\sup_{n\in\N}\int\tau \, \d\mu_n<D$ for some $D$. Since $\tau$ is bounded away from zero $P^\Phi(f)<\infty$ is equivalent to $P(\Delta_f-t\tau)<\infty$ for some $t$. Furthermore $\liminf \int f \, \d\nu_n>-\infty$ iff $\liminf_{n\to\infty}\frac{\int\Delta_f \, \d\mu_n}{\int \tau \, \d\mu_n}>-\infty$ iff there exists $C$ such that $\int \Delta_f \, \d\mu_n\ge C\int \tau \, \d\mu_n$ for every $n\in\N$. Note that 
$\int (\Delta_f-t\tau)\, \d\mu_n\ge (C-t)\int \tau \, \d\mu_n>-|C-t|D.$ In particular, if $\phi:=\Delta_f-t\tau$, we can apply \cite[Theorem 1.1]{v} and conclude that $(\mu_n)_n$ has a subsequence that converges on cylinders. This implies that $(\nu_n)_n$ has a convergent subsequence (see Remark \ref{rem:topsus}). \end{proof}

We  introduce the notion of strongly positive recurrent functions in the suspension flow setting (see \cite{v, irv2} for related results for finite entropy suspension flows). 

\begin{definition}
Let $(\Sigma,\sigma)$ be a  transitive countable Markov shift. Let  $\tau:\Sigma\to \R$ be a function of summable variations which is bounded away from zero. Let $(Y,\Phi)$ be the associated suspension flow. A continuous  function $f:Y \to \R$ such that $\Delta_f$ is of summable variations is \emph{strongly positive recurrent} (SPR) if $P^{\Phi}_{\infty}(f)<P^{\Phi}(f)$. 
\end{definition}

The following result establishes, under a mild additional assumption, both
the existence of equilibrium measures for strongly positive recurrent
potentials and the differentiability of the pressure.
Define 
\begin{equation}\label{s_infty}
    s_\infty(f)=\inf\{t\ge0: P^\Phi(tf)<\infty\}
\end{equation}

\begin{theorem}\label{thm:spr_deri}
Let $f: Y \to \R$ be SPR such that $\sup f < \infty$ and $P^{\Phi}(f) <\infty$. Assume that $s_{\infty}(f) <1$. Let $(\nu_n)_n$ be a sequence in $\M_{\Phi}$ such that $$\lim_{n \to \infty} \left(h_{\nu_n}(\Phi) + \int f \, \d \nu_n \right) = P^{\Phi}(f).$$
Then, there exists an equilibrium state $\nu_f$ and $\nu_n \to \nu_f$ in the weak$^*$ topology. Moreover, if $g:Y \to \R$ is continuous, bounded and $\Delta_g$ has summable variations, then the pressure function $t \mapsto P^{\Phi}(f+ tg) $ is differentiable at zero and its derivative  is $\int g \, \d \nu_f$.
\end{theorem}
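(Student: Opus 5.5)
We use compactness in the cylinder topology (Theorem \ref{thm:comp_sus}) together with the upper semicontinuity estimate (Theorem \ref{thm:usc_sus}); the SPR gap then rules out escape of mass.

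\emph{Step 1 (no escape of mass).} Since $h_{\nu_n}(\Phi)+\int f\,\d\nu_n\to P^\Phi(f)$, the free energies are eventually bounded below. The delicate point is whether $\int f\,\d\nu_n$ stays bounded below, since entropy could in principle be large while $\int f\,\d\nu_n\to-\infty$. This is where $s_\infty(f)<1$ enters. Pick $s\in(s_\infty(f),1)$, so $P^\Phi(sf)<\infty$. The variational principle gives
\[
h_{\nu_n}(\Phi)+s\int f\,\d\nu_n\le P^\Phi(sf).
\]
Subtracting this from the free energy of $\nu_n$ yields
\[
(1-s)\int f\,\d\nu_n\ge P^\Phi(f)-o(1)-P^\Phi(sf),
\]
so $\liminf_n\int f\,\d\nu_n>-\infty$. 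Theorem \ref{thm:comp_sus} then gives a subsequence converging on cylinders to some $\lambda\nu$ with $\nu\in\M_\Phi$ and $\lambda\in[0,1]$; if $\lambda=0$, take $\nu$ arbitrary. Theorem \ref{thm:usc_sus} yields
\[
P^\Phi(f)\le \lambda\bigl(h_\nu(\Phi)+\textstyle\int f\,\d\nu\bigr)+(1-\lambda)P^\Phi_\infty(f)\le \lambda P^\Phi(f)+(1-\lambda)P^\Phi_\infty(f).
\]
For $\lambda=0$, Theorem \ref{prin_var_flow_infty} gives directly $P^\Phi(f)\le P^\Phi_\infty(f)$. In either case the SPR inequality $P^\Phi_\infty(f)<P^\Phi(f)$ forces $\lambda=1$, and then the first display forces $h_\nu(\Phi)+\int f\,\d\nu=P^\Phi(f)$. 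Thus $\nu$ is an equilibrium state. It is the unique one: equilibrium states of the flow correspond to equilibrium states of $\Delta_f-P^\Phi(f)\tau$ on the base, and those are unique for summable variations by \cite{bs}. Since every subsequence has a further subsequence converging to the same $\nu_f$, and convergence on cylinders without loss of mass coincides with weak$^*$ convergence \cite[Lemma 6.7]{iv}, the whole sequence satisfies $\nu_n\to\nu_f$ weak$^*$.

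\emph{Step 2 (differentiability).} Put $Q(t)=P^\Phi(f+tg)$. Because $g$ is bounded, $Q$ is finite and convex on all of $\R$. The analogue of Proposition \ref{der}(a) for the flow characterises $\partial Q(0)$ via sequences of measures with $h_{\nu_n}(\Phi)+\int f\,\d\nu_n\to P^\Phi(f)$ and $\int g\,\d\nu_n\to D$. To obtain it, approximate $Q$ pointwise by pressures on compact invariant subsets (compact SFT subflows), which are differentiable with derivative $\int g\,\d\nu_{n,t}$. Lemma \ref{lem:local-convex} then produces $t_n\to0$ and equilibrium states $\nu_n:=\nu_{n,t_n}$ with $\int g\,\d\nu_n\to D$ and
\[
h_{\nu_n}(\Phi)+\int f\,\d\nu_n=Q_n(t_n)-t_nQ_n'(t_n)\to Q(0).
\]
By Step 1, $\nu_n\to\nu_f$ weak$^*$. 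Since $g$ is bounded and continuous, $\int g\,\d\nu_n\to\int g\,\d\nu_f$, so $D=\int g\,\d\nu_f$. Hence $\partial Q(0)$ is a singleton, $Q$ is differentiable at $0$, and $Q'(0)=\int g\,\d\nu_f$.

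The main obstacle is Step 1's lower bound on $\int f\,\d\nu_n$, which is precisely where $s_\infty(f)<1$ is needed. A secondary point is making sure the compact approximation of $P^\Phi(f+tg)$ is valid for the flow; it follows from $P^\Phi=\sup\{P(\cdot|K):K\in\cK\}$.
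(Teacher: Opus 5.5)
Your proposal is correct. Step 1 is the paper's argument essentially verbatim: the lower bound on $\int f\,\d\nu_n$ via $P^\Phi(sf)<\infty$, then Theorem \ref{thm:comp_sus} and Theorem \ref{thm:usc_sus}, with the SPR gap forcing $\lambda=1$. You are in fact more careful than the paper about uniqueness and about the subsequence argument that upgrades cylinder convergence to weak$^*$ convergence of the whole sequence.

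Step 2 takes a genuinely different route. The paper perturbs the potential. Since $g$ is bounded and $P^\Phi$, $P^\Phi_\infty$ are $1$-Lipschitz in the sup norm, $f+tg$ stays SPR with $s_\infty(f+tg)=s_\infty(f)<1$ for small $|t|$, so Step 1 yields equilibrium states $\nu_t$. The paper then uses the convexity sandwich $\int g\,\d\nu_0\le \frac{P^\Phi(f+tg)-P^\Phi(f)}{t}\le \int g\,\d\nu_t$ for $t>0$ (reversed for $t<0$). It applies Step 1 once more to $(\nu_t)$, whose $f$-free energies tend to $P^\Phi(f)$, to get $\nu_t\to\nu_0$.

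You never leave the potential $f$. You describe all of $\partial Q(0)$ through maximizing sequences for $f$, as in Proposition \ref{der.flow}(a), and Step 1 forces every such sequence to $\nu_f$, so $\partial Q(0)=\{\int g\,\d\nu_f\}$. This frees you from any stability of the SPR condition and from needing equilibrium states for $f+tg$. The price is the compact-approximation machinery: an exhausting sequence of compact transitive subflows with differentiable pressures, plus Lemma \ref{lem:local-convex}.

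Re-deriving this rather than citing Proposition \ref{der.flow} is sensible. That proposition assumes local boundedness of $\tau$, $\Delta_f$, $\Delta_g$, which this theorem does not, and those hypotheses are used only for the density argument in part (b).

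Neither apparatus is strictly necessary. For $t\downarrow 0$, any $\nu$ with $h_\nu(\Phi)+\int (f+tg)\,\d\nu\ge P^\Phi(f+tg)-t^2$ satisfies $\frac{Q(t)-Q(0)}{t}\le \int g\,\d\nu+t$. Such $\nu$ also have $f$-free energy tending to $Q(0)$, so Step 1 applies to these near-maximizers directly.
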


\begin{proof} Let us first prove the existence of an equilibrium state. Since $s_\infty(f)<1$, there exists $\epsilon>0$ such that $P^{\Phi}((1-\epsilon)f)<\infty$. Note that
$$\lim_{n \to \infty} \left(h_{\nu_n}(\Phi) + \int f \, \d \nu_n \right)\le P^{\Phi}((1-\epsilon)f)+\epsilon\liminf_{n\to\infty} \int f \, \d\nu_n.$$
We conclude that $\liminf_{n\to\infty}\int f \, \d\nu_n>-\infty$. Because $\Delta_f$ has summable variations $f$ has at most one equilibrium state.  By Theorem \ref{thm:comp_sus} we know that any subsequence has a subsubsequence which converges on cylinders to $\lambda \nu$, for some $\nu\in\M_\Phi$ and $\lambda\in[0,1]$. It is enough to prove that $\lambda=1$ and that $\nu$ is an equilibrium state. For simplicity we will assume that $(\nu_n)_n$ converges on cylinders to $\lambda\nu$. It follows by Theorem \ref{thm:usc_sus} that 
\begin{equation*}
P^\Phi(f)=\limsup_{n \to \infty} \left(h_{\nu_n}(\Phi) + \int f \, \d \nu_n\right) \leq \lambda \left(h_\nu(\Phi) + \int f \, \d\nu \right) + (1- \lambda) P^\Phi_{\infty}(f).\end{equation*}
Since $f$ is SPR, that is, $P^\Phi_{\infty}(f)<P^\Phi(f)$, we conclude that $\lambda=1$ and that $P^\Phi(f)=h_\nu(\Phi) + \int f \, \d \nu$. In other words, $\nu$ is an equilibrium state, which by uniqueness we will denote by $\nu_f$. 

We will now prove that the derivative of   $h(t)=P^\Phi(f+tg)$ at $t=0$ is equal to $\int g \, \d\nu_f$. First note that $f+tg$ is SPR for $|t|$ sufficiently small and that $s_\infty(f+tg)=s_\infty(f)<1$. As proved above, we obtain that $f+tg$ has an equilibrium state for $|t|$ small (which we assume from now on). Denote the equilibrium state of $f+tg$ by $\nu_t$; with this notation $\nu_0=\nu_f$. Note that if $t>0$ we have that 
$$\int g \, \d\nu_0\le \frac{P^\Phi(f+tg)-P^\Phi(f)}{t}\le \int g \, \d\nu_t,$$
and similarly, if $t<0$ we have that 
$$\int g \, \d\nu_0\ge \frac{P^\Phi(f+tg)-P^\Phi(f)}{t}\ge \int g \, \d\nu_t.$$
Observe that 
$$P^\Phi(f)=\lim_{t\to 0}P^\Phi(f+tg)=\lim_{t\to 0} \left(h_{\nu_t}(\Phi)+\int (f+tg)\, \d\nu_t\right)=\lim_{t\to 0}h_{\nu_t}(\Phi)+\int f\, \d\nu_t.$$
We already proved that under this condition we have that $(\nu_t)_{t}$ converges in the weak$^*$ topology to $\nu_0$ as $t\to 0$. Combining this observation with the inequalities above we obtain that $h'(t)=\int g \, \d\nu_0$. \end{proof}

\subsection{Further properties of the pressure at infinity}
In this section, we collect basic properties of the pressure at infinity regarding its convexity, regularity and the relation between its derivatives and particular sequences of measures converging to zero.

\begin{lemma} The map $t\mapsto P_\infty^\Phi(t g)$ is convex and continuous wherever defined. Moreover, $|P_\infty^\Phi(g_1)-P_\infty^\Phi(g_2)|\le |g_1-g_2|_0$
\end{lemma}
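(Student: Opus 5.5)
I will work directly from the topological definition of $P_\infty^\Phi$ via the sums $Z_{T-d,T}(g,a;Q,M)$, and use the variational description of Theorem~\ref{prin_var_flow_infty} as a cross-check.

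\emph{Lipschitz estimate.} Fix $a$, $Q$, $M$, $d$. For every $(x;s)\in\mathrm{FPer}_a(T-d,T;Q,M)$ we have $s\le T$. Hence
\[
\left|\int_0^s g_1(\Phi_t(x,0))\,\d t-\int_0^s g_2(\Phi_t(x,0))\,\d t\right|\le T|g_1-g_2|_0 .
\]
Summing over the same index set gives
\[
Z_{T-d,T}(g_1,a;Q,M)\le e^{T|g_1-g_2|_0}\,Z_{T-d,T}(g_2,a;Q,M).
\]
Taking $\frac1T\log$ and the $\limsup$, then $\inf_Q\inf_M$, yields $P_\infty^\Phi(g_1)\le P_\infty^\Phi(g_2)+|g_1-g_2|_0$. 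Exchanging the roles of $g_1$ and $g_2$ gives the inequality with absolute values. The infima over $Q,M$ preserve this additive bound, and the convention $Z=0$ (which gives $\log 0=-\infty$) causes no trouble. The same argument works through the measure side: $|\int g_1\,\d\nu-\int g_2\,\d\nu|\le|g_1-g_2|_0$ holds for every probability measure $\nu$, so the bound passes through the $\sup\limsup$ in Theorem~\ref{prin_var_flow_infty}.

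\emph{Convexity.} I will use Theorem~\ref{prin_var_flow_infty}. Set $\mathcal F_t(\nu)=h_\nu(\Phi)+t\int g\,\d\nu$, which is affine in $t$. Then
\[
P_\infty^\Phi(tg)=\sup_{(\nu_n)\to0}\limsup_{n\to\infty}\mathcal F_t(\nu_n).
\]
For a fixed sequence $(\nu_n)$, the map $t\mapsto\limsup_n\mathcal F_t(\nu_n)$ is a limsup of affine functions, and a limsup of affine functions is convex: indeed $\limsup(a_n+b_n)\le\limsup a_n+\limsup b_n$ gives $\limsup\mathcal F_{\alpha s+(1-\alpha)t}(\nu_n)\le\alpha\limsup\mathcal F_s(\nu_n)+(1-\alpha)\limsup\mathcal F_t(\nu_n)$. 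A supremum of convex functions is convex, so $t\mapsto P_\infty^\Phi(tg)$ is convex.

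An alternative route uses Hölder's inequality on the sums. It gives
\[
Z(\alpha s g+(1-\alpha)tg)\le Z(sg)^\alpha\,Z(tg)^{1-\alpha}
\]
for fixed $Q,M$, so each $P_\infty^\Phi(\cdot g,a;Q,M)$ is convex in $t$. This family is monotone in $Q,M$, since increasing $Q$ or $M$ shrinks the index set. I expect the main subtlety here: an infimum of convex functions need not be convex. This can be handled because the family is nested, and because Lemma~\ref{lema_var} expresses $P_\infty^\Phi(tg)=\lim_{s\to\infty}P^\Phi(tg-sV)$. Taking $s=s(t)$ along a common sequence, and using that $P^\Phi$ is convex and monotone in the potential, one could push convexity through the limit. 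For this reason I would present the variational argument as the primary proof.

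\emph{Continuity.} A convex function that is finite on an open interval is continuous there; this settles continuity on the interior of the domain where $P_\infty^\Phi(tg)$ is finite. If $g$ is bounded, the Lipschitz estimate gives more directly $|P_\infty^\Phi(tg)-P_\infty^\Phi(t'g)|\le|t-t'|\,|g|_0$, so the map is Lipschitz wherever it is finite. Finally, I would add a remark interpreting ``wherever defined'' as the set where $P_\infty^\Phi(tg)\in\mathbb R$, which is an interval by convexity.
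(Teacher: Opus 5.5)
Your proposal is correct and follows the paper's argument. The Lipschitz bound comes directly from the definition via the sums $Z_{T-d,T}$ (using $s\le T$), convexity comes from Theorem~\ref{prin_var_flow_infty} as a supremum of limsups of affine functions, and continuity then follows from convexity (or from the Lipschitz bound). Your explicit check that such a limsup is convex fills in the step the paper leaves implicit. The H\"older route is unnecessary, though it also works, since the infimum over the nested family in $Q,M$ is a monotone limit of convex functions.
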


This directly follows from the variational principle (Theorem \ref{prin_var_flow_infty}), the fact that the supremum of convex functions is convex, and the definition of the pressure at infinity.

In order to describe the asymptotics of the pressure at infinity we consider the following definition.

\begin{definition}Let $g:Y\to\R$ be a continuous function. We define $$\beta_\infty(g)=\sup_{(\nu_n)_n\to0}\limsup_{n\to\infty}\int g \, \d\nu_n \quad\text{ and }\quad \alpha_\infty(g)=\inf_{(\nu_n)_n\to0}\limsup_{n\to\infty}\int g \, \d\nu_n,$$ where the supremum and infimum runs over all sequences $(\nu_n)_n$ converging to the zero measure. 
\end{definition}

Note that $\beta_\infty(g)$ and $\alpha_\infty(g)$ are the asymptotic derivatives of the convex function $t\mapsto P_\infty^\Phi(t g)$. The proof of the following is a direct consequence of standard properties of convex functions (see \cite[p.46]{bo}).

\begin{lemma}\label{lem:asymp}Let $f$ and $g$ be bounded and continuous functions on $Y$. Then, 
$$\lim_{t\to\infty} \frac{1}{t}P_\infty^\Phi(f+tg)=\beta_\infty(g),\text{ and } \lim_{t\to-\infty} \frac{1}{t}P_\infty^\Phi(f+t g)=\alpha_\infty(g)$$
\end{lemma}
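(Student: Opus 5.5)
The plan is to sandwich $\frac1t P_\infty^\Phi(f+tg)$ between two quantities that both tend to $\beta_\infty(g)$ as $t\to+\infty$, using only the variational principle at infinity (Theorem \ref{prin_var_flow_infty}). The case $t\to-\infty$ then follows by replacing $g$ with $-g$. Under that substitution $\beta_\infty(-g)$ becomes $-\inf_{(\nu_n)\to0}\liminf_n\int g\,\d\nu_n$. This agrees with $\alpha_\infty(g)$ as the paper writes it, once we note that along a sequence converging to zero we may pass to subsequences, so $\limsup$ and $\liminf$ give the same inf/sup. Throughout I write $F(\nu)=h_\nu(\Phi)+\int f\,\d\nu$.

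\emph{Upper bound.} Every $\nu\in\M_\Phi$ satisfies $h_\nu(\Phi)+\int f\,\d\nu\le P^\Phi(f)$. Take any sequence $(\nu_n)_n\to0$. Then
\[
\limsup_n\Bigl(F(\nu_n)+t\int g\,\d\nu_n\Bigr)\le P^\Phi(f)+t\limsup_n\int g\,\d\nu_n\le P^\Phi(f)+t\beta_\infty(g).
\]
Taking the supremum over such sequences gives $P_\infty^\Phi(f+tg)\le P^\Phi(f)+t\beta_\infty(g)$. If $P^\Phi(f)<\infty$, this yields $\limsup_{t\to\infty}\frac1t P_\infty^\Phi(f+tg)\le\beta_\infty(g)$. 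When $P^\Phi(f)=\infty$, I would instead bound $F(\nu)\le h_\nu(\Phi)+\sup f$. Since the entropy term could be infinite, here I would restrict to the finiteness regime in which $P^\Phi_\infty(f+tg)$ is defined, where convexity takes over (see below).

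\emph{Lower bound.} Since $f$ is bounded, $F(\nu)\ge h_\nu(\Phi)-\sup|f|\ge-\sup|f|$. Fix $\epsilon>0$ and choose $(\nu_n)_n\to0$ with $\limsup_n\int g\,\d\nu_n>\beta_\infty(g)-\epsilon$; pass to a subsequence so that this is a limit. Theorem \ref{prin_var_flow_infty} then gives
\[
P_\infty^\Phi(f+tg)\ge -\sup|f|+t(\beta_\infty(g)-\epsilon).
\]
Dividing by $t$ and letting $t\to\infty$, then $\epsilon\to0$, gives $\liminf_{t\to\infty}\frac1t P^\Phi_\infty(f+tg)\ge\beta_\infty(g)$. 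Note that $\beta_\infty(g)$ is finite because $g$ is bounded. The only requirement is that sequences converging to zero exist; this holds in the non-compact transitive case, for instance by Theorem \ref{prin_var_flow_infty} applied to $g-tV$.

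\emph{Main obstacle.} The delicate step is the upper bound when $P^\Phi(f)=\infty$, where free energies are not uniformly controlled. I would first use the convexity and Lipschitz property from the preceding lemma. Because $|P_\infty^\Phi(f+tg)-P_\infty^\Phi(tg)|\le|f|_0$, the limit $\lim\frac1t P^\Phi_\infty(f+tg)$ equals $\lim\frac1t P^\Phi_\infty(tg)$ whenever these are finite. So it suffices to treat $f=0$, where $P_\infty^\Phi(tg)\le h_\infty(\Phi)+t\beta_\infty(g)$. For the convex function $t\mapsto P_\infty^\Phi(tg)$, the limit of $\frac1t P_\infty^\Phi(tg)$ exists in $(-\infty,\infty]$ and equals its asymptotic slope, as in \cite[p.46]{bo}. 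Hence the bound above gives the claim whenever $h_\infty(\Phi)<\infty$. If $h_\infty(\Phi)=\infty$, then $P_\infty^\Phi(tg)=\infty$ for all $t$, since $g$ is bounded. In that case the statement should be read as being about the region where the pressure is defined, and I would record this convention.
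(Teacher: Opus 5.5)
Your proof is correct and is essentially the paper's argument made explicit. The paper only asserts that $\beta_\infty(g)$ and $\alpha_\infty(g)$ are the asymptotic slopes of the convex map $t\mapsto P_\infty^\Phi(f+tg)$ and cites standard convex analysis. Your bound $-\sup|f|+t(\beta_\infty(g)-\epsilon)\le P_\infty^\Phi(f+tg)\le |f|_0+h_\infty(\Phi)+t\beta_\infty(g)$ for $t>0$, obtained from Theorem~\ref{prin_var_flow_infty}, is precisely the content behind that citation, and it does not even need convexity.

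Your caveat that $h_\infty(\Phi)<\infty$ is required is a genuine implicit hypothesis of the statement, since otherwise $P_\infty^\Phi(f+tg)\equiv\infty$. The one loose remark is your claim that sequences converging to zero always exist in the non-compact transitive case. This can fail, e.g.\ for a star-shaped graph with bounded roof, but then both sides equal $-\infty$ and the claim holds trivially.
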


The derivatives of the pressure at infinity for suspension flows satisfy properties similar to those in the discrete time case.

\begin{proposition} \label{der.flow}
Let $(\Sigma, \sigma)$ be a transitive  CMS   and $\tau: \Sigma \to \R$ a roof function of summable variations, bounded away from zero and such that $\sup_{x\in[b]}\tau(x)< \infty$ for every $b\in\N$. Denote by $(Y, \Phi)$ the corresponding suspension flow. Let $f,g:Y\to \R$ be continuous functions such that  $\Delta_g$ and $\Delta_f$ are of summable variations. Furthermore, assume that $\sup_{x\in[b]}|\Delta_f(x)|< \infty$ and $\sup_{x\in[b]}|\Delta_g(x)|< \infty$ for every $b\in\N$. Suppose that  $Q^{\Phi}(t):=P^{\Phi}(f+ tg)$ is finite in a neighborhood of $t=0$. Set $Q_\infty^{\Phi}(t):=P_\infty(f+t g)$.

\begin{enumerate}

\item There exists a  sequence   $(\nu_n)_n$ in $\M_{\Phi}$ such that $$\lim_{n \to \infty} \left(h_{\nu_n}(\Phi)  + \int  f \, \d \nu_n\right)= P^{\Phi}(f)\quad \text{ and }\quad \lim_{n \to \infty} \int  g \, \d \nu_n=D$$ 
if and only if $D\in\partial Q^{\Phi}(0)$. Moreover, if $\nu$ is an equilibrium measure for $f$ and $g\in L^1(\nu)$ then $\int g \, \d \nu \in \partial Q^{\Phi}(0)$. 

\item There exists a  sequence    $(\nu_n)_n$ in $\M_{\Phi}$ such that $\nu_n \to 0$, $$\lim_{n \to \infty} \left(h_{\nu_n}(\Phi)  + \int  f \, \d \nu_n\right)= P_\infty^{\Phi}(f)\quad \text{ and }\quad \lim_{n \to \infty} \int  g \, \d \nu_n=D$$ 
if and only if $D\in\partial Q_\infty^{\Phi}(0)$.
    \end{enumerate}

Moreover, for parts $(a)$ and $(b)$, the sequence $(\nu_n)_n$ can be chosen such that each measure is ergodic and compactly supported.

\end{proposition}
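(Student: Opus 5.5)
The plan is to copy the proof of Proposition~\ref{der}, working directly with flow-invariant measures. The two ingredients are the flow variational principle (for part (a)) and Theorem~\ref{prin_var_flow_infty} (for part (b)). Pressure density is handled by Theorem~\ref{pressure_density_susp}; to also control the cylinder sets, I will apply Theorem~\ref{pressure_density} on the base.

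Write $\mathcal F_f(\nu)=h_\nu(\Phi)+\int f\,\d\nu$.

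\textbf{Necessity.} If $\mathcal F_f(\nu_n)\to P^\Phi(f)$ and $\int g\,\d\nu_n\to D$, then for each $t$ near $0$ the variational principle gives
\[
Q^\Phi(t)\ge \mathcal F_f(\nu_n)+t\int g\,\d\nu_n .
\]
Letting $n\to\infty$ yields $Q^\Phi(t)\ge Q^\Phi(0)+tD$, so $D\in\partial Q^\Phi(0)$. The same computation, with Theorem~\ref{prin_var_flow_infty} in place of the variational principle, handles escaping sequences in part (b). The statement about an equilibrium measure $\nu$ with $g\in L^1(\nu)$ follows by testing $\nu$ itself in the variational principle.

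\textbf{Sufficiency in (a).} Choose compact invariant subsets $K_n$ with $Q_n(t):=P(f+tg|K_n)\to Q^\Phi(t)$ pointwise near $0$. To get these, take an exhausting sequence of compact transitive subshifts of finite type $\Sigma_n\subset\Sigma$ and their suspensions; on each, $\tau$ is bounded and H\"older-type regularity of $\Delta_f,\Delta_g$ is not needed beyond summable variations. Then $Q_n(t)$ is the unique zero $s$ of $P_{\Sigma_n}(\Delta_{f+tg}-s\tau)$, which is differentiable in $t$ by the implicit function theorem. Moreover
\[
Q_n'(t)=\frac{\int\Delta_g\,\d\mu_{n,t}}{\int\tau\,\d\mu_{n,t}}=\int g\,\d\nu_{n,t},
\]
where $\nu_{n,t}$ is the ergodic equilibrium state of $f+tg$ on the suspension over $\Sigma_n$. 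Pointwise convergence $Q_n\to Q^\Phi$ comes from $P(\Delta_{f+tg}-s\tau)=\sup_n P_{\Sigma_n}(\cdot)$ together with the characterization $P^\Phi(g)=\inf\{s: P(\Delta_g-s\tau)\le0\}$; this needs a monotonicity/continuity argument in $s$, using that $\tau$ is bounded away from zero. Then Lemma~\ref{lem:local-convex} gives $t_n\to0$ with $Q_n'(t_n)\to D$ and $Q_n(t_n)-t_nQ_n'(t_n)\to Q^\Phi(0)$. Setting $\nu_n:=\nu_{n,t_n}$ gives the required sequence, and these measures are ergodic and compactly supported.

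\textbf{Sufficiency in (b).} First, in the definition of $P_\infty^\Phi(f+tg)$ one may restrict to ergodic compactly supported sequences. Given an escaping sequence $\nu_k$ with base measures $\mu_k$, apply Theorem~\ref{pressure_density} to $\mu_k$. Control $\tau$, $\Delta_f$, $\Delta_g$, and the indicator functions of the first $k$ cylinders on the base, with errors $1/k$, so that after normalizing via Abramov's formula and Remark~\ref{rem:topsus} the approximants still converge to zero and their free energies have the same limsup.

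Next, repeat the slope-selection argument of Proposition~\ref{der}(b). Choose escaping sequences nearly realizing $Q_\infty^\Phi(\pm\varepsilon_n)$, and compare them at $\pm\varepsilon_n\pm r_n$ to get escaping sequences $\nu_n^\pm$ with $\int g\,\d\nu_n^\pm\to D_\pm$ and $\mathcal F_f(\nu_n^\pm)\to Q_\infty^\Phi(0)$.

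The main obstacle is interpolating to $D\in[D_-,D_+]$. The convex combination $\theta\nu_n^-+(1-\theta)\nu_n^+$ is flow-invariant and the flow free energy $\mathcal F_f$ is affine on $\M_\Phi$, so the combination has free energy $\to Q_\infty^\Phi(0)$, $g$-integral $\to D$, and still escapes. I would do this convex combination on the flow side, not the base side, because entropy and integrals are affine for flow measures, whereas the base quotients are not.

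To recover ergodicity, pass to the base measure of the combination, which is a suitably $\tau$-weighted combination of $\mu_n^\pm$. Apply Theorem~\ref{pressure_density} there, controlling $\tau$, $\Delta_f$, $\Delta_g$ and the first $n$ cylinders. Then transfer back using Abramov's and Kac's formulas; since $\int\tau$ is controlled, the flow quantities converge. The reverse limsup inequality comes from Theorem~\ref{prin_var_flow_infty}.
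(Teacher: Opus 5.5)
Your proposal is correct and follows the paper's route. The paper's proof simply says to rerun the proof of Proposition~\ref{der} using the flow variational principle, Theorem~\ref{prin_var_flow_infty} and Theorem~\ref{pressure_density_susp}; your compact-suspension approximation with Lemma~\ref{lem:local-convex} for (a), and the flow-side convex combination followed by base-level pressure density for (b), are exactly the details that sketch leaves implicit. One small citation adjustment: controlling several functions at once ($\tau$, $\Delta_f$, $\Delta_g$ and cylinder indicators) should come from Lemma~\ref{prop:predens} rather than Theorem~\ref{pressure_density} as stated, and this is in fact slightly more careful than invoking Theorem~\ref{pressure_density_susp}, which assumes $g$ bounded and gives no explicit cylinder control.
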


\begin{proof}
The proofs of (a) and the right implication of (b) are exactly the same as in the  proof of Proposition \ref{der}. The left implication of (b)  follows as in 
Proposition \ref{der} but using Theorem \ref{pressure_density_susp}.
\end{proof}

The  following is a direct consequence of Proposition \ref{der.flow}.

\begin{corollary} \label{coro:der} For every $\gamma\in (\alpha_\infty(g),\beta_\infty(g))$, there exists a sequence of ergodic and compactly supported measures $(\nu_n)_n$ which converges to the zero measure and such that $\int g \, \d\mu_n\to\gamma$. 
\end{corollary}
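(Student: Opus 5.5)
The plan is to read the statement as a consequence of Proposition \ref{der.flow}(b), applied to the one-parameter family $t\mapsto P^\Phi_\infty(tg)$, together with the asymptotic description in Lemma \ref{lem:asymp}. Set $Q(t):=P^\Phi_\infty(tg)$. I assume that $Q$ is finite everywhere; this holds, for instance, when the pressure at infinity of the zero potential is finite, since $g$ is bounded. Then $Q$ is convex. By Lemma \ref{lem:asymp} with $f=0$,
\[
\lim_{t\to\infty}\frac{Q(t)}{t}=\beta_\infty(g), \qquad \lim_{t\to-\infty}\frac{Q(t)}{t}=\alpha_\infty(g).
\]
For a convex function, these asymptotic slopes are the supremum and the infimum of the one-sided derivatives. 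Hence the union of the subgradients, $\bigcup_{s\in\R}\partial Q(s)$, is an interval containing $(\alpha_\infty(g),\beta_\infty(g))$.

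Fix $\gamma$ in this open interval. The key step is to choose $s\in\R$ with $\gamma\in\partial Q(s)$. Such an $s$ exists by monotonicity of one-sided derivatives, since $Q'_+(s)\to\beta_\infty(g)>\gamma$ as $s\to\infty$ and $Q'_-(s)\to\alpha_\infty(g)<\gamma$ as $s\to-\infty$. Concretely, take $s=\inf\{u: Q'_+(u)\ge\gamma\}$; then $Q'_-(s)\le\gamma\le Q'_+(s)$.

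Next apply Proposition \ref{der.flow}(b) with potential $f:=sg$ and direction $g$. The function $t\mapsto P^\Phi(sg+tg)$ is finite near $0$ because $g$ is bounded and the pressure is finite. Also $\Delta_{sg}=s\Delta_g$ has summable variations and is bounded on each $[b]$, because $|\Delta_g|\le \|g\|_\infty\sup_{[b]}\tau<\infty$. Since $Q^\Phi_\infty(t)=P^\Phi_\infty(sg+tg)=Q(s+t)$, we have $\gamma\in\partial Q^\Phi_\infty(0)$. Proposition \ref{der.flow}(b) then provides ergodic, compactly supported measures $\nu_n\to 0$ in the cylinder topology with $\int g\,\d\nu_n\to\gamma$; the free energy conclusion is simply discarded.

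The main obstacle is the finiteness hypotheses needed to invoke Proposition \ref{der.flow}, namely finiteness of $P^\Phi$ and $P^\Phi_\infty$ near $sg$. If the entropy at infinity is infinite, $Q\equiv\infty$ and the argument above does not apply. In that case I would argue directly instead. By the definitions of $\alpha_\infty$ and $\beta_\infty$, pick escaping sequences with $\int g$ tending to values $a<\gamma<b$. Form convex combinations, which still escape to zero, so that the integrals converge to $\gamma$. Finally, make the measures ergodic and compactly supported using Theorem \ref{pressure_density_susp}, via Theorem \ref{pressure_density} applied to $\tau$, $\Delta_g$ and the indicators of finitely many cylinders, exactly as at the end of the proof of Proposition \ref{der}. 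The entropy control is not needed here, so the finite-entropy requirement can be met by first approximating each measure by a compactly supported one.
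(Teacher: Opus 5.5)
Your main argument is correct and is the argument the paper has in mind when it calls this a direct consequence of Proposition~\ref{der.flow}. You use Lemma~\ref{lem:asymp} to locate $s$ with $\gamma\in\partial Q(s)$ for $Q(t)=P^\Phi_\infty(tg)$, then apply part (b) to the potential $sg$ in direction $g$. Your infinite-entropy fallback goes beyond what the paper claims, since Proposition~\ref{der.flow} assumes finite pressure. There, ``first approximating each measure by a compactly supported one'' is circular as written: the valid justification is that the concatenation argument of Lemma~\ref{prop:predens}, with the entropy estimate dropped, uses only Birkhoff's theorem and so also works for infinite-entropy measures.
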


\section{Rate functions} \label{sec:rate}

In this section, we define the rate functions utilized in our large deviation results and investigate their fundamental properties. These rate functions are formulated explicitly in terms of thermodynamic quantities. Notably, we distinguish between rate functions associated with the entire system and those at infinity; the latter are defined precisely via the pressure at infinity.

Let $(\Sigma,\sigma)$ be a transitive countable Markov shift and $\tau:\Sigma\to\R$ a roof function bounded away from zero, of summable variations and such that $\sup_{x\in[b]}\tau(x)< \infty$ for all $b\in\N$. Consider $(Y,\Phi)$  the associated suspension flow.

\begin{definition} Let $f, g$ be continuous functions defined on $Y$ such that $\Delta_f,\Delta_g$ are of summable variations. Assume that $P^\Phi(f)=0$ and $g$ is bounded. For $\gamma\in\R$ define $$I^{f,g}(\gamma)=\inf_{q\ge 0} P^\Phi(f+q(g-\gamma)) \quad \text{ and } \quad  I^{f,g}_\infty(\gamma)=\inf_{q\ge 0} P^\Phi_\infty(f+q(g-\gamma)).$$ 
\end{definition}

\begin{lemma}\label{continuity} $I^{f,g}$ is continuous on $(\alpha(g),\beta(g))$ and $I^{f,g}_\infty$ is continuous on $(\alpha_\infty(g), \beta_\infty(g))$. 
\end{lemma}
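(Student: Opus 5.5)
The plan is to show that $I^{f,g}$ (and likewise $I^{f,g}_\infty$) is a concave, finite function on the relevant open interval. A finite concave function on an open interval is automatically continuous, so this is enough.

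First we note the structure of $F(q,\gamma):=P^\Phi(f+q(g-\gamma))$. Since $g$ is bounded, the map $\gamma\mapsto P^\Phi(f+qg-q\gamma)=P^\Phi(f+qg)-q\gamma$ is affine in $\gamma$ for each fixed $q\ge 0$. Here we use that adding a constant $c$ to the potential shifts the flow pressure by exactly $c$, which is clear from the variational principle or directly from the definition via periodic orbits. The same holds for $P^\Phi_\infty$, using Theorem~\ref{prin_var_flow_infty}, or directly from the definition with $Z_{T-d,T}$, since $\int_0^s c\,\d t=cs$ with $s\in[T-d,T]$. Hence
\[
I^{f,g}(\gamma)=\inf_{q\ge0}\bigl(P^\Phi(f+qg)-q\gamma\bigr)
\]
is an infimum of affine functions of $\gamma$. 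It is therefore concave and upper semicontinuous as a function into $[-\infty,\infty)$, and it is non-increasing in $\gamma$ because $q\ge0$.

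Next we need finiteness on the open intervals. Taking $q=0$ gives $I^{f,g}(\gamma)\le P^\Phi(f)=0<\infty$. For the lower bound, with $\gamma<\beta(g)$, we pick an invariant measure $\nu$ with $\int g\,\d\nu>\gamma$ and with $h_\nu(\Phi)+\int f\,\d\nu>-\infty$; we take this from the definition of $\beta(g)$ as the supremum of $\int g\,\d\nu$, presumably over measures with finite free energy. Then for every $q\ge0$,
\[
P^\Phi(f+q(g-\gamma))\ge h_\nu(\Phi)+\int f\,\d\nu+q\Bigl(\int g\,\d\nu-\gamma\Bigr)\ge h_\nu(\Phi)+\int f\,\d\nu,
\]
which is a finite lower bound independent of $q$. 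So $I^{f,g}$ is finite on $(-\infty,\beta(g))\supset(\alpha(g),\beta(g))$.

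For $I_\infty^{f,g}$ the argument is the same, but with a sequence $\nu_n\to0$ with $\liminf_n\int g\,\d\nu_n>\gamma$ in place of a single measure. Such a sequence exists for $\gamma<\beta_\infty(g)$ by the definition of $\beta_\infty(g)$ or by Corollary~\ref{coro:der}. It also needs $\limsup_n(h_{\nu_n}+\int f\,\d\nu_n)>-\infty$, and Theorem~\ref{prin_var_flow_infty} then gives the lower bound. A concave function that is finite on an open interval is continuous there, and this concludes the proof.

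The main obstacle is the finiteness of the free energy along the chosen measures in the ``at infinity'' case. A sequence escaping to zero with $\int g\,\d\nu_n$ near $\gamma$ might have $\int f\,\d\nu_n\to-\infty$, and then the lower bound fails. To handle this, I would first try to combine a sequence realizing $P^\Phi_\infty(f)$ (if that is finite) with one realizing $g$-averages near $\beta_\infty(g)$, using convex combinations and then Theorem~\ref{pressure_density_susp} to restore ergodicity, as in the proof of Proposition~\ref{der}. If $P_\infty^\Phi(f)=-\infty$, then the function is identically $-\infty$. In that case continuity should be read in the extended sense, or it is excluded by the standing hypothesis $I_\infty>-\infty$ used in Theorem~\ref{thm:ldp}.
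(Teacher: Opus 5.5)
For $I^{f,g}$ your argument is the paper's proof. For fixed $q\ge0$ the map $\gamma\mapsto P^\Phi(f+qg)-q\gamma$ is affine, so $I^{f,g}$ is concave. Taking $q=0$ bounds it above by $P^\Phi(f)=0$, and one measure with $\int g\,\d\nu>\gamma$ bounds it below. The step you leave as ``presumably'' is left implicit in the paper too, and it is easy to fill. Pick an ergodic component with $\int g>\gamma$, take a long generic orbit segment, and close it up to a periodic-orbit measure. Summable variations of $\Delta_g$ and $\tau$ keep its $g$-average above $\gamma$. The measure charges only finitely many $2$-cylinders, on which $\Delta_f$ and $\tau$ are bounded, so its free energy is finite.

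For $I^{f,g}_\infty$, the step you single out is a genuine gap, and it is exactly what the paper's ``the same argument'' skips. There is no single measure to use, and every escaping sequence with $\liminf_n\int g\,\d\nu_n>\gamma$ may have free energy tending to $-\infty$. Your repair cannot work. To keep the $g$-average of $\theta\nu^{(2)}_n+(1-\theta)\nu^{(1)}_n$ above $\gamma$ you need $\theta$ bounded away from $0$, and then the affine free energy still tends to $-\infty$. Your fallback only covers the case $P^\Phi_\infty(f)=-\infty$.

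In fact the second assertion fails in general. Take $\tau\equiv1$ and a loop system with a vertex $o$ carrying a self-loop and three chains $o\to a_n\to a_{n-1}\to\cdots\to a_1\to o$ (likewise with $b_n$, $c_n$), $n\ge1$.
\begin{itemize}
\item Let $\Delta_g$ be $0$ on $[o]$ and $1$, $2$, $\tfrac{3}{2}$ on the $a$-, $b$-, $c$-vertices respectively.
\item Let $\Delta_f$ be $-\log k$ on $[b_k]$ and $0$ elsewhere, minus the constant $c_0>0$ that makes $P^\Phi(f)=0$.
\end{itemize}
Long $a$-loops and long $b$-loops give $\alpha_\infty(g)=1$ and $\beta_\infty(g)=2$. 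Decompose into first-return loops to $[o]$ and apply Kac's formula. For $q\ge0$, every invariant $\mu$ satisfies $\int(\Delta_f+q\Delta_g)\,\d\mu\le\tfrac{3}{2}q+K_q\,\mu([o])-c_0$, where $K_q=\sup_{m\ge0}(\tfrac{q}{2}m-\log m!)<\infty$. Since $h_\infty(\Phi)=0$, and long $c$-loops give the reverse inequality, $P^\Phi_\infty(f+qg)=\tfrac{3}{2}q-c_0$ for $q\ge0$. Hence $I^{f,g}_\infty=-c_0$ on $(1,\tfrac{3}{2}]$ and $I^{f,g}_\infty=-\infty$ on $(\tfrac{3}{2},2)$. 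That is a jump inside $(\alpha_\infty(g),\beta_\infty(g))$, even though $f$ is SPR here.

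What the concavity argument does prove, when $P^\Phi_\infty(f)>-\infty$, is finiteness and continuity on $(-\infty,s)$, where $s:=\lim_{q\to\infty}P^\Phi_\infty(f+qg)/q\le\beta_\infty(g)$. One has $s=\beta_\infty(g)$ as soon as $f$ is bounded below, since every escaping sequence then has free energy at least $\inf f$; this is the setting of Lemma~\ref{lem:asymp}. By monotonicity in $\gamma$, the hypothesis $I^{f,g}_\infty(\gamma)>-\infty$ in Theorem~\ref{thm:ldp}(b) gives finiteness on $(-\infty,\gamma]$, hence continuity on $(-\infty,\gamma)$.
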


\begin{proof}
For every fixed $q\geq 0$ we have,
$ P^\Phi(f+q(g-\gamma)) =
P^\Phi(f+qg)-q\gamma,$
which is affine as a function of $\gamma$. Therefore,
\[
I^{f,g}(\gamma)= \inf_{q\geq 0}\bigl(P^\Phi(f+qg)-q\gamma\bigr)
\]
is concave.  It remains to check that this function is finite on the stated interval. Let
$\gamma\in(\alpha(g),\beta(g))$. Since $\gamma<\beta(g)$, choose an invariant measure $\nu_+$ such that $\int g\,d\nu_+>\gamma.$ By the variational principle,
\[
P^\Phi(f+qg)-q\gamma
\geq
h_{\nu_+}(\Phi)+\int f\,d\nu_+
+
q\left(\int g\,d\nu_+-\gamma\right).
\]
For $q\geq 0$, the last term is nonnegative. Hence, $I^{f,g}$ is bounded from below at $\gamma$. Since taking $q=0$ gives $I^{f,g}(\gamma)\leq P^\Phi(f)$,
we obtain $-\infty<I^{f,g}(\gamma)<+\infty$.
Thus, $I^{f,g}$ is a finite concave function on the open interval
$(\alpha(g),\beta(g))$. Therefore, it is continuous.

The same argument, replacing $P^\Phi$, $\beta(g)$ and $\alpha(g)$ by $P^\Phi_\infty$, $\beta_\infty(g)$ and $\alpha_\infty(g)$,  gives the continuity of $I^{f,g}_\infty$  on
$(\alpha_\infty(g),\beta_\infty(g))$.
\end{proof}

\begin{theorem}\label{RATEVARIATION} Let $f, g$ be continuous functions defined on $Y$. Assume that $\Delta_f,\Delta_g$ are of summable variations and that $\sup_{x\in[b]}|\Delta_f(x)|$, $\sup_{x\in[b]}|\Delta_g(x)|$ are finite for every $b\in\N$. Suppose that $P^\Phi(f)=0$ and $g$ is bounded. The following statements hold:
\begin{enumerate}
\item  If $\gamma\in (\alpha(g),\beta(g))$, then 
$$I^{f,g}(\gamma)=\sup\bigg\{h_{\nu}(\Phi)+\int f \, \d\nu:\int g \, \d\nu\ge \gamma\bigg\},$$
where the supremum runs over all ergodic and  compactly supported measures.

\item If $\gamma\in(\alpha_\infty(g),\beta_\infty(g))$, then
$$
I^{f,g}_\infty(\gamma)=\sup_{(\nu_n)_n\to0}\left\{\limsup_{n\to\infty}\left(
h_{\nu_n}(\Phi)+\int f\,d\nu_n\right):\liminf_{n\to\infty}\int g\,d\nu_n\geq\gamma
\right\},
$$
where the supremum runs over all sequences $(\nu_n)_n$ of ergodic
and compactly supported measures converging to the zero measure in
the cylinder topology.
\end{enumerate}
\end{theorem}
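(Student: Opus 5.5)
Both parts are a Lagrange-duality (minimax) statement: the infimum over $q\ge 0$ of the pressure of $f+q(g-\gamma)$ should equal the constrained supremum of free energies. For each part I will prove the inequality ``$\ge$'' directly and then get ``$\le$'' by analysing where the infimum over $q$ is attained.

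\emph{Part (a), easy inequality.} Take $\nu$ ergodic and compactly supported with $\int g\,\d\nu\ge\gamma$. For every $q\ge0$ the variational principle gives
\[
P^\Phi(f+q(g-\gamma))\ge h_\nu(\Phi)+\int f\,\d\nu+q\Big(\int g\,\d\nu-\gamma\Big)\ge h_\nu(\Phi)+\int f\,\d\nu.
\]
Taking the infimum over $q$ and then the supremum over $\nu$ gives $I^{f,g}(\gamma)\ge\sup\{\cdots\}$.

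\emph{Part (a), reverse inequality.} Set $Q(q)=P^\Phi(f+qg)-q\gamma$. This is convex and finite on $[0,\infty)$: $g$ is bounded, so $Q(q)\le P^\Phi(f)+q(\sup g-\gamma)<\infty$. Since $\gamma<\beta(g)$, the lower bound used in Lemma~\ref{continuity} gives $Q(q)\to\infty$ as $q\to\infty$. Hence the infimum is attained at some $q_0\in[0,\infty)$, and I split into two cases.
\begin{itemize}
\item If $q_0>0$, then $0\in\partial Q(q_0)$, i.e.\ $\gamma\in\partial_q P^\Phi(f+qg)|_{q=q_0}$. Proposition~\ref{der.flow}(a), applied to $f+q_0g$ (whose pressure is finite near $q_0$), gives ergodic compactly supported $\nu_n$ with $\int g\,\d\nu_n\to\gamma$ and
\[
h_{\nu_n}(\Phi)+\int (f+q_0g)\,\d\nu_n\to P^\Phi(f+q_0g).
\]
Therefore $h_{\nu_n}(\Phi)+\int f\,\d\nu_n\to Q(q_0)=I^{f,g}(\gamma)$.
\item If $q_0=0$, then $Q'_+(0)\ge0$, so the right derivative of $P^\Phi(f+qg)$ at $0$ is at least $\gamma$. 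Proposition~\ref{der.flow}(a), used with $D$ equal to that right derivative, gives $\nu_n$ with free energy tending to $P^\Phi(f)=I^{f,g}(\gamma)$ and $\int g\,\d\nu_n\to D\ge\gamma$.
\end{itemize}
The remaining issue is that the constraint must hold exactly, $\int g\,\d\nu_n\ge\gamma$, not just in the limit. To handle this, I will run the argument with $\gamma+\epsilon$ in place of $\gamma$ (still inside $(\alpha(g),\beta(g))$). This gives measures with $\int g\,\d\nu_n>\gamma$ for large $n$ and free energy close to $I^{f,g}(\gamma+\epsilon)$. Letting $\epsilon\to0$ and using the continuity of $I^{f,g}$ from Lemma~\ref{continuity} finishes part (a).

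\emph{Part (b).} The structure is identical, with $P^\Phi_\infty$ in place of $P^\Phi$.
\begin{itemize}
\item For ``$\ge$'', take a sequence $\nu_n\to0$ of ergodic compactly supported measures with $\liminf\int g\,\d\nu_n\ge\gamma$. Theorem~\ref{prin_var_flow_infty}, applied to the potential $f+q(g-\gamma)$, gives
\[
P^\Phi_\infty(f+q(g-\gamma))\ge\limsup_n\Big(h_{\nu_n}(\Phi)+\int f\,\d\nu_n+q\Big(\int g\,\d\nu_n-\gamma\Big)\Big)\ge\limsup_n\Big(h_{\nu_n}(\Phi)+\int f\,\d\nu_n\Big).
\]
The last step uses $\liminf_n\big(\int g\,\d\nu_n-\gamma\big)\ge0$.
\item For ``$\le$'', the function $Q_\infty(q)=P^\Phi_\infty(f+qg)-q\gamma$ is convex and finite. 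It tends to $+\infty$ because $\gamma<\beta_\infty(g)$, by Lemma~\ref{lem:asymp}. So it has a minimiser $q_0\ge0$. The subgradient condition at $q_0$, combined with Proposition~\ref{der.flow}(b), produces escaping ergodic compactly supported sequences with $\int g\,\d\nu_n\to D\ge\gamma$ and free energy converging to $I^{f,g}_\infty(\gamma)$.
\end{itemize}
Because the constraint here is only a $\liminf$, no $\epsilon$-perturbation is needed in part (b).

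The main obstacle is technical: applying Proposition~\ref{der.flow} at $q_0\neq0$ requires its hypotheses for the potential $f+q_0g$ and a neighbourhood where the relevant pressure is finite. Finiteness follows from boundedness of $g$, but this needs to be checked. In part (b) I also need the ``moreover'' clause (ergodic, compactly supported) of Proposition~\ref{der.flow}(b); its proof relies on Theorem~\ref{pressure_density_susp}, which assumes finite entropy. I would first try to secure that by noting that free energies near $P_\infty$ are finite, so the approximating measures can be taken with finite entropy.
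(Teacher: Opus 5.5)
Your argument is the paper's: the inequality $\ge$ from the variational principle (Theorem~\ref{prin_var_flow_infty} in (b)), and $\le$ from a minimiser $q_0$ of $q\mapsto P^\Phi(f+qg)-q\gamma$ together with the subgradient characterisation of Proposition~\ref{der.flow}, split into $q_0>0$ and $q_0=0$. In (a), your $\epsilon$-shift improves on the paper. The paper's proof only produces $\int g\,d\nu_n\to\gamma$ and passes over the exact constraint $\int g\,d\nu\ge\gamma$ required of single measures. Working at $\gamma+\epsilon$ and using the continuity of $I^{f,g}$ (Lemma~\ref{continuity}) secures that constraint without giving up ergodicity. Your finite-entropy worry is harmless: if $\int f\,d\nu>-\infty$ then $h_\nu(\Phi)\le P^\Phi(f+qg)-\int(f+qg)\,d\nu<\infty$.

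The step that fails is in (b): the claim that $Q_\infty(q)=P^\Phi_\infty(f+qg)-q\gamma$ is finite and tends to $+\infty$ ``by Lemma~\ref{lem:asymp}''. That lemma assumes $f$ bounded, which is not a hypothesis here, and the conclusion can really fail.

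\begin{itemize}
\item $P^\Phi_\infty(f)$ may equal $-\infty$, in which case $Q_\infty\equiv-\infty$.
\item Even when $P^\Phi_\infty(f)$ is finite, the escaping sequences that push $\int g$ toward $\beta_\infty(g)$ can be forced into regions where $f\to-\infty$. For instance, glue two copies of the random walk of Example~\ref{rw} at a vertex and take $\tau\equiv1$. Let $\Delta_g$ be the indicator of the second copy, and put $\Delta_f=-n$ on its $n$-th vertex and $0$ elsewhere, minus $P(f)$. Then $q\mapsto P^\Phi_\infty(f+qg)$ is constant on $[0,\infty)$, while $(\alpha_\infty(g),\beta_\infty(g))=(0,1)$. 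So $I^{f,g}_\infty(\gamma)=-\infty$ on that whole interval, which is exactly the situation of Theorem~\ref{thm:ldp}(c).
\end{itemize}

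Part (a) avoids this because a single compactly supported $\nu_+$ with $\int g\,d\nu_+>\gamma$ has finite free energy (since $\sup_{[b]}|\Delta_f|<\infty$). Escaping sequences, by contrast, must leave every finite set of symbols.

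The repair is short.
\begin{itemize}
\item If $I^{f,g}_\infty(\gamma)=-\infty$, the inequality you already proved gives equality; the paper treats this case separately.
\item If $I^{f,g}_\infty(\gamma)$ is finite and the minimum is attained, your argument applies as written.
\item If it is finite but not attained, then $A_\infty(q):=C_\infty(q)-q\gamma$, with $C_\infty(q):=P^\Phi_\infty(f+qg)$, is strictly decreasing and $(A_\infty)'_+(q)\uparrow0$. Choose $q_k\to\infty$ and $D_k:=(C_\infty)'_+(q_k)\uparrow\gamma$. Proposition~\ref{der.flow}(b) applied to $f+q_kg$ gives escaping ergodic compactly supported measures with $\int g\to D_k$ and free energy of $f$ tending to $C_\infty(q_k)-q_kD_k=A_\infty(q_k)+q_k(\gamma-D_k)\ge I^{f,g}_\infty(\gamma)$. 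A diagonal choice then yields an escaping sequence with $\int g\to\gamma$ and $\limsup$ of free energies at least $I^{f,g}_\infty(\gamma)$.
\end{itemize}
The paper's own proof writes out only the attained case, so this last step is missing there as well.
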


\begin{proof} We begin with the proof of (a). Let $\nu \in \M_{\Phi}$ be such that $\int g \, \d\nu \ge \gamma.$ Note that for every $q \geq 0$ we have
 \begin{align*}
        P^\Phi(f+q(g-\gamma))&\ge  h_{\nu}(\Phi)+\int (f+q(g-\gamma)) \,\d\nu 
        \ge  h_{\nu}(\Phi)+\int f \,\d\nu.
    \end{align*}
In particular
\begin{equation*}
I^{f,g}(\gamma)\ge\sup_{\nu \in \M_{\Phi}}\bigg\{ h_{\nu}(\Phi)+\int f \,\d\nu : \int g \, \d\nu \ge \gamma\bigg\}.
\end{equation*}
In order to prove the opposite inequality, define 
 $A(q)=P^\Phi(f+q(g-\gamma))$ and $C(q)=P^\Phi(f+q g)$.   Since $A$ is continuous  and $\lim_{q\to\infty}\frac{A(q)}{q}=\beta(g)-\gamma>0$, there exists $q_0\in [0,\infty)$ such that  $A(q_0)=I^{f,g}(\gamma)$. Indeed, the function $A(q)$ has a positive asymptotic slope, thus its infimum must be attained.  We consider  two cases, depending on whether $q_0 = 0$ or $q_0 > 0$. 
 
 Assume that $q_0>0$, in this case, we have $A'_{-}(q_0)\le 0\le A'_+(q_0)$, and therefore $C'_{-}(q_0)\le \gamma\le C'_+(q_0)$.  It follows from Proposition \ref{der.flow}    
     that there exists a sequence $(\nu_n)_n$ of ergodic and compactly supported measures in $\M_{\Phi}$ such that $\lim_{n\to\infty}\int g \, \d\nu_n=\gamma$ and 
    $$\lim_{n\to\infty} \left(h_{\nu_n}(\Phi)+\int (f + q_0(g-\gamma)) \,\d\nu_n \right)= I^{f,g}(\gamma).$$ Therefore, 
        $$\lim_{n\to\infty} \left( h_{\nu_n}(\Phi)+\int f \, \d\nu_n \right)= I^{f,g}(\gamma),$$ which proves the formula.

    Consider now the case in which $q_0=0$. We have  $A'_{+}(0)\ge 0$, and therefore $C'_{+}(0)\ge \gamma$. In particular, there exists a sequence $(\nu_n)_n$ of ergodic and compactly supported measures in $\M_{\Phi}$ such that $\lim_{n\to\infty} \left(h_{\nu_n}(\Phi)+\int f \,\d\nu_n\right)= P^\Phi(f)$ and $\liminf_{n\to\infty}\int  g \,  \d\nu_n\ge \gamma$. We conclude that 
    $$\sup_{(\nu_n)_n} \left\{\limsup_{n\to\infty} \left( h_{\nu_n}(\Phi)+\int f \, \d\nu_n \right): \liminf_{n\to\infty}\int g \, \d\nu_n\ge \gamma \right\}=P^\Phi(f).$$
    Since in this case $I^{f,g}(\gamma)=P^\Phi(f)$, we obtain the desired formula.\\

We now prove part (b). Let $(\nu_n)_n$ be a sequence of compactly supported
ergodic measures such that $\nu_n\to0$ and
$\liminf_{n\to\infty} \int g\,\d\nu_n\geq\gamma$.
Fix $q\geq0$. For every $\epsilon>0$, we have
$ \int g\,\d\nu_n-\gamma\geq-\epsilon$ for all sufficiently
large $n$. Hence, by the variational principle at infinity,
\[
\begin{aligned}
    P^\Phi_\infty\bigl(f+q(g-\gamma)\bigr)
    &\geq
    \limsup_{n\to\infty}
    \left(
        h_{\nu_n}(\Phi)+\int f\,\d\nu_n
        +q\left(\mathcal \int g\,d\nu_n-\gamma\right)
    \right)                                      \\
    &\geq
    \limsup_{n\to\infty}  \left(h_{\nu_n}(\Phi)+\int f\,\d\nu_n -q\epsilon \right).
\end{aligned}
\]
Letting $\epsilon\to0$ and then taking the infimum over
$q\geq0$, we obtain
\[
I^{f,g}_\infty(\gamma) \geq \sup_{(\nu_n)_n\to0} \left\{ \limsup_{n\to\infty}  \left(h_{\nu_n}(\Phi)+\int f\,\d\nu_n \right): \liminf_{n\to\infty}\int g\,\d\nu_n \geq\gamma \right\}.
\]
We prove the reverse inequality. Set $C_\infty(q):=P^\Phi_\infty(f+qg)$ and $A_\infty(q):=C_\infty(q)-q\gamma$. Thus $I^{f,g}_\infty(\gamma)=  \inf_{q\geq0}A_\infty(q)$.
If $I^{f,g}_\infty(\gamma)=-\infty$, the equality follows immediately from the inequality already proved. We therefore assume that $I^{f,g}_\infty(\gamma)>-\infty$. Since
\[
    I^{f,g}_\infty(\gamma)  \leq A_\infty(0) =P^\Phi_\infty(f)  \leq P^\Phi(f)=0,
\]
the number $P^\Phi_\infty(f)$ is finite. Since $g$ is bounded, $C_\infty$, and hence $A_\infty$, is a finite continuous convex function.

Suppose first that $A_\infty$ reaches its minimum at some $q_0\geq0$. If $q_0>0$, then $0\in\partial A_\infty(q_0)$, and hence $\gamma\in\partial C_\infty(q_0)$. Applying Proposition~\ref{der.flow} to the potential $f+q_0g$ and the observable $g$, we obtain a sequence $(\nu_n)_n$ of ergodic,
compactly supported measures such that $\nu_n\to0,
\int g\,\d\nu_n\to\gamma$
and
\[
    h_{\nu_n}(\Phi)+\int(f+q_0g)\,d\nu_n \rightarrow C_\infty(q_0).
\]
Therefore,
\[
\begin{aligned}
    h_{\nu_n}(\Phi)+\int f\,\d\nu_n
    =
    h_{\nu_n}(\Phi)+\int(f+q_0g)\,d\nu_n
    -q_0 \int g \, \d \nu_n                        
    \longrightarrow
    C_\infty(q_0)-q_0\gamma
    =
    A_\infty(q_0)
    =
    I^{f,g}_\infty(\gamma).
\end{aligned}
\]
Since $\int g\,\d\nu_n\to\gamma$, this sequence satisfies the required liminf constraint.

Suppose now that $q_0=0$. Since $0$ minimizes $A_\infty$ on $[0,\infty)$, we have $(A_\infty)'_+(0)\geq0$. Therefore, setting $D:=(C_\infty)'_+(0) =\gamma+(A_\infty)'_+(0)$, we have $D\geq\gamma$ and
$D\in\partial C_\infty(0)$.  The Proposition~\ref{der.flow} gives a
sequence $(\nu_n)_n$ of compactly supported ergodic measures such
that $\nu_n\to0$, $\int g\,\d\nu_n\to D\geq\gamma$,
and
\[
   h_{\nu_n}(\Phi)+\int f\,d\nu_n
    \longrightarrow C_\infty(0)=A_\infty(0) =I^{f,g}_\infty(\gamma).
\]
This again gives the desired sequence.
\end{proof}

\begin{corollary}\label{awayfromzero}Let $f, g$ be continuous functions defined on $Y$ with $\sup f < \infty$. Assume that $\Delta_f,\Delta_g$ are of summable variations and that $\sup_{x\in[b]}|\Delta_f(x)|$, $\sup_{x\in[b]}|\Delta_g(x)|$ are finite for every $b\in\N$. Suppose that $g$ is bounded, $P^\Phi(f)=0$ and $P^{\Phi}_{\infty}(f)<0$. 
\begin{enumerate}
\item Suppose that $s_\infty(f)<1$. Let $\nu_f$ be the equilibrium state of $f$. If $\gamma\in \left(\int g \, \d\nu_{f},\beta(g)\right)$, then $I^{f,g}(\gamma)<0$. 
\item If $\gamma<\beta_{\infty}(g)$ then $I^{f,g}_\infty(\gamma)<0$.
\end{enumerate}
\end{corollary}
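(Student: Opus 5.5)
For part (a), I will argue by contradiction using the variational formula of Theorem~\ref{RATEVARIATION}(a) together with Theorem~\ref{thm:spr_deri}. The hypotheses $P^\Phi_\infty(f)<0=P^\Phi(f)$ say exactly that $f$ is SPR. Together with $\sup f<\infty$ and $s_\infty(f)<1$, Theorem~\ref{thm:spr_deri} gives the unique equilibrium state $\nu_f$. It also says that any sequence $(\nu_n)_n$ with $h_{\nu_n}(\Phi)+\int f\,\d\nu_n\to 0$ converges weak$^*$ to $\nu_f$.

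Suppose $I^{f,g}(\gamma)=0$ for some $\gamma\in(\int g\,\d\nu_f,\beta(g))$. Since $\alpha(g)\le\int g\,\d\nu_f<\gamma<\beta(g)$, Theorem~\ref{RATEVARIATION}(a) applies. It produces ergodic, compactly supported measures $\nu_n$ with $\int g\,\d\nu_n\ge\gamma$ and $h_{\nu_n}(\Phi)+\int f\,\d\nu_n\to 0$. Then $\nu_n\to\nu_f$ weak$^*$. Because $g$ is bounded and continuous, $\int g\,\d\nu_n\to\int g\,\d\nu_f<\gamma$, which is a contradiction. Since $I^{f,g}(\gamma)\le P^\Phi(f)=0$ by taking $q=0$, we conclude $I^{f,g}(\gamma)<0$.

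A slicker route, which I would also mention, uses the differentiability in Theorem~\ref{thm:spr_deri}. The function $q\mapsto P^\Phi(f+q(g-\gamma))$ has derivative $\int g\,\d\nu_f-\gamma<0$ at $q=0$. Hence it is negative for small $q>0$, and so $I^{f,g}(\gamma)<0$ directly. I would present this argument, since it avoids the variational formula entirely.

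For part (b), set $A_\infty(q)=P^\Phi_\infty(f+q(g-\gamma))$. Then $A_\infty(0)=P^\Phi_\infty(f)<0$, so $I^{f,g}_\infty(\gamma)\le A_\infty(0)<0$ immediately, and the hypothesis $\gamma<\beta_\infty(g)$ is not even needed for the inequality. I would remark that this hypothesis only matters for finiteness and for the formula in Theorem~\ref{RATEVARIATION}(b). The main, and minor, point of care is in part (a): verifying that Theorem~\ref{thm:spr_deri} applies, namely that $\sup f<\infty$ and $P^\Phi(f)=0<\infty$ hold and that the SPR condition is exactly $P^\Phi_\infty(f)<0$.
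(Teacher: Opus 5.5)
Your proposal is correct, and the version you say you would present is essentially the paper's argument. For (a), the paper also uses the differentiability in Theorem~\ref{thm:spr_deri} to get the negative slope $\int g\,\d\nu_f-\gamma$ of $q\mapsto P^\Phi(f+q(g-\gamma))$ at $q=0$; it additionally passes through the minimizer $q_0>0$ from the proof of Theorem~\ref{RATEVARIATION}, which your direct argument shows is unnecessary. For (b), both rest on $I^{f,g}_\infty(\gamma)\le P^\Phi_\infty(f)<0$, and your contradiction alternative for (a), via Theorem~\ref{RATEVARIATION}(a) and weak$^*$ convergence to $\nu_f$, is also valid.
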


\begin{proof} We prove each claim separately.
 \begin{enumerate}
\item By the proof of Theorem \ref{RATEVARIATION}, there exists $q_0 \geq 0$ such that $P^\Phi(f+q_0(g-\gamma))=I^{f,g}(\gamma)$. Since $\gamma\in \left(\int g \, \d\nu_{f},\beta(g)\right)$ we have that the right derivative of $P^\Phi(f+q(g-\gamma))$ at $q=0$ is negative (see Theorem \ref{thm:spr_deri}) and since the asymptotic slope is positive we have $q_0>0$.
Since  $P^\Phi(f)=0$ we obtain  $I^{f,g}(\gamma)=P^\Phi(f+q_0(g-\gamma))<0$. 
\item Immediate consequence of Theorem \ref{RATEVARIATION}(b) combined with the variational principle at infinity (see Theorem \ref{prin_var_flow_infty}). Note that $I^{f,g}_\infty(\gamma)\leq P_{\infty}^{\Phi}(f)$.
\end{enumerate}
\end{proof}

\begin{remark}\label{rem:sharp}
The assumption that $P^{\Phi}_{\infty}(f)<P^{\Phi}(f)$ is necessary for Corollary \ref{awayfromzero}(a) to hold. Given that $P^{\Phi}_{\infty}(f)=P^{\Phi}(f)=0$ we can take $i\in \N$ and consider  $g=-\chi_{([i]\times \R)\cap Y_0}$. Note that $\int g\,\d\nu_f<0$. We now take $\gamma\in \left(\int g\,\d\nu_f,0\right)$ and since $P^{\Phi}_{\infty}(f)=0$ we can find a sequence of compactly supported ergodic measures $(\nu_n)_n$ such that $\lim_{n\to\infty}\int g\,\d\nu_n=0>\gamma$ and $\lim_{n\to\infty} (h_{\nu_n}(\Phi)+\int f\,\d\nu_n)=0$. Therefore $I^{f,g}(\gamma)=0$  for all $\gamma\in \left(\int g\,\d\nu_f,0\right)$ by Theorem \ref{RATEVARIATION}$(a)$.
\end{remark}

\section{Large Deviations} \label{sec:ldp}
In this section, we prove level-1 large deviation results for suspension flows over countable Markov shifts, extending classical estimates from the compact setting to this non-compact framework. Furthermore, we obtain a large deviation principle at infinity by considering sets of points that diverge on average at some given scale. For these diverging trajectories, the associated rate function is governed by the pressure at infinity rather than the classical topological pressure.

Let $(\Sigma,\sigma)$ be a topologically mixing countable Markov shift. Consider a roof function  $\tau:\Sigma\to\R$ which is bounded away from zero, of summable variations and such that $\sup_{x\in[b]}\tau(x)< \infty$ for all $b\in\N$. Let $(Y,\Phi)$ be the associated suspension flow. Recall that $Y_0= \{ (x,t)\in \Sigma  \times \R \colon 0 \le t <\tau(x)\}$.  For a fixed $m\in\N$ recall we set  $\tau_m=\sup\{\tau(x):x_1\leq m\}$.

\begin{definition} \label{def:Rt} Fix $m, Q, M \in\N$.  Set $K_m= \bigcup_{i=1}^m [i]$ and $\widetilde{K}_m=(K_m\times\R)\cap Y_0$. Define $$R_T(m)=\widetilde{K}_m\cap \Phi^{-T} \big(\widetilde{K}_m\big),$$
and 
$$R_T(m;Q,M)=\left\{(x,t)\in R_T(m): \frac{1}{T}\int_0^T \chi_{\widetilde{K}_m}(\Phi_s((x,t)))\,\d s <\frac{1}{Q}\right\}.$$
For a continuous function $g:Y\to\R$ and $\gamma\in\R$ we define
$$Y^g_{T}(\gamma)=\bigg\{(x,t)\in R_T(m): \frac{1}{T}\int_0^T g(\Phi_s((x,t))\,d s> \gamma \bigg\},$$
and
$$Y^g_{T}(\gamma;Q,M)=\bigg\{(x,t)\in R_T(m; Q,M): \frac{1}{T}\int_0^T g(\Phi_s((x,t))\,\d s> \gamma \bigg\}.$$
\end{definition}

\begin{remark}\label{rem:n_T} Let $(x,t)\in R_T(m)$, where $x=(x_1,x_2,\ldots)$. Define $n_T(x)\in\N$ such that $\Phi_T((x,t))\in([x_{n_T(x)}]\times \R)\cap Y_0$. Note that $n_T(x)-1$ is the number of times the orbit of $(x,t)$ returns to the cross section $\Sigma\times\{0\}$ up to time $T$. Moreover, $x_1,x_{n_T(x)}\in\{1,\ldots,m\}$ and $$|S_{n_T(x)}\tau(x)-T|\le 2\tau_m.$$
Similarly, $$\left|\int_0^Tg(\Phi_s((x,t))\,\d s-S_{n_T(x)}\Delta_g(x)\right|\le 2\tau_m\|g\|_0,$$
for every $g:Y\to\R$ bounded.
\end{remark}

The proof of Theorem \ref{thm:ldp} is organized as follows.  In Section \ref{sec:upperbound} we establish the large deviation upper bounds for parts (a) and (b). Then, in Section \ref{sec:lowerbound}, we establish the large deviation lower bounds and complete the proof. For the remainder of this section we assume the hypotheses of Theorem \ref{thm:ldp} regarding  $(\Sigma,\sigma)$, $(Y,\Phi)$ and  the functions $f,g, \tau$.

\subsection{Proof of the upper bounds}\label{sec:upperbound} We start with a simple result.

\begin{lemma}\label{lem:boundupp}There exist $N\in\N$ and $C>0$ such that for all $x=(x_1,x_2,\ldots)\in\Sigma$ with $x_1,x_n\in\{1,\ldots,m\}$, there exist $\omega_1,\omega_2\in \N^N$ such that $(1,\omega_1,x_1,\ldots,x_n,\omega_2,1)$ is admissible and for all $z\in [1,\omega_1,x_1,\ldots,x_{n},\omega_2,1]$ we have that 
$$\max\{|S_{n+2N+1}\tau(z)-S_{n}\tau(x)|,
|S_{n+2N+1}\Delta_{h}(z)- S_{n}\Delta_{h}(x)|\}\le C,$$
where $h\in\{f,g\}$.
\end{lemma}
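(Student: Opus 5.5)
The plan is to use topological mixing to choose the connecting words once, for each of the finitely many symbols in $\{1,\ldots,m\}$, and then to control the Birkhoff sums with summable variations together with boundedness on the finitely many relevant cylinders. Since $(\Sigma,\sigma)$ is topologically mixing, for each pair $a,b\in\{1,\ldots,m\}\cup\{1\}$ there is $N(a,b)$ such that admissible words of every length $n\ge N(a,b)$ from $a$ to $b$ exist. Let $N$ be at least the maximum of these finitely many numbers, adjusted so that a common length works. Concretely, for each $j\in\{1,\ldots,m\}$ fix a word $\omega_1(j)\in\N^N$ such that $1\,\omega_1(j)\,j$ is admissible, and a word $\omega_2(j)\in\N^N$ such that $j\,\omega_2(j)\,1$ is admissible. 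Then, given $x$ with $x_1,x_n\le m$, set $\omega_1=\omega_1(x_1)$ and $\omega_2=\omega_2(x_n)$, so that $(1,\omega_1,x_1,\ldots,x_n,\omega_2,1)$ is admissible. Only finitely many words are ever used, so they involve only a finite set $\mathcal C$ of symbols.

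Next, take $z$ in the cylinder $[1,\omega_1,x_1,\ldots,x_n,\omega_2,1]$ and split $S_{n+2N+1}\psi(z)$, for $\psi\in\{\tau,\Delta_f,\Delta_g\}$, into three pieces.
\begin{enumerate}
\item The first $N+1$ terms, evaluated at $z,\ldots,\sigma^N z$. Each of these points lies in a cylinder $[c]$ with $c\in\mathcal C$.
\item The middle $n$ terms, evaluated at $\sigma^{N+1+k}z$ for $0\le k\le n-1$.
\item The remaining $N$ terms, evaluated at points whose first symbol lies in $\mathcal C$.
\end{enumerate}
The outer pieces are bounded by $(2N+1)\max_{c\in\mathcal C}\sup_{[c]}|\psi|$. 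This is finite by the hypotheses $\sup_{[b]}\tau<\infty$ and $\sup_{[b]}|\Delta_f|<\infty$, and for $\Delta_g$ by $|\Delta_g|\le\|g\|_0\,\tau$.

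For the middle piece, the points $\sigma^{N+1+k}z$ and $\sigma^k x$ agree on their first $n-k$ coordinates. Hence
\[
|\psi(\sigma^{N+1+k}z)-\psi(\sigma^k x)|\le V_{n-k}(\psi).
\]
Summing over $k$ gives a total error of at most $\sum_{\ell\ge1}V_\ell(\psi)$. The summable variations hypothesis only controls $\sum_{\ell\ge2}V_\ell(\psi)$, so the term $\ell=1$ (which arises at $k=n-1$) needs separate treatment. That point has first symbol $x_n\le m$, and $\sup_{[x_n]}|\psi|$ is finite by the local boundedness hypotheses. So that term is bounded by $2\max_{j\le m}\sup_{[j]}|\psi|$.

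Finally, take $C$ to be the maximum of these bounds over the three functions. The constant depends only on $m$, the fixed connecting words and $\psi$, not on $x$ or $n$.

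The main obstacle is the bookkeeping in the mixing step: the connecting words must have exactly length $N$ on both sides, uniformly in $x_1$ and $x_n$. Mixing provides words of every length beyond a threshold, and since only finitely many endpoint pairs occur, taking $N$ to be the maximal threshold resolves this.
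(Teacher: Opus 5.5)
Your proof is correct and follows the same route as the paper: topological mixing gives connecting words of a common length $N$ that use finitely many symbols, and local boundedness on those finitely many cylinders controls the $2N+1$ extra terms. It is more complete than the paper's short proof in three respects. You bound the middle-block discrepancy by $\sum_{\ell\ge 2}V_\ell(\psi)$, handling the $V_1$ term through local boundedness at $x_n$. You derive $\sup_{[b]}|\Delta_g|\le \|g\|_0\sup_{[b]}\tau<\infty$ rather than assuming it. By contrast, the paper estimates only the added terms, and does so with $\tau_m$ rather than the supremum of $\tau$ over the connecting symbols.
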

\begin{proof} Since $(\Sigma,\sigma)$ is topologically mixing, there exists $N$ such that for every $i,j\in\{1,\ldots,m\}$, there are admissible words $w_1,w_2$ of length $N$ such that  $(1,w_1,i)$ and $(j,w_2,1)$ are admissible. Since $i,j$ runs over a finite set, the collection of words $\{\omega_1,\omega_2\}$ is finite, and therefore they use only finitely many letters in the alphabet, say $\{1,\ldots,r\}$. Let  $C_1=\sup_{x\in K_r}\Delta_g(x)$. In particular, if $(x,t)\in R_T(m)$, then $x_1,x_{n_T(x)}\in \{1,\ldots,m\}$, and therefore there exists a non-empty cylinder $[1,\omega_1,x_1,\ldots,x_{n_T(x)},\omega_2,1]$, where $\omega_1,\omega_2\in\N^N$. Let $z\in [1,\omega_1,x_1,\ldots,x_{n_T(x)},\omega_2,1]$. Furthermore, $$|S_{n_T(x)+2N+1}\tau(z)- S_{n_T(x)}\tau(x)|\le (2N+1)\tau_m,$$
and $$|S_{n_T(x)+2N+1}\Delta_{g}(z)- S_{n_T(x)}\Delta_{g}(x)|\le (2N+1)C_1.$$
Finally, take $C=(2N+1)\max\{C_1,\tau_m\}.$
\end{proof}

Before explaining the covering argument we make some general observations. Fix $m\in\N$ and $(x,t)\in R_T(m)$. If follows by Remark \ref{rem:n_T} that 
\begin{align}\label{eq:up1} S_{n_T(x)}(\Delta_g-\gamma\tau)(x)\ge \int_0^T g(\Phi_s((x,t))\,\d s-\gamma T- 2\tau_m(\|g\|_0+|\gamma|).\end{align}
 By Lemma \ref{lem:boundupp}, there exists a periodic point $$z_x:=(\overline{1,\omega_1,x_1,\ldots,x_{n_T(x)},\omega_2})\in [1,\omega_1,x_1,\ldots,x_{n_T(x)},\omega_2,1]$$  such that for some $C>0$ we have that \begin{align}\label{eq:up2}
     S_{n_T(x)+2N+1}(\Delta_g-\gamma\tau)(z_x)\ge S_{n_T(x)}(\Delta_g-\gamma\tau)(x)-C(1+|\gamma|).
 \end{align}

Assume that $(x,t)\in Y^g_{T}(\gamma)\subset R_T(m)$. In particular, $\int_0^T g(\Phi_s((x,t))\, \d s > \gamma T$. It follows by (\ref{eq:up1}) and (\ref{eq:up2}) that if $E:=C(1+|\gamma|)+2\tau_m(\|g\|_0+|\gamma|)$, then 
\begin{align}\label{ineq:z_x}
    e^{S_{n+2N+1}\Delta_f(z_x)}\le e^{qE} e^{S_{n_T(x)+2N+1}(\Delta_f+q(\Delta_g-\gamma \tau))(z_x)},
\end{align}
for every $q\ge 0$.

Define the cylinder  $C_x=[x_1,\ldots,x_{n_T(x)}]$. By the weak Gibbs property we have that $$\mu_{\Delta_f}(C_x)\le D_1e^{S_{n_T(x)}\Delta_f(x)}$$ for some $D_1>0$ (see Lemma \ref{weak_gibbs}). Moreover, by Lemma \ref{lem:boundupp}, there exists $D_2>0$ such that 
\begin{align}\label{eq:up3}\mu_{\Delta_f}(C_x)\le D_2e^{S_{n_T(x)+2N+1}\Delta_f(z_x)}.
\end{align}  
By inequality (\ref{ineq:z_x}) we conclude that for some $D_3>0$ we have that 
\begin{align}\label{eq:up4}\mu_{\Delta_f}(C_x)\le D_3 e^{S_{n_T(x)+2N+1}(\Delta_f+q(\Delta_g-\gamma \tau))(z_x)}.
\end{align}

By construction $(z_x,0)\in Y$ is a periodic point of length $s_x:=S_{n_T(x)+2N+1}\tau(z_x)$. By Lemma \ref{lem:boundupp} and Remark \ref{rem:n_T} we have that 
$$|S_{n_T(x)+2N+1}\tau(z_x)-T|\le |S_{n_T(x)+2N+1}\tau(z_x)-S_{n_T(x)}\tau(x)|+|S_{n_T(x)}\tau(x)-T|\le C+R_m=:C_0,$$
and therefore $$(z_x;s_x)\in \mathrm{FPer}_1(T-C_0,T+C_0).$$

\begin{proof}[Proof of upper bounds for Theorem \ref{thm:ldp}] 
We start with the proof of the upper bound for part (a). 
Note that 
$Y^g_{T}(\gamma)\subset \bigcup_{(x,t)\in Y^g_{T}(\gamma)}(C_x\times [0,\tau_m]),$ 
and therefore, 
\begin{align*}\nu_f(Y^g_{T}(\gamma))&\le \left(\int\tau \, \d\mu_f\right)^{-1}\tau_m\sum_{(x,t)\in Y^g_{T}(\gamma)}\mu_{\Delta_f}(C_x) \\
&\le  \left(\int\tau \, \d\mu_f\right)^{-1} \tau_m D_3 \sum_{(z_x;s_x)\in \mathrm{FPer}_1(T-C_0,T+C_0)} e^{S_{n_T(x)+2N+1}(\Delta_f+q(\Delta_g-\gamma\tau))(z_x)}\\
&=\left(\int\tau \, \d\mu_f\right)^{-1} \tau_m D_3 \sum_{(z_x;s_x)\in \mathrm{FPer}_1(T-C_0,T+C_0)} \exp\bigg(\int_0^{s_x} (f+q(g-\gamma))(\Phi_t(z_x,0)) \,\d t\bigg),
\end{align*}
where we used inequality (\ref{eq:up4}). Taking $\frac{1}{T}\log(\cdot)$ and limsup as $T\to\infty$ we obtain that
$$\limsup_{T\to\infty}\frac{1}{T}\log\nu_f(Y^g_{T}(\gamma))\le P^{\Phi}(f+q(g-\gamma))$$
for every $q\ge 0$. Thus we obtain that
$$\lim_{T\to\infty}\frac{1}{T}\log\nu_f\left( \bigg\{(x,t)\in R_T(m): \frac{1}{T}\int_0^T g(\Phi_s((x,t))\,d s> \gamma \bigg\}\right)\le I^{f,g}(\gamma),$$
which is the upper bound for part (a).

We now turn to part (b). Let $(x,t)\in R_T(m;Q,M)$.  
Note that $s_x\in (T-C_0, T+C_0)$, and that
$$\int_0^{s_x} \chi_{\widetilde{K}}(\Phi_s((z_x,0))) \, \d s <\int_0^T \chi_{\widetilde{K}_M}(\Phi_s((x,t))) \,\d s +2C_0\tau_m.$$
Suppose $T$ is large enough so that $T-C_0>\frac{2}{3}T$. In particular, we have that
\begin{align*}
    \frac{1}{s_x}\int_0^{s_x} \chi_{\widetilde{K}_M}(\Phi_s((z_x,0)))\,\d s&< \frac{3}{2T}\int_0^T \chi_{\widetilde{K}_M}(\Phi_s((x,t)))\,\d s+\frac{3C_0\tau_m}{T}\\
    &<\frac{3}{2Q}+\frac{3C_0\tau_m}{T}
\end{align*}
Define $Q'=\big(\frac{3}{2Q}+\frac{3C_0\tau_m}{T}\big)^{-1}$. If $T$ is large enough we can further assume that $Q'>\frac{Q}{2}$, and therefore $$(z_x;s_x)\in \mathrm{FPer}_1(T-C_0, T+C_0; \frac{Q}{2},M).$$

Observe that 
$$Y^g_{T}(\gamma;Q,M)\subset \bigcup_{(x,t)\in Y^g_{T}(\gamma;Q,M)}(C_x\times [0,\tau_m]),$$
and therefore, 
\begin{align*}\nu_f(Y^g_{T}(\gamma;Q,M))  & \le \left(\int\tau \,\d\mu_f\right)^{-1}\tau_m\sum_{(x,t)\in Y^g_{T}(\gamma;Q,M)}\mu_f(C_x) \\
&\le  \left(\int\tau \, \d\mu_f\right)^{-1} \tau_mD_3 \sum_{(z_x;s_x)\in \mathrm{FPer}_1(T-C_0,T+C_0;{Q}/{2},M)} e^{S_{n_T(x)+2N+1}(\Delta_f+q(\Delta_g-\gamma\tau))(z_x)}\\
&=\left(\int\tau \, \d\mu_f\right)^{-1} \tau_mD_3 \sum_{(z_x;s_x)\in \mathrm{FPer}_1(T-C_0,T+C_0;{Q}/{2},M)} \exp\bigg(\int_0^{s_x} (f+q(g-\gamma))(\Phi_t(z_x,0)) \, \d t\bigg),
\end{align*}
where we used inequality (\ref{eq:up4}). Taking $\frac{1}{T}\log(\cdot)$ and limit as $T\to\infty$ we obtain that
$$\limsup_{T\to\infty}\frac{1}{T}\log\nu_f(Y^g_{T}(\gamma;Q,M))\le P_\infty^{\Phi}(f+q(g-\gamma),1;{Q}/{2},M),$$
for every $q\ge 0$. In particular, it follows that 
$$\limsup_{T\to\infty}\frac{1}{T}\log\nu_f(Y^g_{T}(\gamma;Q,M))\le \inf_{q\ge0}P_\infty^{\Phi}(f+q(g-\gamma),1;{Q}/{2},M).$$ 
Finally, note that 
\begin{equation}\label{upper_infty}
\lim_{Q,M\to\infty}\inf_{q\ge0}P_\infty^{\Phi}(f+q(g-\gamma),1;{Q}/{2},M)=\inf_{q\ge 0}P_\infty^\Phi(f+q(g-\gamma))=I^{f,g}_\infty(\gamma).
\end{equation}
This concludes the proof of the upper bound of  Theorem \ref{thm:ldp} part (b). Part (c) also follows from equation (\ref{upper_infty}).

\end{proof}

\subsection{Proof of the lower bounds}\label{sec:lowerbound}
For a suspension flow it is not always the case that the base being mixing implies that the flow will be mixing. However we will have the following mixing type property which we will be useful for proving the lower bound.

\begin{lemma}\label{mixingcond}
There exists $m\in\N$ and $A\subseteq \{1,\ldots,m\}$ with the following properties:
\begin{enumerate}
\item
For any admissible word $(\omega_1,\ldots,\omega_k)$ where $\omega_1,\omega_k\in A$ there exists $i,j\in A$ such that
$(i,\omega_1,\ldots,\omega_k,j)$ is an admissible word.
\item 
For any $\ell\in\N$ there exist $N\in\N$ such that the following holds: for any $k\in\N$ and admissible word $(\omega_1,\ldots,\omega_{k})\in\{1,\ldots,\ell\}^{k}$, there exists an admissible word $(\eta_1,\ldots,\eta_{k'})$ where $k\le k'\leq k+N$, there exists $1\leq i\leq k'-k+1$ where $(\eta_i,\ldots,\eta_{i+k-1})=(\omega_1,\ldots,\omega_{k})$, and $\eta_1,\eta_{k'}\in A$.  
\item
For any $N\in\N$ there exists $C(N)>0$ such that for any admissible word $(\omega_1,\ldots,\omega_k)$ where $\omega_1,\omega_k\in A$
we have
$$\mu_{\Delta_f}(\{[\omega_1,\ldots,\omega_k,j_1,\ldots,j_N]:j_1,\ldots,j_N\in A\})\geq C(N)\mu_{\Delta_f}([\omega_1,\ldots,\omega_k]).$$
\end{enumerate}
\end{lemma}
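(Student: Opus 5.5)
The simplest choice is $A=\{1\}$ together with a suitable $m$, exploiting topological mixing of $(\Sigma,\sigma)$. I will use $A=\{1\}$ for parts (a) and (b), and then check (c) with the same choice.

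\emph{Part (a).} For a word beginning and ending in $1$, we need $i,j\in A$ with $(i,\omega_1,\dots,\omega_k,j)$ admissible. With $A=\{1\}$ this requires $M(1,1)=1$, which need not hold. So I will instead take $A$ to be the set of symbols of a fixed periodic cycle: by mixing, choose a periodic orbit through $1$, say of word $(1=a_0,a_1,\dots,a_{p-1})$, and let $A=\{a_0,\dots,a_{p-1}\}$. Then for $\omega_1,\omega_k\in A$ the predecessor of $\omega_1$ in the cycle and the successor of $\omega_k$ lie in $A$, which gives (a). Set $m=\max A$.

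\emph{Part (b).} Fix $\ell$. By mixing, for each pair $u,v\in\{1,\dots,\ell\}$ there are connecting words from $1$ to $u$ and from $v$ to $1$. Since there are finitely many pairs, their lengths are bounded by some $N_0$. Given $(\omega_1,\dots,\omega_k)\in\{1,\dots,\ell\}^k$, prepend a word from $1$ to $\omega_1$ and append a word from $\omega_k$ to $1$. This gives $\eta$ with $\eta_1=\eta_{k'}=1\in A$ and $k'\le k+2N_0$, so $N=2N_0$ works.

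\emph{Part (c).} This is the main point. Given $(\omega_1,\dots,\omega_k)$ with $\omega_k\in A$, I will extend it by following the cycle: $j_1,\dots,j_N$ are the next $N$ symbols of the periodic cycle after $\omega_k$, which all lie in $A$. Then $[\omega_1,\dots,\omega_k,j_1,\dots,j_N]$ is one of the cylinders in the union. I will compare its measure with $\mu_{\Delta_f}([\omega_1,\dots,\omega_k])$ via the weak Gibbs property, Lemma~\ref{weak_gibbs}. Both cylinders start with $\omega_1\in A$ and end in a symbol of $A$, and $A$ is finite, so the constant can be taken uniform: $C_A=\max_{a,b\in A}C_{ab}$. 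Choosing a point $x$ in the longer cylinder, which also lies in the shorter one, Lemma~\ref{weak_gibbs} gives
\[
\frac{\mu_{\Delta_f}([\omega,j])}{\mu_{\Delta_f}([\omega])}\ \ge\ C_A^{-2}\exp\bigl(S_N\Delta_f(\sigma^k x)-N P(\Delta_f)\bigr).
\]
Here $\sigma^kx\in[j_1,\dots,j_N]$ lies in the finitely many cylinders $[a]$ with $a\in A$. The hypothesis $\sup_{[b]}|\Delta_f|<\infty$ then gives $S_N\Delta_f(\sigma^kx)\ge -N\max_{a\in A}\sup_{[a]}|\Delta_f|$.

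The one thing to check is that $P(\Delta_f)$ is finite and that $\mu_{\Delta_f}$ is the equilibrium state, which is needed for Lemma~\ref{weak_gibbs}. Since $f$ has the equilibrium state $\nu_f$ with $P^\Phi(f)=0$, the associated base measure $\mu_{\Delta_f}$ is the equilibrium state of $\Delta_f$ with $P(\Delta_f)=0$. This uses the characterization $P^\Phi(f)=\inf\{t:P(\Delta_f-t\tau)\le0\}$ together with Abramov's formula, as in the standard theory. This gives $C(N)=C_A^{-2}e^{-N(\max_A\sup|\Delta_f|)}>0$.
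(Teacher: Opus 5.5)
Your proposal is correct and matches the paper's proof: the paper also takes $A$ to be the set of symbols of an admissible loop $(a_1,\ldots,a_n)$ with $a_1=a_n=1$ and sets $m=\max A$. It then obtains (b) from mixing together with the finiteness of $A$ and $\{1,\ldots,\ell\}$, and (c) from the weak Gibbs property (Lemma~\ref{weak_gibbs}). You additionally supply details the paper leaves implicit: extending the word along the cycle, the uniform constant $\max_{a,b\in A}C_{ab}$, the bound on $S_N\Delta_f$ over the finitely many cylinders $[a]$ with $a\in A$, and the justification that $P(\Delta_f)=0$ with $\mu_{\Delta_f}$ its equilibrium state.
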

\begin{proof}
Let $(a_1,a_2,\ldots,a_n)$ be an admissible word with $a_1=a_n=1$, which exists because the shift space is mixing. For part (a) it is enough to set $A=\{a_i:1\leq i\leq n-1\}$ and $m=\max A$. Part (b)  follows from the fact that $A$ and $\{1,\ldots,\ell\}$ are both finite sets and $(\Sigma,\sigma)$ is topologically mixing property. Part (c) follows by the weak Gibbs property of $\mu_{\Delta_f}$ (see Lemma \ref{weak_gibbs}). 
\end{proof}

The key to proving the lower bound of both part (a) and (b) of Theorem \ref{thm:ldp} is the following proposition. Recall that  $K_m= \bigcup_{i=1}^m [i]$ and $\widetilde{K}_m=(K_m\times\R)\cap Y_0$. 

\begin{proposition}\label{keyprop}
Let $\mu \in \M_{\sigma}$ be an ergodic measure with $\supp(\mu)\subseteq \{1,\ldots,\ell\}^{\N}$ for some $\ell\in\N$. Let $\nu=\mu\times \text{Leb}|_{Y}/(\mu\times \text{Leb})(Y)$ be the associated measure in $Y$. Assume that  $\gamma<\int g \,\d\nu$, and that $\nu(\widetilde{K}_M)<\frac{1}{Q}$. Then, we have that
$$\liminf_{T\to\infty}\frac{1}{T}\log\nu_{f}(Y^g_{T}(\gamma))\geq \liminf_{T\to\infty} \frac{1}{T}\log\nu_{f}(Y^g_{T}(\gamma;Q,M))\geq h_{\nu}(\Phi)+\int f\,\d\nu.$$
\end{proposition}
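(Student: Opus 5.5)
The first inequality is immediate from $Y^g_T(\gamma;Q,M)\subseteq Y^g_T(\gamma)$, so the work is the second. I will use a standard Shannon--McMillan--Breiman / Birkhoff selection of good words for $\mu$, close them into orbit segments that begin and end in the set $A$ of Lemma \ref{mixingcond}, and bound $\nu_f$ of the resulting cylinders from below with the weak Gibbs property (Lemma \ref{weak_gibbs}). Throughout, take $m$ as in Lemma \ref{mixingcond}, and assume $M\ge m$ is large enough that $\{1,\dots,m\}\subseteq\{1,\dots,M\}$ (otherwise adjust notation).

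\textbf{Step 1: good words.} Fix $\epsilon>0$. Since $\mu$ is ergodic and supported on $\{1,\dots,\ell\}^{\N}$, the functions $\tau,\Delta_f,\Delta_g,\chi_{K_M}$ are bounded there. By Birkhoff and Shannon--McMillan--Breiman, for $n$ large there is a set $G_n$ of admissible words $w$ of length $n$ in $\{1,\dots,\ell\}$ with $\#G_n\ge e^{n(h_\mu(\sigma)-\epsilon)}$, and for every $x\in[w]$:
\begin{itemize}
\item $|S_n\tau(x)-n\int\tau\,\d\mu|<\epsilon n$, $\ |S_n\Delta_f(x)-n\int\Delta_f\,\d\mu|<\epsilon n$, $\ |S_n\Delta_g(x)-n\int\Delta_g\,\d\mu|<\epsilon n$;
\item $S_n\chi_{K_M}(x)<n\mu(K_M)+\epsilon n$.
\end{itemize}
Here summable variations handles the dependence on the tail of $x$. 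The choice $\nu(\widetilde K_M)<1/Q$ translates to $\int_0^{\tau}\chi_{\widetilde K_M}$ averaged against $\mu$, normalized by $\int\tau\,\d\mu$, being $<1/Q$. Since $\widetilde K_M$ is a union of full fibres, the flow-time spent in $\widetilde K_M$ is $S_n(\tau\chi_{K_M})$, so I control that Birkhoff sum instead of $S_n\chi_{K_M}$.

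\textbf{Step 2: closing.} By Lemma \ref{mixingcond}(b), each $w\in G_n$ extends to a word $\eta(w)$ of length at most $n+N$ with both endpoints in $A$. The map $w\mapsto\eta(w)$ is at most $(N+1)$-to-one up to position ambiguity, which costs only a subexponential factor. For $(x,t)$ with $x\in[\eta(w)]$ and $t\in[0,\tau(x_1))$, choose $n$ so that $S_{|\eta|}\tau\approx T$ up to $O(1)$; more precisely, pick $n=n(T)\approx T/\int\tau\,\d\mu$. Using Lemma \ref{mixingcond}(a),(c), append symbols of $A$ so that $\Phi_T(x,t)$ lands in $\widetilde K_m$, i.e.\ the point lies in $R_T(m)$. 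This costs a factor $C(N')$ in measure, and the contribution of the boundary pieces to the integrals is $O(1)$ because they use finitely many symbols. The time average of $g$ is then $\int g\,\d\nu\pm O(\epsilon)+O(1/T)>\gamma$ for $\epsilon$ small. The occupation of $\widetilde K_M$ is at most $\nu(\widetilde K_M)+O(\epsilon)<1/Q$. Hence the whole set $[\eta(w)]\times[0,\tau_m)$, intersected with the appended cylinders, lies in $Y^g_T(\gamma;Q,M)$.

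\textbf{Step 3: measure estimate.} Lemma \ref{weak_gibbs} for $\mu_{\Delta_f}$, with $P(\Delta_f)=0$ (this follows from $P^\Phi(f)=0$) and fixed endpoint symbols in the finite set $A$, gives $\mu_{\Delta_f}([\eta(w)])\ge c\,e^{S_n\Delta_f(x)}\ge c\,e^{n\int\Delta_f\,\d\mu-\epsilon n}$. Summing over $G_n$, with the cylinders disjoint, yields
\[\nu_f(Y^g_T(\gamma;Q,M))\ge c'\,e^{n(h_\mu(\sigma)+\int\Delta_f\,\d\mu-2\epsilon)}.\]
Dividing by $T\approx n\int\tau\,\d\mu$ and applying Abramov's formula gives $\liminf\ge h_\nu(\Phi)+\int f\,\d\nu-O(\epsilon)$. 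Letting $\epsilon\to0$ finishes the proof.

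\textbf{Main obstacle.} The delicate step is the time bookkeeping in Step 2. $T$ is a continuous parameter, whereas word lengths are discrete and $\tau$ varies. The orbit must exit exactly in $\widetilde K_m$ at time $T$, for all $t$ in a set of positive fibre length, while the padding remains uniformly bounded. I would handle this by padding with a word from $A$ of bounded but variable length, chosen so that the fibre over $x$ at time $T$ lies in $\widetilde K_m$. Since $\tau$ is bounded on $K_m$ by $\tau_m$, a padding of at most $\lceil\tau_m/\tau_0\rceil+N$ symbols from $A$ suffices.
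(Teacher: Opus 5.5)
There is a genuine gap in Step 2, exactly at the time bookkeeping you flag as the main obstacle. All words in $G_n$ have the same length $n=n(T)$. The only control you have on their return times is $|S_n\tau-n\int\tau\,\d\mu|<\epsilon n$, so as $w$ ranges over $G_n$ the values $S_n\tau$ spread over a window of width of order $\epsilon n\asymp\epsilon T$. No single $n$ gives $S_{|\eta|}\tau=T+O(1)$ for all of them, and your bound of $\lceil\tau_m/\tau_0\rceil+N$ padding symbols is false. Two things go wrong:
\begin{enumerate}
\item For words with $S_n\tau$ exceeding $T$ by more than $O(1)$, time $T$ falls inside the block $w$ itself. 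Its symbols are only known to lie in $\{1,\dots,\ell\}$, so $\Phi_T(x,t)\in\widetilde K_m$, and hence membership in $R_T(m)$, is not guaranteed once $\ell>m$. That is the relevant case, since $m$ is fixed by Lemma \ref{mixingcond} while $\ell$ depends on $\mu$.
\item For words with $S_n\tau\le T-c\,\epsilon T$ you need on the order of $\epsilon T/\tau_0$ padding symbols, not a bounded number, and you have not accounted for their cost.
\end{enumerate}

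The paper avoids this by making the length word-dependent. On an Egorov set $B$ with $\mu(B)>\frac12$ it truncates each $\omega$ at $m_T(\omega)=\max\{k:\sup_{[\omega_1,\dots,\omega_k]}S_k\tau\le T-2\delta_1\}$. Every selected cylinder then has $S\tau\in[T-O(1),T-2\delta_1]$, so the padding in Claim \ref{claim:1} really is bounded. The cylinders are counted not through a fixed-length $G_n$ but through $\frac12\le\sum_{C\in\mathcal B_T}\mu(C)$, the Shannon--McMillan--Breiman upper bound on $\mu(C)$, and $|C|\ge (T-\delta_1')/(\int\tau\,\d\mu+\epsilon)$.

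Alternatively, you can keep one length if you take $n\approx T/(\int\tau\,\d\mu+\epsilon)$, so that $S_n\tau\le T-O(1)$ on every good word. You then pad with a word-dependent number $O(\epsilon T)$ of symbols from $A$, which is admissible by iterating Lemma \ref{mixingcond}(a). By the weak Gibbs property with endpoints in $A$ and boundedness of $\Delta_f$ on $\bigcup_{a\in A}[a]$, this padding costs only a factor $e^{-O(\epsilon T)}$. It moves the $g$-average and the $\widetilde K_M$-occupation by $O(\epsilon)$, which the strict inequalities $\gamma<\int g\,\d\nu$ and $\nu(\widetilde K_M)<1/Q$ absorb for small $\epsilon$. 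All these losses vanish as $\epsilon\to0$. With either repair, the rest of your outline goes through.
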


\begin{proof}
Since $Y^g_T(\gamma,Q,M)\subset Y^g_T(\gamma)$, it is enough to prove the bound $$\liminf_{T\to\infty} \frac{1}{T}\log\nu_{f}(Y^g_{T}(\gamma;Q,M))\geq h_{\nu}(\Phi)+\int f\d\nu,$$
for every $\nu$ satisfying the assumptions in the statement.

We can now build the cylinder sets that will give the lower bound. By the Birkhoff Ergodic Theorem and the Shannon-Macmillan Brieman Theorem we have that for $\mu$-almost all $\omega\in\Sigma$,
\begin{enumerate}
\item
$\lim_{n\to\infty}n^{-1}S_n\Delta_{f}(\omega)=\int\Delta_{f}\, \d\mu.$
\item 
$\lim_{n\to\infty}n^{-1}S_n\Delta_{g}(\omega)=\int\Delta_{g}\, \d\mu.$
\item 
$\lim_{n\to\infty}n^{-1}S_n\tau(\omega)=\int\tau\, \d\mu.$
\item
$\lim_{n\to\infty}n^{-1}\log(\mu([\omega_1,\ldots,\omega_n]))=h_{\mu}(\sigma).$
\item
$\lim_{n\to\infty}\frac{\sum_{i=0}^{n-1}(\tau\chi_{K_M})(\sigma^{i}(\omega))}{\sum_{i=0}^{n-1}\tau(\sigma^{i}(\omega))}<\frac{1}{Q}.$

\end{enumerate}
This means that, by Egorov's Theorem, we can find a Borel set $B\subset\{1,\ldots,\ell\}^{\N}$ where the convergence in all the above statements is uniform and $\mu(B)>\frac{1}{2}$.  Let $\min\{\int g\,\d\nu-\gamma,\frac{1}{Q}-\int\chi_{\widetilde{K}_M}\,\d\nu\}>2\epsilon>0$. We can find $N\in\N$ such that for any $n\geq N$ and $\omega\in B$:
\begin{enumerate}
\item[(a')] $n^{-1}S_n\Delta_{f}(\omega)\in \left(\int\Delta_{f} \, \d\mu-\epsilon,\int\Delta_{f} \, \d\mu+\epsilon\right).$
\item[(b')] $n^{-1}S_n\tau(\omega)\in \left(\int\tau\, d\mu-\epsilon,\int\tau\, \d\mu+\epsilon\right).$
\item[(c')] $\frac{S_n\Delta_{g}(\omega)}{S_n\tau(\omega)}>\gamma+2\epsilon.$
\item[(d')] $\mu([\omega_1,\ldots,\omega_n])\in (e^{-n(h_{\mu}(\sigma)+\epsilon)},e^{-n(h_{\mu}(\sigma)-\epsilon)}).$
\item[(e')] $\frac{\sum_{i=0}^{n-1}(\tau\chi_{K_M})(\sigma^{i}(\omega))}{\sum_{i=0}^{n-1}\tau(\sigma^{i}(\omega))}<\frac{1}{Q}-2\epsilon.$
\end{enumerate}

For each $\omega\in B$ we set
$$m_T(\omega):=\max\left\{m:\sup_{\eta\in [\omega_1,\ldots,\omega_m]}S_m\tau(\eta)\leq T-2\delta_1 \right\}.$$ Fix $T$ sufficiently large such that $m_T(\omega)\ge N$ for every $\omega\in B$ (note that $\tau$ is bounded on $B$). Observe that $T-2\delta_1\le S_{m_T(w)+1}\tau(\omega)$. By part  (b) above $S_{m_T(w)+1}\tau(\omega)\le(\int \tau \,\d\mu+\epsilon)(m_T(\omega)+1)$, and therefore $$m_T(\omega)\ge \frac{T-\delta_1'}{\int \tau \,\d\mu+\epsilon},$$ where $\delta_1'=2\delta_1-(\int \tau \,\d\mu+\epsilon).$ For each $\omega\in B$, consider the cylinder $$C(\omega,T)=[\omega_1,\ldots,\omega_{m_T(\omega)}].$$ 
Define $\mathcal B_T=\{C(\omega,T): \omega\in B\}$. 
Since $\mu(B)>1/2$ we have that
\begin{eqnarray*}
\frac{1}{2}&\leq& \sum_{C\in \mathcal B_T} \mu(C)\\
&\leq& \sum_{C\in \mathcal B_T}e^{-|C|(h_{\mu}(\sigma)-\epsilon)}\\
&\leq&  \sum_{C\in \mathcal B_T}e^{-\frac{T-\delta_1'}{\int \tau \, \d \mu+\epsilon}(h_{\mu}(\sigma)-\epsilon)}\\
&=&\#\mathcal B_T e^{-\frac{T-\delta_1'}{\int \tau \, \d \mu+\epsilon}(h_{\mu}(\sigma)-\epsilon)}.
\end{eqnarray*}
Thus
\begin{align}\label{bT}
    \#\mathcal B_T\geq \kappa e^{\frac{T}{\int \tau \, \d\mu+\epsilon}(h_{\mu}(\sigma)-\epsilon)}.
\end{align}
where $\kappa=\frac{1}{2}e^{\frac{-\delta_1'}{\int \tau \, \d\mu+\epsilon}(h_{\mu}(\sigma)-\epsilon)}.$

In order to prove the proposition we have the following claim:

\begin{claim}\label{claim:1}
There exist $N_1,N_2\in\N$ and $E>0$ such that for any $T$ sufficiently large and for each $C=[\omega_1,\ldots,\omega_n]\in\mathcal{B}_T$  there exists a cylinder set $\eta(C):=[\eta_1,\ldots,\eta_{n+N_1+N_2}]$ such that $(\eta_{N_1+1},\ldots,\eta_{N_1+n})=(\omega_1,\ldots,\omega_n)$ and 
\begin{enumerate}
\item
$([\eta_1,\ldots,\eta_{n+N_1+N_2}]\times\R)\cap Y_0\subset R_T(m)$
\item 
For any $(\eta,t)\in ([\eta_1,\ldots,\eta_{n+N_1+N_2}]\times\R)\cap Y_0$ we have
$$\int_0^T g(\Phi^s(\eta,t))\,\d s\geq  \gamma T$$
and
$$\int_0^T  \chi_{\widetilde{K}_M}(\Phi^s(\eta,t))\,\d s<\frac{1}{Q}$$
\item 
$\nu_f(([\eta_1,\ldots,\eta_{n+N_1+N_2}]\times\R)\cap Y_0)\geq E\nu_f(([\omega_1,\ldots,\omega_n]\times\R)\cap Y_0
).$
\end{enumerate}
\end{claim}

\begin{proof}
 
 We fix  $\delta_2=\max\{\sum_{n=1}^{\infty}\text{var}_n(\tau),\sum_{n=1}^{\infty}\text{var}_n(\Delta_ f),\sum_{n=1}^{\infty}\text{var}_n(\Delta_ g)\}$.  We now fix $T>0$.  We now take $[\omega_1,\ldots,\omega_n]\in\mathcal{B}_T$. We have that for any $\eta\in [\omega_1,\ldots,\omega_n]$ that
 $$T-2\delta_1\geq S_n\tau(\eta)\geq T-2\delta_1-\delta_2-C'$$
Since $\omega_1\leq l$, $\omega_n\leq l$ and the shift is topologically mixing we can find $N_1\in\N$ and an admissible cylinder
$[i_1,\ldots,i_{N_1},\omega_1,\ldots,\omega_n,i_{N_1+1},\ldots, i_{2N_1}]$ such that $i_1,i_{2N_1}\in A$ and for all $\eta\in [i_1,\ldots,i_{N_1},\omega_1,\ldots,\omega_n,i_{N_1+1},\ldots, i_{2N_1}]$ we have that 
 $$ T-2\delta_1-\delta_2-C'\leq S_{n+2N_1}\tau(\eta)< T.$$
Now using the properties of $A$ there exists $N_2'$ such that we can find $(j_1,\ldots,j_{N_2'})\in A^{N_2'}$ such that  
$ [i_{1},\ldots,i_{N_1},\omega_1,\ldots,\omega_n,i_{N_1+1},\ldots, i_{2N_1},j_1,\ldots,j_{N_2'}]$ is an admissible cylinder and for all $\eta \in [i_{1},\ldots,i_{N_1},\omega_1,\ldots,\omega_n,i_{N_1+1},\ldots, i_{2N_1},j_1,\ldots,j_{N_2'}]$ we have that $S_{n+2N_1+N_2'}\tau(\eta)\geq T+\max_{\underline{i}:i_1\in A}\{\tau(\underline{i})\}$. 
Set 
$[\eta_1,\ldots,\eta_{n+N_1+N_2}]=[i_{1},\ldots,i_{N_1},\omega_1,\ldots,\omega_n,i_{N_1+1},\ldots, i_{2N_1},j_1,\ldots,j_{N_2'}]$, where $N_2=N_1+N_2'$.
Thus
$$([\eta_1,\ldots,\eta_{n+N_1+N_2}]\times\R)\cap Y_0\subset R_T(m).$$
This completes the proof of the first part of the Lemma.

For the second part we use that for $\eta\in [\omega_1,\ldots,\omega_n]$ we have
$$\frac{S_n\Delta_g(\eta)}{S_n\tau(\eta)}>\gamma+\epsilon, \quad \text{ and } \quad \frac{\sum_{i=0}^{n-1}(\tau\chi_{K_M})(\sigma^{i}(\omega))}{\sum_{i=0}^{n-1}\tau(\sigma^{i}(\omega))}<\frac{1}{Q}-\epsilon$$
(using our assumption on $\mathcal{B}_T$ and bounded distortion). It then follows that for any $(\eta,t)\in ([\eta_1,\ldots,\eta_{n+N_1+N_2}]\times\R)\cap Y_0$ we have that
$$\int_0^T g(\Phi^s(\eta,t))\,\d s\geq (T-2\delta_1-\delta_2-C')(\gamma+\epsilon)-\mathit{O}(1)$$
and
$$\int_0^T \chi_{\widetilde{K}_M}(\Phi^s(\eta,t))\,\d s\leq  (T-2\delta_1-\delta_2-C')\left(\frac{1}{Q}-\epsilon\right)+(2\delta_1+\delta_2+C'),$$
where we have used that $g$ is bounded and that $N_1,N_2$ are independent of $T$.

The independence of $N_1,N_2$ on $T$ and the fact that $\nu_f$ projects to a weak Gibbs measure on $\Sigma$ completes the proof of part (c). \end{proof}

Assume $T$ is sufficiently large. It is clear from the construction that the map $\eta:\B_T\to \eta(\B_T)$ is injective, thus using Claim \ref{claim:1} we obtain that

\begin{eqnarray*}
\nu_{f}(\{(\omega,t)\in R_{T}(m;Q,M):\frac{1}{T}\int_{0}^T g(\Phi_s((x,t))) \,\d s>\gamma\}) 
&\geq& \sum_{C\in \B_T}\nu_{f}((\eta(C)\times\R)\cap Y_0)\\
&\gtrsim & \sum_{C\in \B_T}\nu_{f}((C\times\R)\cap Y_0)\\
&\gtrsim &  \frac{1}{\int \tau \,\d\mu_f} \sum_{C\in \B_T}e^{S_{|C|}\Delta_f(\chi_C)}\\
&\gtrsim &  e^{\frac{T(h_{\mu}(\sigma)+\int \Delta_f\, \d\mu-2\epsilon)}{\int\tau\, \d\mu+\epsilon}}  
\end{eqnarray*}
where $\chi_C$ is a point in $C$ and in the last inequality we have used bound (\ref{bT}) and (a'). Thus putting this together we get that
$$\liminf_{T\to\infty}\frac{1}{T}\log\nu_{f}(\{(\omega,t)\in R_{T}(m,Q,M):\frac{1}{T}\int_{0}^T g(\Phi_s((x,t)))\, \d s>\gamma\})\geq\frac{h_{\mu}(\sigma)+\int \Delta_f \, \d\mu-2\epsilon}{\int\tau\, \d\mu+\epsilon}$$
and letting $\epsilon\to 0$ gives
\begin{eqnarray*}
\liminf_{T\to\infty}\frac{1}{T}\log\nu_{f}(\{(\omega,t)\in R_{T}(m,Q,M):\frac{1}{T}\int_{0}^Tg(\Phi_s((x,t)))\, \d s>\gamma\})&\geq& \frac{h_{\mu}(\sigma)}{\int\tau\, \d\mu}+\frac{\int\Delta_{f}\, \d\mu}{\int\tau\, \d\mu}\\
&=& h_{\nu}(\Phi)+\int\phi\,\d\nu.
\end{eqnarray*}
This completes the proof of Proposition \ref{keyprop}.
\end{proof}

\begin{proof}[Proof of the lower bounds for Theorem \ref{thm:ldp}]
We start with part (a).
 By Theorem \ref{RATEVARIATION} there exists an ergodic $\mu$ with $\supp(\mu)\subseteq \{1,\ldots,\ell\}^{\N}$ for some $\ell\in\N$ and $\nu=\mu\times \text{Leb}|_{Y}/(\mu\times \text{Leb})(Y)$ where $\int g \, \d\nu>\gamma$, $\nu\big(([1]\times\R)\cap Y_0\big)<1$ and
$$h_{\nu}(\Phi)+\int f \, \d\nu>I^{f,g}(\gamma)-\epsilon.$$
Thus applying Proposition \ref{keyprop} (with $Q=M=1$) we get that
$$\liminf_{T\to\infty}\frac{1}{T}\log\nu_{f}( Y_{T}(\gamma))\geq \liminf_{T\to\infty} \frac{1}{T}\log\nu_{f}( Y_{T}(\gamma,1,1))\geq h_{\nu}(\Phi)+\int f \, \d\nu\geq I^{f,g}(\gamma)-\epsilon.$$
Since the choice of $\epsilon$ was arbitrary, we obtain that
$$\liminf_{T\to\infty}\frac{1}{T}\log\nu_{f}( Y_{T}(\gamma))\geq I(\gamma).$$

For part $(b)$ we fix $\gamma<\beta_{\infty}(g)$, $\epsilon>0$ and $q,M\in\N$. By Theorem \ref{RATEVARIATION} there exists an ergodic $\mu$ with $\supp(\mu)\subseteq \{1,\ldots,l\}^{\N}$ for some $l\in\N$ and $\nu=\mu\times \text{Leb}|_{Y}/(\mu\times \text{Leb})(Y)$ where $\int g \,\d\nu>\gamma$, $\nu((K_M\times\R)\cap Y_0)<\frac{1}{Q}$ and  
$$h_{\nu}(\Phi)+\int f \, \d\nu>I^{f,g}_{\infty}(\gamma)-\epsilon.$$
 Thus applying Proposition \ref{keyprop} we get that
 $$\liminf_{T\to\infty} \frac{1}{T}\log\nu_{f}( Y_{T}(\gamma,Q,M))\geq I^{f,g}_{\infty}(\gamma)-\epsilon.$$
 Thus,
$$\liminf_{T\to\infty} \frac{1}{T}\log\nu_{f}( Y_{T}(\gamma,Q,M))\geq I^{f,g}_{\infty}(\gamma),$$
concluding the proof of the lower bound. \end{proof}

To conclude the proof we need to consider the case where $f$ is SPR. For part (a)  if $s_\infty(f)<1$ and $\gamma\in (\int g \,\d\nu_f,\beta(g))$ then $I^{f,g}(\gamma)<0$ follows from Corollary \ref{awayfromzero}(a). For part (b) if $\gamma<\beta_\infty(g)$ then $I^{f,g}_\infty(\gamma)<0$ is a consequence of Corollary \ref{awayfromzero}(b). Thus the proof of Theorem \ref{thm:ldp} is complete.

\subsection{Further consequences}

We start by proving Corollary \ref{cor:version2} from the introduction.

\begin{proof}
We prove the  two claims separately. 
\begin{enumerate}
\item 
This follows from applying Theorem \ref{thm:ldp}(a) both to $g$ with $\gamma=\int g\,\d\nu_{f}+\epsilon$ and $-g$ with $\gamma=-\int g\,\d\nu_f+\epsilon$.
\item 
This is immediate from taking $\gamma$ sufficiently small in  Theorem \ref{thm:ldp}(b) and using the fact that 
since $f$ is SPR we have $I_{\infty}^{f,g}(\gamma)\leq P_{\infty}^{\Phi}(g)<0$.

\end{enumerate}
\end{proof}

We also get some consequences in the discrete time case, where we set
$K_m=\bigcup_{i=1}^{m}[i].$

\begin{corollary}\label{discrete}
Let $(\Sigma,\sigma)$ be topologically mixing, $\phi:\Sigma\to\R$ be a function of summable variations with zero pressure and equilibrium state $\mu_{\phi}$. Moreover, assume that $\sup_{x\in[b]}|\phi(x)|< \infty$, for all $b\in\N$.
Let $\psi:\Sigma\to\R$ be a bounded function of summable variations. We have for all $m\in\N$
\begin{enumerate}
\item\label{corpart1}
For $\gamma<\beta(\psi)$ we have that
$$\lim_{n\to\infty}n^{-1}\log\mu_{\phi}\{x\in K_m\cap\sigma^{-n}(K_m):S_n\psi(x)> n\gamma\}=I^{\phi, \psi}(\gamma).$$
\item 
Moreover, if $\phi$ is SPR and $\gamma\in (\int\psi\,\d\mu_{\phi},\beta(\psi))$ then $I^{\phi, \psi}(\gamma)<0$.
\end{enumerate}
\end{corollary}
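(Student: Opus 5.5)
The plan is to deduce Corollary \ref{discrete} from Theorem \ref{thm:ldp}(a) by realising the discrete system as a suspension flow with constant roof $\tau\equiv 1$. Then $Y_0=\Sigma\times[0,1)$, $\Phi_n(x,0)=(\sigma^n x,0)$, and we set $f(x,t)=\phi(x)\psi_0'(t)$, $g(x,t)=\psi(x)\psi_0'(t)$ with $\psi_0$ as in Remark \ref{rem:ex}, so that $\Delta_f=\phi$ and $\Delta_g=\psi$. The hypotheses then check out directly. $\tau\equiv1$ trivially satisfies the roof assumptions. $\Delta_f=\phi$ has summable variations and is bounded on each $[b]$. $g$ is bounded because $\psi$ is. Abramov's formula gives $h_\nu(\Phi)=h_\mu(\sigma)$ and $\int f\,\d\nu=\int\phi\,\d\mu$, so $P^\Phi(f+q(g-\gamma))=P(\phi+q(\psi-\gamma))$. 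Consequently $P^\Phi(f)=0$, $\nu_f=\mu_\phi\times\mathrm{Leb}|_{Y_0}$ is the equilibrium state, $I^{f,g}=I^{\phi,\psi}$, and $\beta(g)=\beta(\psi)$. The same identity holds for the pressure at infinity, via Lemma \ref{lema_var} or Theorem \ref{prin_var_flow_infty}, so SPR for $\phi$ is SPR for $f$.

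Next I relate the sets. For $T=n$ an integer and $(x,t)\in\widetilde K_m$, the condition $\Phi_n(x,t)\in\widetilde K_m$ becomes $\sigma^n x\in K_m$. The time integral $\int_0^n g(\Phi_s(x,t))\,\d s$ differs from $S_n\psi(x)$ by at most $2\|g\|_0$. This gives sandwich inclusions
\[
\{x\in K_m\cap\sigma^{-n}K_m: S_n\psi(x)>n\gamma'\}\times[0,1)\subset Y^g_n(\gamma)\subset \{x\in K_m\cap\sigma^{-n}K_m:S_n\psi(x)>n\gamma''\}\times[0,1),
\]
with $\gamma''<\gamma<\gamma'$ and $|\gamma'-\gamma''|=O(1/n)$. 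Concretely, I would apply these with slightly perturbed levels $\gamma\pm\epsilon$. Since $\nu_f$ of a product set equals $\mu_\phi$ of the base, the discrete limits are bracketed by $I^{f,g}(\gamma\pm\epsilon)$. Continuity of $I^{f,g}$ on $(\alpha(\psi),\beta(\psi))$ (Lemma \ref{continuity}) then closes the gap. The case $\gamma\le\alpha(\psi)$ is degenerate: there the condition is essentially vacuous, the rate equals $P(\phi)=0$, and $I=0$, which is handled via the $q=0$ choice and the trivial lower bound using $\mu_\phi(K_m\cap\sigma^{-n}K_m)\to\mu_\phi(K_m)^2$ by mixing.

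The main obstacle is that Theorem \ref{thm:ldp} only asserts the limit for \emph{some} $m$, whereas the corollary claims it for all $m$. The upper bound holds for every $m$, as already noted after the theorem. For the lower bound, I would revisit Lemma \ref{mixingcond} and Proposition \ref{keyprop}. With $\tau\equiv1$ there is no non-mixing issue for the flow, since periods are integers and the base is mixing. So I would replace the set $A$ by $\{1,\dots,m\}$ for arbitrary $m$ and use topological mixing of $\sigma$ to pad the words $C\in\mathcal B_T$ with connecting words of fixed length $N$ that start and end in $K_m$, adjusting total length to exactly $n$. The weak Gibbs property (Lemma \ref{weak_gibbs}) gives the comparability constant $E$ of Claim \ref{claim:1}. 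The remainder of the proof of Proposition \ref{keyprop} then goes through verbatim, combined with Theorem \ref{RATEVARIATION}(a).

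Finally, part (b) follows from Corollary \ref{awayfromzero}(a). The hypothesis $s_\infty(f)<1$ is needed there only for Theorem \ref{thm:spr_deri}, that is, for differentiability of $q\mapsto P(\phi+q\psi)$ at $0$ with derivative $\int\psi\,\d\mu_\phi$. For the constant roof and bounded $\psi$ this follows from Sarig's SPR theory (the remark citing \cite{rs}); alternatively, I would note that boundedness of $\psi$ makes $P(\phi+q\psi)$ finite near $0$, so the compactness and upper semicontinuity argument of Theorem \ref{thm:spr_deri} applies without the $s_\infty$ condition.
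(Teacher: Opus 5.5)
Your proposal is correct, and its skeleton matches the paper's. You realise the shift as the suspension with $\tau\equiv1$ and apply Theorem \ref{thm:ldp}(a). For (b) you use the minimiser argument: $q_0>0$ because the right derivative at $q=0$ is $\int\psi\,\d\mu_\phi-\gamma<0$, with the needed differentiability taken from \cite[Theorem 3.2]{rs}.

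The genuine difference is how you obtain the statement for every $m$. You reopen Proposition \ref{keyprop} and Claim \ref{claim:1}, replace $A$ by $\{1,\dots,m\}$, and pad with fixed-length connecting words. This works because integer return times remove the flow's non-mixing issue.

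The paper instead keeps Theorem \ref{thm:ldp} as a black box. For the particular $m$ the theorem provides, it attaches connecting words of length $N$ from $1$ to each $i\le m$ and back. Bounded distortion and the weak Gibbs property then give
\[
\mu_\phi\{x\in K_1\cap\sigma^{-(n+2N)}K_1:S_{n+2N}\psi(x)>n\gamma-2N\|\psi\|_\infty\}\ge C\,\mu_\phi\{x\in K_m\cap\sigma^{-n}K_m:S_n\psi(x)>n\gamma\}.
\]
So the lower bound holds for $K_1$, hence for every $K_{m'}\supseteq K_1$; the upper bound already holds for all $m$.

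The paper's route is shorter and avoids re-verifying the construction. Yours is self-contained and makes the role of $\tau\equiv1$ explicit. Two of your details are ones the paper passes over. Using Remark \ref{rem:ex} makes $f,g$ genuinely continuous on $Y$; the paper's choice $f(\omega,t)=\phi(\omega)$ is not continuous across $(x,1)\sim(\sigma x,0)$. Your sandwich-plus-continuity step handles the $O(\|\psi\|_\infty)$ discrepancy between $\int_0^n g(\Phi_s(x,t))\,\d s$ and $S_n\psi(x)$, which the paper does not address.

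One caveat in (b): drop the fallback claim that boundedness of $\psi$ lets the argument of Theorem \ref{thm:spr_deri} run without $s_\infty(f)<1$. That hypothesis is what supplies $\liminf\int f\,\d\nu_n>-\infty$ along maximising sequences, which is required to invoke Theorems \ref{thm:comp_sus} and \ref{thm:usc_sus}. Finiteness of $P(\phi+q\psi)$ near $0$ is automatic for bounded $\psi$ and does not give this. Rely on \cite{rs} for the derivative, as the paper does.
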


\begin{proof}
We prove the two claims.
\begin{enumerate}
\item 
We set $\tau:\Sigma\to [0,\infty)$ to be constantly equal to $1$, $f:Y\to\R$ to satisfy $f((\omega,t))=\phi(\omega)$ for all $(\omega,t)\in Y$ and $g((\omega,t))=\psi(\omega)$ for all $(\omega,t)\in Y$. It now follows from Theorem \ref{thm:ldp} that there exists $m\in\N$ such that For $\gamma<\beta(\psi)$ we have that
$$\lim_{n\to\infty}n^{-1}\log\mu_{\phi}\{x\in K_m\cap\sigma^{-n}(K_m):S_n\psi(x)> n\gamma\}=I^{\phi, \psi}(\gamma).$$
To complete the proof we need to show the result for $K_1$. To do this note that there exists $N\in\N$ such that for all $1\leq i\leq m$  there exists admissible words $(\omega_1,\ldots,\omega_N)$, $(\tau_1,\ldots,\tau_N)$ such that $\omega_1=1$, $\tau_N=1$, $\omega_N=i$ and $\tau_1=i$. Thus
$$\mu_{\phi}\{x\in K_1\cap\sigma^{-(n+2N)}(K_1):S_{n+2N}\psi(x)> n\gamma-2N\norm
{\psi}_{\infty}\}\geq C\mu_{\phi}\{x\in K_m\cap\sigma^{-n}(K_m):S_n\psi(x)> n\gamma\}$$

\item By the proof of Theorem \ref{RATEVARIATION}, there exists $q_0 \geq 0$ such that $P(\phi+q_0(\psi-\gamma))=I^{\phi,\psi}(\gamma)$. Since $\gamma\in \left(\int \psi \, \d\mu_{\phi},\beta(\psi)\right)$ we have that the right derivative of $P(\phi+q(\psi-\gamma))$ at $q=0$ is negative (see \cite[Theorem 3.2]{rs}) and since the asymptotic slope is positive we have $q_0>0$.
Since  $P(\phi)=0$ we obtain  $I^{\phi,\psi}(\gamma)=P(\phi+q_0(\psi-\gamma))<0$. 
\end{enumerate}    
\end{proof}

\begin{corollary}
Let $(\Sigma,\sigma)$ be topologically mixing and satisfy the BIP property, $\phi:\Sigma\to\R$ be a function of summable variations with zero pressure and equilibrium state $\mu_{\phi}$. Moreover, assume that $\sup_{x\in[b]}|\phi(x)|< \infty$ for all $b\in\N$.
Let $\psi:\Sigma\to\R$ be a bounded function of summable variations. We have for all $m\in\N$ and $\gamma<\beta(\psi)$  that 
$$\lim_{n\to\infty}n^{-1}\log\mu_{\phi}\{x\in \Sigma:S_n\psi(x)> n\gamma\}=I^{\phi, \psi}(\gamma).$$
\end{corollary}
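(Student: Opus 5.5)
The idea is to deduce this from Corollary~\ref{discrete}(\ref{corpart1}), using the BIP property to remove the restriction to $K_m\cap\sigma^{-n}(K_m)$. The lower bound is immediate: for any fixed $m$,
\[
\mu_\phi\{x\in\Sigma:S_n\psi(x)>n\gamma\}\ge \mu_\phi\{x\in K_m\cap\sigma^{-n}(K_m):S_n\psi(x)>n\gamma\},
\]
so Corollary~\ref{discrete} gives $\liminf_n n^{-1}\log\mu_\phi\{S_n\psi>n\gamma\}\ge I^{\phi,\psi}(\gamma)$. All the work is therefore in the upper bound.

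For the upper bound I would not go through the $K_m$ reduction at all. Instead I would rerun the covering argument of Section~\ref{sec:upperbound}, with the weak Gibbs constant replaced by the uniform Gibbs constant available under BIP (Sarig's theorem quoted after Lemma~\ref{weak_gibbs}). Fix $q\ge0$ and cover the set $\{S_n\psi>n\gamma\}$ by the cylinders $C=[x_1,\dots,x_n]$ that meet it. The Gibbs property with a single constant $C_0$ gives $\mu_\phi(C)\le C_0e^{S_n\phi(x)}$ for every $x\in C$, since $P(\phi)=0$. Because $\psi$ has summable variations, $S_n\psi$ varies on $C$ by at most $V=\sum_k V_k(\psi)$, so for a point $x\in C$ with $S_n\psi(x)>n\gamma$ we get
\[
\mu_\phi(C)\le C_0e^{qV}e^{S_n(\phi+q(\psi-\gamma))(x)}.
\]
Summing over all cylinders gives
\[
\mu_\phi\{S_n\psi>n\gamma\}\le C_0e^{qV}\sum_{|C|=n}\sup_{x\in C}e^{S_n(\phi+q(\psi-\gamma))(x)}.
\]

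The main obstacle is to show that this sum over all $n$-cylinders grows at exponential rate at most $P(\phi+q(\psi-\gamma))$. For this I would again use BIP. For each cylinder $[x_1,\dots,x_n]$ there are $b,b'$ in the finite set $B$ with $b'x_1$ and $x_nb$ admissible, and by finite primitivity a connecting word of bounded length $L$ from $b$ back to $b'$. This produces a periodic point of period $n+L'$, with $L'$ bounded, whose Birkhoff sum of $\phi+q(\psi-\gamma)$ differs from the one on the cylinder by a bounded amount. The bound holds because $\phi$ and $\psi$ are bounded on the finitely many cylinders $[j]$ with $j$ in the connecting alphabet, and by bounded distortion. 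Since each periodic orbit is hit at most $L'$ times, the cylinder sum is at most a constant times $\sum_{k\le L'}\sum_{\sigma^{n+k}z=z}e^{S_{n+k}(\cdot)(z)}$. Under BIP the pressure can be computed as a limit over all periodic points, so the exponential rate is at most $P(\phi+q(\psi-\gamma))$.

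If this pressure is infinite for some $q$, the bound is trivial for that $q$. Taking $\limsup_n \frac1n\log$ and then the infimum over $q\ge0$ gives the upper bound $I^{\phi,\psi}(\gamma)$, which completes the proof.
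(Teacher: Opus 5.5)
Your proposal is correct, but the upper bound follows a different route from the paper's.

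\textbf{The paper's route.} The paper does not redo a covering argument. It reduces the statement to Corollary~\ref{discrete}(a) with $m=1$. Under BIP and mixing there are connecting words of a fixed length $N$ from $1$ to any symbol and from any symbol back to $1$. So every $n$-cylinder $[x_1,\ldots,x_n]$ meeting $\{S_n\psi>n\gamma\}$ extends to a cylinder $[1,\omega,x_1,\ldots,x_n,\omega',1]\subset K_1\cap\sigma^{-(n+2N)}K_1$ with two properties: its $\mu_\phi$-measure is comparable (by the uniform Gibbs constant), and on it $S_{n+2N}\psi>n\gamma-O(1)$. The localized large deviation result then transfers. The resulting $O(1/n)$ shift in the level $\gamma$ is absorbed by continuity of $I^{\phi,\psi}$ (Lemma~\ref{continuity}).

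\textbf{Your route.} You use the localized result only for the lower bound. For the upper bound you run the Chernoff-type covering estimate over all $n$-cylinders. You then use BIP to dominate $\sum_{|C|=n}\sup_C e^{S_n\Psi}$, with $\Psi=\phi+q(\psi-\gamma)$, by periodic-point sums. In effect you show that the Mauldin--Urba\'nski partition function grows at most at the rate of the Gurevich pressure.

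\textbf{Comparison.} Your argument is self-contained: it bypasses Theorem~\ref{thm:ldp} and the suspension machinery, needs no continuity of the rate function, and gives $\limsup_n n^{-1}\log\mu_\phi\{S_n\psi>n\gamma\}\le I^{\phi,\psi}(\gamma)$ for every $\gamma$. The paper's route is shorter, given what precedes it.

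\textbf{Two points of execution.}
\begin{enumerate}
\item The distortion of $S_n\phi$ over an $n$-cylinder whose last symbol is arbitrary involves $V_1(\phi)$. The hypothesis $\sum_{n\ge2}V_n(\phi)<\infty$ does not control this. Avoid it by applying the Gibbs upper bound directly at the periodic point $z_C=\overline{x_1\cdots x_n\, b\, w\, b'}\in C$. For $\psi$ there is no issue, since $V_1(\psi)\le 2\|\psi\|_\infty$.
\item Since $z_C\in C$, the map $C\mapsto z_C$ is injective into periodic points. No counting of orbit multiplicities is needed.
\end{enumerate}

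Finally, both your argument and the paper's rely on a Gibbs constant uniform over all cylinders. In Sarig's theorem this requires $V_1(\phi)<\infty$, a hypothesis left implicit in the statement.
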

\begin{proof}
Since  $(\Sigma,\sigma)$ is topologically mixing and satisfy the big images property this means there exists $N\in N$ such that for all $m\in\N$ and all $n\geq N$ there exist admissible words $\omega_1,\ldots,\omega_l$, $\tau_1,\ldots,\tau_l$ such that $\omega_1=1$, $\tau_1=1$, $\omega_n=m$ and $\tau_n=m$. The result now can be proved using exactly the method as used in part (\ref{corpart1}) of Corollary \ref{discrete}.    \end{proof}

\section{Applications and Examples} \label{sec:ex}

\subsection{ Pressure gap}

In 2001, Kifer, Peres, and Weiss \cite{kpw} established a \emph{dimension gap} for 
Bernoulli measures of the Gauss map. Specifically, they proved the existence of a 
constant  $c \in (0,1)$ such that the Hausdorff dimension of any Bernoulli measure is 
bounded above by $1-c$. Several refinements and alternative proofs of this result have 
since been developed \cite{bj, j, p}. The original proof was derived as a consequence of a large deviation result. This same line of reasoning yields a similar bound for the pressure (see Theorem~\ref{pressuregap}). Furthermore, this approach can be extended to obtain uniform bounds for the free energy of invariant measures for suspension flows that project onto $k$-Markov measures (see Corollary~\ref{coro:gap}). Interestingly, the Hausdorff dimension of an ergodic invariant measure $\mu$ for the Gauss map coincides with the entropy of the associated measure $\nu$ in the suspension flow. In this context, the base dynamics is the Gauss map $G$, and the roof function is given by $\log |G'|$. 

Recall that a  measure $\mu\in \M_\sigma$ is $k$-step Markov if for every $m\in \N$, $b\in \N^m$, $c\in \N$  and $a\in\N^k$ such that $\mu([b,a])>0$, then \begin{align}\label{z} \frac{\mu([b,a, c])}{\mu([ b,a])}=\frac{\mu([a,c])}{\mu([ a])}.\end{align} 

\begin{theorem}\label{pressuregap}
Let $(\Sigma,\sigma)$ be topologically mixing and let $\phi:\Sigma\to\R$ be of summable variations, $\sup_{x\in[b]}|\phi(x)|<\infty$ for every $b\in\N$, and such that $P_{\infty}(\phi)<P(\phi)=0$. Furthermore, assume that $\phi$ admits an equilibrium state $\mu_\phi$. Then, either $\mu_\phi$ is a Markov measure or for any $k\in\N$
$$\sup \left\{h_{\mu}(\sigma)+\int\phi\, \d\mu:\mu\text{ is $k$-step Markov and} \int\phi\, \d\mu>-\infty \right\}<P(\phi)=0.$$
\end{theorem}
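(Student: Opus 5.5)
The plan is to argue by contradiction using compactness rather than a direct large deviation estimate. The large deviation machinery of Corollary~\ref{discrete} serves only as a fallback. Fix $k\in\N$ and suppose the supremum equals $0=P(\phi)$. Then there are $k$-step Markov measures $\mu_n\in\M_\sigma$ with $\int\phi\,\d\mu_n>-\infty$ and
\[
h_{\mu_n}(\sigma)+\int\phi\,\d\mu_n\longrightarrow 0 .
\]
The goal is to show that $\mu_n\to\mu_\phi$ on cylinders, with no loss of mass, and that the $k$-step Markov property passes to this limit. That forces $\mu_\phi$ to be Markov, which is the first alternative of the statement.

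\textbf{Step 1: compactness.} First I will show that $\liminf_n\int\phi\,\d\mu_n>-\infty$. With this bound, the discrete compactness result \cite[Theorem 1.1]{v} (the base-space analogue of Theorem~\ref{thm:comp_sus}) gives a subsequence converging on cylinders to $\lambda\mu$, with $\mu\in\M_\sigma$ and $\lambda\in[0,1]$.

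\textbf{Step 2: no escape of mass, and identification of the limit.} The upper semicontinuity result \cite[Theorem 1.3]{v} (the discrete analogue of Theorem~\ref{thm:usc_sus}) then gives
\[
0=\limsup_n\Bigl(h_{\mu_n}(\sigma)+\int\phi\,\d\mu_n\Bigr)\le \lambda\Bigl(h_\mu(\sigma)+\int\phi\,\d\mu\Bigr)+(1-\lambda)P_\infty(\phi).
\]
Since $P_\infty(\phi)<0$ and $h_\mu(\sigma)+\int\phi\,\d\mu\le 0$, this forces $\lambda=1$ and shows that $\mu$ is an equilibrium state. By uniqueness of equilibrium states for summable variations potentials \cite{bs}, $\mu=\mu_\phi$. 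Because no mass escapes, the convergence is weak$^*$ and in particular holds on every cylinder.

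\textbf{Step 3: the Markov property passes to the limit.} Take $b\in\N^m$, $a\in\N^k$ and $c\in\N$ with $\mu_\phi([b,a])>0$. Then $\mu_n([b,a])>0$ and $\mu_n([a])>0$ for large $n$, so identity \eqref{z} holds for each $\mu_n$. All four cylinder measures converge and the denominators have positive limits, so \eqref{z} holds for $\mu_\phi$. Hence $\mu_\phi$ is $k$-step Markov. So if $\mu_\phi$ is not $k$-step Markov for a given $k$, the supremum for that $k$ must be strictly negative. This gives the dichotomy, reading ``Markov measure'' as ``$k$-step Markov for some $k$''.

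\textbf{Main obstacle: the lower bound on $\int\phi\,\d\mu_n$ in Step 1.} The difficulty is to rule out $\int\phi\,\d\mu_n\to-\infty$ with entropy compensating. If $P(t\phi)<\infty$ for some $t<1$, the argument of Theorem~\ref{thm:spr_deri} works directly: $h_{\mu_n}+t\int\phi\,\d\mu_n\le P(t\phi)$ yields $(1-t)\int\phi\,\d\mu_n\ge h_{\mu_n}+\int\phi\,\d\mu_n-P(t\phi)$, which is bounded below. Without such a hypothesis, I would first try to exploit the Markov structure. A $k$-step Markov measure is determined by its marginal on cylinders of length $k+1$, and its entropy is $-\sum\mu([a,c])\log\bigl(\mu([a,c])/\mu([a])\bigr)$. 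Comparing this with $\int\phi\,\d\mu_n$ via the weak Gibbs bounds of Lemma~\ref{weak_gibbs} should show that large negative $\int\phi$ cannot be offset by entropy without spreading mass to infinity. If that fails, the fallback is the Kifer--Peres--Weiss route. Choose a cylinder observable $\psi$, a function of the first $k+1$ coordinates, whose averages separate $\mu_\phi$ from any $k$-Markov measure near it. Then combine Corollary~\ref{discrete}(b) (so $I^{\phi,\psi}(\gamma)<0$, since $\phi$ is SPR) with the typical-set lower bounds for Markov measures, in the style of Proposition~\ref{keyprop}.
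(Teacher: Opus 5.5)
\textbf{There is a genuine gap in Step 1, and it cannot be closed.} Under the stated hypotheses the bound $\liminf_n\int\phi\,d\mu_n>-\infty$ is false in general, even for sequences of Markov measures. The compactness result you invoke needs this bound (compare the hypothesis of Theorem~\ref{thm:comp_sus}), and Step 2 inherits the problem.

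Here is a counterexample. Take the full shift on $\N$ and $\phi(x)=\log p_{x_1}$ with $\sum_n p_n=1$ and $p_n\asymp 1/(n(\log n)^3)$. Then $P(\phi)=0$ and $P_\infty(\phi)=-\infty$. Also $-\sum_n p_n\log p_n<\infty$, so the Bernoulli measure $\mu_p$ is the equilibrium state and every hypothesis of the theorem holds. Note that $P(t\phi)=\infty$ for all $t<1$.

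For a Bernoulli measure $\mu_r$, which is $k$-step Markov for every $k$, one has $h_{\mu_r}(\sigma)+\int\phi\,d\mu_r=-D(r\|p)$ and $\int\phi\,d\mu_r=-H(r)-D(r\|p)$. Set $r_N=(1-\epsilon_N)p+\epsilon_N u_N$, where $u_N$ is uniform on $\{N+1,\ldots,2N\}$ and $\epsilon_N=(\log\log N)^{-2}$. Convexity of $D$ and concavity of $H$ give
$D(r_N\|p)\le\epsilon_N D(u_N\|p)=O(\epsilon_N\log\log N)\to0$ and $H(r_N)\ge\epsilon_N\log N\to\infty$.

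So the free energies tend to $P(\phi)$ while $\int\phi\,d\mu_{r_N}\to-\infty$. Your plan to extract the bound from the Markov structure is therefore doomed. These $\mu_{r_N}$ do converge to $\mu_p$, so it is the method that breaks, not the conclusion. You also cannot simply drop the integral bound: on the full shift, $\M_{\le1}(\sigma)$ is not compact in the cylinder topology. For example, the periodic measures on $(1,n)^\infty$ would force a cylinder limit with mass $1/2$ on $[1]$ but $0$ on every $[1,j]$. Your argument is valid only under the extra hypothesis $P(t\phi)<\infty$ for some $t<1$, which the theorem does not assume.

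\textbf{What replaces Steps 1 and 2.} The missing ingredient is the differentiability at $q=0$ of $q\mapsto P(\phi+q\psi)$ for bounded cylinder functions $\psi$ and SPR $\phi$. This is the input from \cite[Theorem 3.2]{rs} behind Corollary~\ref{discrete}(b). Combined with Proposition~\ref{der}(a), it says that every sequence whose free energy tends to $P(\phi)$ converges on cylinders to $\mu_\phi$, with no control on $\int\phi$ needed. Your Step 3 then finishes the proof.

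The paper phrases this directly. If $\mu_\phi$ is not Markov, choose $b,a,c$ with $\delta=|\mu_\phi([a])\mu_\phi([b,a,c])-\mu_\phi([b,a])\mu_\phi([a,c])|>0$. Every $k$-step Markov $\mu$ satisfies $|\int\psi_i\,d\mu-\int\psi_i\,d\mu_\phi|\ge\delta/4$ for one of the indicators $\psi_i$ of $[a],[b,a,c],[b,a],[a,c]$. Moreover, $\int\psi\,d\mu\ge\gamma$ implies $h_\mu(\sigma)+\int\phi\,d\mu\le\inf_{q\ge0}P(\phi+q(\psi-\gamma))=I^{\phi,\psi}(\gamma)$. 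By Corollary~\ref{discrete}(b) this is negative for $\psi=\pm\psi_i$ and $\gamma=\int\psi\,d\mu_\phi+\delta/4$. So your fallback is the right route, but it needs only this one-line upper bound, not typical-set lower bounds in the style of Proposition~\ref{keyprop}.
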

\begin{proof}
Suppose that $\mu_{\phi}$ is not a Markov measure. Note that there exist $m\in \N$, $b\in \N^m$, $c\in \N$  and $a\in\N^k$ such that $\mu_\phi([b,a])>0$, and
$$\delta:=|\mu_\phi([a])\mu_\phi([b,a,c])-\mu_\phi([b,a])\mu_\phi([a,c])|>0.$$ On the other hand, if $\mu$ is a $k$-step Markov measure, then $\mu([a])\mu([b,a,c])=\mu([b,a])\mu([a,c]).$ Using exactly the same argument as in \cite[Corollary 2.3]{kpw} define $$\psi_1=\chi_{[a]},\psi_2=\chi_{[b,a,c]}, \psi_3=\chi_{[b,a]},\text{ and }\psi_4=\chi_{[a,c]}$$ Observe that if $\mu$ is a Markov measure, then $\max_{1\le i\le 4}|\int \psi_i\d\mu-\int\psi_i \,\d\mu_\phi|\ge \delta/4$. Thus we have
$$\sup \left\{h_{\mu}(\sigma)+\int\phi\, \d\mu:\mu\text{ is Markov} \right\}\leq\max_{1\leq i\leq 4}
\sup\left\{h_{\mu}(\sigma)+\int\phi\, \d\mu:\left|\int \psi_i \d\mu-\int\psi_i \,\d\mu_\phi\right|\ge \delta/4 \right\}<0,$$
where we have used Corollary \ref{discrete}.
\end{proof}

This has a  corollary in terms of a entropy gap for suspension flows.

\begin{corollary} \label{coro:gap}
Let $(\Sigma,\sigma)$ be a topologically mixing countable Markov shift and $\tau:\Sigma\to\R$ a roof function bounded away from zero, with summable variations and $\sup_{x\in[b]}\tau(x)<\infty$ for every $b\in\N$. Let $(Y,\Phi)$ be the associated suspension flow which we assume has finite entropy. Suppose that $h_{\infty}(\Phi)<h(\Phi)$. Then, the unique measure of maximal entropy for $\Phi$ either projects to a Markov measure on $\Sigma$ or for any $k\in\N$
$$\sup \left\{h_{\nu}(\Phi):\nu=\frac{\mu\times\text{Leb}|_{Y}}{\mu\times\text{Leb}(Y)}\text{ and }\mu\text{ is $k$-Markov } \right\}<h(\Phi),$$
\end{corollary}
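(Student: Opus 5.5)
The plan is to reduce this to Theorem \ref{pressuregap} on the base, using Abramov's formula to translate flow entropy into a base pressure. Set $c:=h(\Phi)$, which is finite by hypothesis, and take $\phi:=-c\tau$ on $\Sigma$. Since $P^\Phi(0)=c=\inf\{t: P(-t\tau)\le 0\}$ and $\tau$ is bounded away from zero, we have $P(-c\tau)=0$: pressure is strictly decreasing in $t$, and finite near $c$ because the flow has finite entropy. The function $\phi$ has summable variations, and $\sup_{x\in[b]}|\phi(x)|<\infty$ because $\tau$ is bounded on each $[b]$.

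The next step is to check that $P_\infty(\phi)<0$. Here I would use the relation $h_\infty(\Phi)=P^\Phi_\infty(0)=\inf\{t:P_\infty(-t\tau)\le 0\}$, quoted in the proof of Theorem \ref{thm:usc_sus} (from \cite[Lemma 8.2.1]{v}). Because $h_\infty(\Phi)<c$, choose $t\in(h_\infty(\Phi),c)$ with $P_\infty(-t\tau)\le 0$. Then
\[
P_\infty(-c\tau)\le P_\infty(-t\tau)-(c-t)\inf\tau<0,
\]
where the inequality follows from the measure-theoretic definition of $P_\infty$: for every $\mu$ we have $\int\tau\,\d\mu\ge\inf\tau$.

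Next I need an equilibrium state for $\phi$. The flow is SPR for the zero potential, and $s_\infty(0)=0<1$, so Theorem \ref{thm:spr_deri} gives a measure of maximal entropy $\nu_0$, unique by \cite{bs}. Its base measure $\mu_0$ satisfies $\int\tau\,\d\mu_0<\infty$ and $h_{\mu_0}(\sigma)-c\int\tau\,\d\mu_0=0$ by Abramov's formula, so $\mu_0$ is an equilibrium state of $\phi$. Theorem \ref{pressuregap} therefore applies: either $\mu_0$ is Markov, which is the first alternative of the corollary, or
\[
\sup\Bigl\{h_\mu(\sigma)-c\int\tau\,\d\mu : \mu \text{ is } k\text{-Markov},\ \int\tau\,\d\mu<\infty\Bigr\}=:-\eta<0.
\]

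In the second case, take any $k$-Markov $\mu$ with $\int\tau\,\d\mu<\infty$, which is needed for $\nu$ to be defined. Then
\[
h_\nu(\Phi)=\frac{h_\mu(\sigma)}{\int\tau\,\d\mu}\le c-\frac{\eta}{\int\tau\,\d\mu}.
\]

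The main obstacle is uniformity, because $\int\tau\,\d\mu$ may be large. To handle it, write $h_\mu(\sigma)\le P(-t\tau)+t\int\tau\,\d\mu$ for a fixed $t\in(h_\infty(\Phi),c)$ with $P(-t\tau)<\infty$, which exists since $P(-s\tau)$ is finite for $s>h_\infty(\Phi)$ in the finite-entropy setting. This gives
\[
\frac{h_\mu(\sigma)}{\int\tau\,\d\mu}\le t+\frac{P(-t\tau)}{\int\tau\,\d\mu}.
\]
Hence if $\int\tau\,\d\mu\ge L$ for $L$ large, then $h_\nu(\Phi)\le (t+c)/2<c$. If instead $\int\tau\,\d\mu<L$, then $h_\nu(\Phi)\le c-\eta/L$. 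Combining the two cases yields a uniform gap below $h(\Phi)=c$, which proves the corollary.
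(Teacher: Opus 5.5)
Your proposal is correct and follows essentially the same route as the paper. You apply Theorem \ref{pressuregap} to $\phi=-h(\Phi)\tau$, get $P_\infty(\phi)<0$ from $h_\infty(\Phi)=\inf\{t:P_\infty(-t\tau)\le 0\}$ and $\inf\tau>0$, and obtain the measure of maximal entropy from the SPR hypothesis. You then get uniformity by splitting according to the size of $\int\tau\,\d\mu$; your bound $h_\mu(\sigma)\le P(-t\tau)+t\int\tau\,\d\mu$ plays the role of the paper's $r_t<\infty$.

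One point needs tightening. Finiteness of $P(-t\tau)$ for $t\in(h_\infty(\Phi),h(\Phi))$ does not come from finite entropy alone; it comes from $t>h_\infty(\Phi)$. A sequence with $h_{\mu_n}(\sigma)-t\int\tau\,\d\mu_n\to\infty$ forces $\int\tau\,\d\mu_n\to\infty$, hence escape to zero with flow entropies above $t$, contradicting Theorem \ref{prin_var_flow_infty}. This is exactly the input the paper uses for $r_t<\infty$. Also, $P(-h(\Phi)\tau)=0$ follows directly once you have the maximal entropy measure, so that step does not need the finiteness claim at all.
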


\begin{proof}
Since $h_{\infty}(\Phi)<h(\Phi)$ the suspension flow is SPR and by \cite[Theorem 8.1]{v} (see also \cite[Lemma 4.12]{irv2} and Theorem \ref{thm:spr_deri}) it has a unique measure of maximal entropy $\nu$ with $\nu_m=(\mu\times\text{Leb}|_{Y})/(\mu\times\text{Leb}(Y))$ for a $\sigma$-invariant probability measure $\mu$. We also have that
$$\inf\{t:P(-t\tau)\leq 0\}=h(\Phi)=h_{\nu_m}(\Phi)=\frac{h_{\mu_m}(\sigma)}{\int \tau \,\d\mu_m}.$$
Thus, $P(-h(\Phi)\tau)=0$ with $\mu_m$ being the equilibrium state for $-h(\Phi)\tau$ (see \cite[Corollary 3.6]{ijt}). 
Furthermore, we have that
$$h_{\infty}(\Phi)=\inf\{t:P_{\infty}(-t\tau)\leq 0\}<h(\Phi).$$
Since $\tau$ is bounded away from $0$ this means that $P_{\infty}(-h(\Phi)\tau))<0$.
Thus by Theorem \ref{pressuregap} it follows that
$$s_k:=\sup \left\{h_{\mu}(\sigma)-h(\Phi)\int\tau\, \d\mu:\mu\text{ is $k$-step Markov and} \int\tau\, \d\mu<\infty \right\}<P(\phi)=0.$$
Now consider
$$h_k=\sup \left\{h_{\nu}(\Phi):\nu=\frac{\mu\times\text{Leb}|_{Y}}{\mu\times\text{Leb}(Y)}\text{ and }\mu\text{ is $k$-Markov } \right\}.$$
Let $h_{\infty}(\Phi)<t<h(\Phi)$ and set 
$$r_t=\sup\left\{\int\tau\, \d\mu:\frac{\mu\times\text{Leb}|_{Y}}{\mu\times\text{Leb}(Y)}\text{ and }\mu\text{ is $k$-Markov  and }\frac{h_{\mu}(\sigma)}{\int\tau\, d\mu}\geq t\right\}.$$
Since $t>h_{\infty}(\Phi)$ we must have $r_t<\infty$; recall that if $\lim_{n\to\infty}\int \tau d\mu_n=\infty$, then $\frac{h_{\mu}(\sigma)}{\int\tau\, d\mu}\le h_\infty(\Phi)$ (see Remark \ref{rem:topsus}). Thus, for $\mu$ a $k$-step Markov measure on $\Sigma$ and $\nu=\frac{\mu\times\text{Leb}|_{Y}}{\mu\times\text{Leb}(Y)}$ we have  two cases. The first case is that $\frac{h_{\mu}(\sigma)}{\int\tau\,\d\mu}\leq t$ and so $h_\nu(\Phi)\leq t$. The second case is that
$\int\tau\, \d\mu\leq r_t$ and so
$$s_k\geq h_{\mu}(\sigma)-h(\Phi)\int\tau\, \d\mu$$
which since $s_k<0$ gives that
$$\frac{h_{\mu}(\sigma)}{\int\tau\, \d\mu}\leq\frac{s_k}{r_t}+h(\Phi).$$
Putting this together we obtain that
$$h_\nu(\Phi)\leq\max\left\{t, \frac{s_k}{r_t}+h(\Phi)\right\},$$
which since $s_k<0$ yields the result.\end{proof}

\subsection{LDP for suspension flows over the full shift}

In Theorem \ref{thm:ldp} we have the condition that $(x,t)\in R_T(m)$ whereas standard large deviation results would consider sets of the form
\begin{equation}\label{standardld}
E^g_{T}(\gamma):=\left\{(x,t)\in Y:\frac{1}{T}\int_{0}^{T} g(\Phi_s((x,t))\, \d s>\gamma\right\}.
\end{equation}
We look at situations when the results in Theorem \ref{thm:ldp} can be transferred to the sets above.  
We start with a simple observation
\begin{lemma}\label{distortion1}
Suppose that $\Delta_f,\Delta_g$ and $\tau$ have finite first variation and summable variations. Then, there exists $C>0$ such that for all $n\in\N$, $\omega\in\Sigma$ and $x,y\in [\omega_1,\ldots,w_n]$
$$\max\{|S_n\Delta_{f}(x)-S_n\Delta_{f}(y)|,|S_n\Delta_{g}(x)-S_n\Delta_{g}(y)|,|S_n\tau(x)-S_n\tau(y)|\}\leq C.$$
\end{lemma}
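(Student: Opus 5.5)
The estimate is a standard bounded distortion bound. I will prove it for a single function $\phi\in\{\Delta_f,\Delta_g,\tau\}$ with a constant $C_\phi$, and then take $C=\max\{C_{\Delta_f},C_{\Delta_g},C_\tau\}$.

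Fix $n\in\N$, a word $\omega_1,\ldots,\omega_n$ and points $x,y\in[\omega_1,\ldots,\omega_n]$. Write
$$|S_n\phi(x)-S_n\phi(y)|\le \sum_{k=0}^{n-1}|\phi(\sigma^k x)-\phi(\sigma^k y)|.$$
For each $0\le k\le n-1$ the points $\sigma^kx$ and $\sigma^ky$ share their first $n-k$ coordinates, namely $\omega_{k+1},\ldots,\omega_n$. Hence $|\phi(\sigma^kx)-\phi(\sigma^ky)|\le V_{n-k}(\phi)$. Summing gives
$$|S_n\phi(x)-S_n\phi(y)|\le \sum_{j=1}^{n}V_j(\phi)\le V_1(\phi)+\sum_{j=2}^\infty V_j(\phi)=:C_\phi.$$

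The term $j=1$, coming from $k=n-1$, is the only place where the hypothesis beyond summable variations enters. Summability in the paper's sense starts at $n=2$, so $V_1(\phi)$ must be finite separately. This is exactly why the statement assumes finite first variation. With that assumption $C_\phi<\infty$ and is independent of $n$, $\omega$, $x$ and $y$.

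There is no real obstacle here. The only point requiring care is this index bookkeeping, together with noting that the bound is uniform over all cylinders. The latter holds because $V_j(\phi)$ is a supremum over the whole of $\Sigma$, not over a fixed cylinder.
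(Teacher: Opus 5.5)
Your proof is correct and is the standard bounded distortion argument the paper has in mind: the paper states this lemma as a ``simple observation'' without proof, and the same term-by-term estimate appears in \eqref{eq:distortion} in the proof of Lemma~\ref{prop:predens}. There the points share $p+1$ symbols, so only $V_j$ with $j\geq 2$ enter. Your observation that the $k=n-1$ term contributes $V_1(\phi)$, which is why finite first variation must be assumed here, is exactly the right bookkeeping.
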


Throughout this section we will assume that $\Sigma=\N^{\N}$ is the full shift and let $\tau:\Sigma\to [0,\infty)$ be bounded away from $0$, has summable variations and finite first variation. Let $(Y,\Phi)$ be the associated suspension flow. It follows by Theorem \ref{thm:spr_deri} that if $f:Y\to\R$ is SPR and $s_\infty(f)<1$, then $f$ has a unique equilibrium state $\nu_f=\frac{(\mu_f\times\text{Leb})|_Y}{\int \tau d\mu_f}$ where $\mu_{f}$ is the Gibbs state on $\Sigma$ for the function $\Delta_{f}-P^{\Phi}(f)\tau$. Consider the set
$$A_r=\{x\in\Sigma:\tau(x)\geq r\}.$$

\begin{theorem}\label{cor:full}
Let $f:Y\to\R$ be a SPR function with $P^{\Phi}(f)=0$ and $s_\infty(f)<1$. Let $\nu_{f}$ be the equilibrium state and $\mu_{f}$ as above. Let $g:Y\to\R$ be bounded and assume that $\Delta_g$ has summable variations and finite first variation.

\begin{enumerate}
    \item If $\lim_{r\to\infty}\frac{\log\mu_{f}(A_r)}{r}=-\infty$, then for $\gamma<\beta(g)$ we have that
$$\lim_{T\to\infty}\frac{1}{T}\log \nu_f\big(E^g_{T}(\gamma)\big)
=I^{f,g}(\gamma).$$

\item 
If $\limsup_{r\to\infty}\frac{\log\mu_{f}(A_r)}{r}<0,
$ then for $\gamma\in (\int g\d\nu_{f},\beta(g))$ we have that
$$\limsup_{T\to\infty}\frac{1}{T}\log \nu_f\big(E^g_{T}(\gamma)\big)<0.$$

\end{enumerate}
\end{theorem}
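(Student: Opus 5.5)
The plan is to reduce the unrestricted sets $E^g_T(\gamma)$ to the sets $Y^g_T(\gamma)\subset R_T(m)$ of Theorem \ref{thm:ldp}. The reduction uses two ingredients. The full shift lets us freely connect any symbol to the symbol $1$. The Gibbs property of $\mu_f$ (which holds with a uniform constant, by BIP on the full shift) handles the rest, with one exception. The only obstruction is the time spent in the initial and terminal partial excursions, whose roof value can be unbounded. The tail hypothesis on $\mu_f(A_r)$ controls exactly these excursions.

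\textbf{Lower bound in (a).} The lower bound is immediate from Theorem \ref{thm:ldp}(a), since $Y^g_T(\gamma)\subset E^g_T(\gamma)$ up to the identification $Y_0\subset Y$. (On the full shift, the $m$ from Lemma \ref{mixingcond} can even be taken to be $1$.)

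\textbf{Upper bound.} For the upper bound, fix $(x,t)\in E^g_T(\gamma)$ and decompose the orbit segment $[0,T]$ into three pieces:
\begin{itemize}
\item the initial partial excursion in $[x_1]$, of length at most $\tau(x)$;
\item the full excursions $x_2,\ldots,x_{n-1}$;
\item the final partial excursion in $[x_{n}]$, with $n=n_T(x)$.
\end{itemize}
Fix $r>0$ and split $E^g_T(\gamma)$ according to the lengths of the two end excursions.

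\emph{Case 1: both end excursions have length at most $r$.} Then $\tau(x)\le r$ and $\tau(\sigma^{n-1}x)\le r$. I would replace $x_1$ and $x_n$ by the symbol $1$ and use Lemma \ref{distortion1} together with the Gibbs property. This changes $\Delta_f,\Delta_g,\tau$ by at most $O(r)$ and changes measures by at most $e^{O(r)}$. Repeating the covering argument of Section \ref{sec:upperbound}, with the periodic points $z_x$ built directly by concatenation on the full shift, this piece has exponential rate at most $P^\Phi(f+q(g-\gamma))$ plus $O(r)/T$. Hence its rate is at most $I^{f,g}(\gamma)$.

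\emph{Case 2: at least one end excursion is longer than $r$.} Suppose the long excursion has length $\ell\ge r$. The middle part then lives on a shorter time interval $T-\ell$. Its cylinder contributes at most roughly $e^{(T-\ell)\max(I^{f,g}(\gamma'),0)+o(T)}$, where $\gamma'$ is the adjusted threshold; since $g$ is bounded, $\gamma'$ changes by at most $2\|g\|_0\ell/(T-\ell)$. The long excursion itself carries $\mu_f$-mass at most $\mu_f(A_\ell)$. By the Gibbs quasi-product structure of $\mu_f$ on the full shift, these factors multiply. Summing over $\ell$ in integer steps gives a bound of the form
\[
\sum_{\ell\ge r} \mu_f(A_\ell)\, e^{(T-\ell)c},
\]
where $c$ can be taken to be $\max\{I(\cdot)\}\le 0$ on a neighbourhood, or crudely $0$ because $P^\Phi(f)=0$.

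\textbf{Conclusion of (a) and (b).}
\begin{itemize}
\item Under (a), $\log\mu_f(A_\ell)\le -K\ell$ for any $K$ once $\ell$ is large. Taking $r$ large, Case 2 is then negligible compared with $e^{T(I^{f,g}(\gamma)-\epsilon)}$. One subtlety is that $\ell$ may be comparable to $T$, in which case the middle rate is not near $I(\gamma)$. There the crude bound $e^{0\cdot(T-\ell)}e^{-K\ell}$ still suffices once $K>|I^{f,g}(\gamma)|+1$ and $\ell\ge\theta T$. For $\ell\le\theta T$, continuity of $I^{f,g}$ (Lemma \ref{continuity}) handles the threshold shift. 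Letting $\theta\to0$ gives the claimed limit.
\item Under (b), there is only a fixed exponential decay $e^{-c\ell}$. The same split with $\theta$ small shows that every piece decays at rate at most $\max\{I^{f,g}(\gamma'),-c\theta,\ldots\}$. Because $I^{f,g}(\gamma')<0$ for $\gamma'$ near $\gamma$ (Corollary \ref{awayfromzero}(a)), the overall limsup is negative.
\end{itemize}

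\textbf{Main obstacle.} The main obstacle is making Case 2 rigorous. Specifically, one must justify the factorisation "$\nu_f$-mass $\lesssim \mu_f(A_\ell)\times$ (mass of the middle piece)" uniformly in $\ell$ and $T$, including the case where the point starts partway through a long excursion (the $t$ coordinate). I would first try to handle this via the uniform Gibbs constant on the full shift, conditioning on the first symbol.
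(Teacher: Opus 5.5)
Your lower bound is the same as the paper's. Your Case~1 is sound for fixed $r$, provided you keep $x_1,x_n$ and close the orbit with the symbol $1$, as in your $z_x$. The ``replace by $1$, cost $e^{O(r)}$'' justification is not right: on $\{\tau\le r\}$ the function $\Delta_f$ need not be $O(r)$ (if $\tau$ is bounded every symbol qualifies while $\Delta_f\to-\infty$), and replacement is infinitely-many-to-one.

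\textbf{The gap is Case~2.} All the difficulty sits there, and it is not proved. As displayed, the middle factor $e^{(T-\ell)\max(I^{f,g}(\gamma'),0)}$ is just $e^{o(T)}$, since $I^{f,g}\le P^\Phi(f)=0$. What you actually need is a bound of the form $\mathrm{poly}(\ell)\,\mu_f(A_\ell)\,e^{(T-\ell)I^{f,g}(\gamma')}$. It must be uniform in $\ell\in[r,\theta T]$, in the time actually spent in the long excursion (which depends on $t$), and for long excursions at either end. That is exactly the quasi-product factorisation of $\nu_f$-mass you list as the main obstacle. It can probably be extracted from the uniform Gibbs constant on the full shift, but as written the upper bound for end excursions of intermediate length is missing.

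\textbf{The missing idea:} no factorisation is needed if the cutoff is proportional to $T$ rather than fixed.
\begin{enumerate}
\item \emph{Good cylinders.} The paper calls $W=[\omega_1,\ldots,\omega_n]$ good if $\tau\le\epsilon T$ on $[\omega_1]$ and on $[\omega_n]$. It pads $W$ to $\widehat W=[1,\omega_1,\ldots,\omega_n,1^k]$ with $k\asymp\epsilon T$. Then $(\widehat W\times\R)\cap Y_0\subset R_{T_1}(1)$ with $T_1=(1+2\epsilon)T+O(1)$, and averages over $[0,T_1]$ exceed $\gamma_\epsilon(T)\to(\gamma-C_2\epsilon)/(1+2\epsilon)$.
\item \emph{Rate of the good part.} The upper bound of Theorem~\ref{thm:ldp}, valid for every $m$, combined with the Gibbs bound $\mu_f([1^k])\ge C_0\eta^k$, gives rate at most $(1+2\epsilon)(I^{f,g}(\gamma_\epsilon-\rho)+\delta)+O(\epsilon)$. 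The factors $\epsilon T$ (from $\tau\le\epsilon T$ on $W$) and $N(T)=O(T)$ are subexponential. Your own Case~1 argument run with $r=\epsilon T$ would do the same job.
\item \emph{Bad points.} Every remaining point has $\tau\ge\epsilon T-V_1$, with $V_1$ the first variation of $\tau$, either at its initial symbol or at the symbol it occupies at time $T$. Hence the bad set lies in $B\cup\Phi_T^{-1}B$ with $B=\{(x,t):\tau(x)\ge\epsilon T-V_1\}$.
\item \emph{Rate of the bad part.} By $\Phi_T$-invariance alone, its measure is at most $2\nu_f(B)=2\bigl(r\mu_f(A_r)+\int_r^\infty\mu_f(A_s)\,ds\bigr)/\int\tau\,d\mu_f$ with $r=\epsilon T-V_1$. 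This has exponential rate at most $-\alpha\epsilon$, and $-\infty$ under (a).
\end{enumerate}

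Letting $\epsilon\to0$ and using Lemma~\ref{continuity} gives (a). Part (b) follows from $\max\{(1+2\epsilon)(I^{f,g}(\gamma_\epsilon-\rho)+\delta)+O(\epsilon),\,-\alpha\epsilon\}<0$ for small $\epsilon,\rho,\delta$, using Corollary~\ref{awayfromzero}(a). In this scheme your troublesome range $r\le\ell\le\theta T$ is simply absorbed into the good part at an $O(\epsilon)$ cost.
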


\begin{proof}
We let $\gamma<\beta(g)$ and  $t_0=\inf\{\tau(\omega):\omega\in\Sigma\}.$ For each $T>0$ define $N(T)=\lceil\frac{T}{t_0}\rceil+2$. For $1\leq n\leq N(T)$ we set
\begin{eqnarray*}
Y(\gamma,T,n)=\bigg\{[\omega_1,\ldots,\omega_n]:\exists (x,t)\in ([\omega_1,\ldots,\omega_n]\times\mathbb{R})\cap Y_0\text{ with }\int_{0}^{T}g(\Phi_s(x,t)) \, \d s>T\gamma\\
\text{ and }S_{n-1}\tau(x)\le t+T< S_{n}\tau(x)\bigg\}.
\end{eqnarray*}
We now fix $\epsilon>0$ and
$$C(\gamma,T, \epsilon,n)=\{[\omega_1,\ldots,\omega_n]:\sup_{x\in [\omega_1]}\tau(x), \sup_{x\in[\omega_n]}\tau(x)\leq\epsilon T\}.$$
We will initially focus on the collection of good cylinders $G(\gamma,T, \epsilon,n):=Y(\gamma,T,n)\cap C(\gamma,T, \epsilon,n)$.

We take $C>0$ from Lemma \ref{distortion1} and note that this means for every $W\in G(\gamma,T, \epsilon,n)$ and every $y\in W$, we  have
$$S_n\tau(y)\in (T-C,T+C+2\epsilon T).$$
Define $t_1=\inf_{x\in[1]}\tau(x)$,   $t_2=\sup_{x\in[1]}\tau(x)$ and choose the smallest $k$ such that $kt_1>2C+2\epsilon T+t_2$. Thus, if we consider a cylinder set of the form $\widehat{W}:=[1,\omega_1,\ldots,\omega_n,(1)^k]$ where $W=[\omega_1,\ldots,\omega_n]\in G(\gamma,T, \epsilon,n)$ then for $T_1=T+C+2\epsilon T+t_2$ we will have  
$$(\widehat{W}\times\R)\cap Y_0\subset R_{T_1}(1).$$
Moreover, for any $(x,t)\in (\widehat{W}\times\R)\cap Y_0 $ we will have
$$\int_{0}^{T_1} g(\Phi_s(x,t))\, \d s>T\gamma-C_1-C_2\epsilon T$$
for some constants $C_1,C_2$ independent of $T,\epsilon, n$ and $W$. Here we used that $\Delta_g$ has summable variation and finite first variation, together with boundedness of $g$. It follows that
$$\frac{1}{T_1}\int_0^{T_1} g(\Phi_s(x,t))\, \d s>\gamma\frac{T}{T_1}-\frac{C_1}{T_1}-C_2\epsilon \frac{T}{T_1}=:\gamma_{\epsilon}(T).$$
Note that $\gamma_{\epsilon}(T)\to \gamma_\epsilon:=\frac{\gamma-C_2\epsilon}{1+2\epsilon}$ as $T\to\infty$ and $\gamma_\epsilon\to\gamma$ as $\epsilon\to 0$. Fix $\rho>0$. Choose $T$ sufficiently large that $\gamma_\epsilon(T)>\gamma_\epsilon-\rho$. Fix $\delta>0$, using Theorem \ref{thm:ldp} (noticing that the upper bound holds for any $m$) and the Gibbs property, for $T$ sufficiently large we have
\begin{eqnarray*}
\exp\left(T_1(I^{f,g}\left(\gamma_\epsilon-\rho\right)+\delta)\right)&\geq &\nu_{f}\left(\left\{(x,t)\in R_{T_1}(1):\frac{1}{T_1}\int g(\Phi_s(x,t))\, \d s>\gamma_\epsilon-\rho\right\}\right)\\
&\geq&\nu_{f}\left(\left\{(x,t)\in R_{T_1}(1):\frac{1}{T_1}\int g(\Phi_s(x,t))\, \d s>\gamma_\epsilon(T)\right\}\right)\\
&\geq&\sum_{W\in G(\gamma,T, \epsilon,n)}\nu_{f}((\widehat{W}\times\R)\cap Y_0
)\\
&=&\frac{1}{\int\tau d\mu_f}\sum_{W\in G(\gamma,T, \epsilon,n)}\int_{\widehat{W}}\tau d\mu_f\\
&\geq&\frac{t_1}{\int\tau d\mu_f}\sum_{W\in G(\gamma,T, \epsilon,n)}\mu_f(\widehat{W})\\
&\geq&C'\mu_{f}([(1)^k])\sum_{W\in G(\gamma,T, \epsilon,n)}\mu_{f}(W),
\end{eqnarray*}
where $C'$ is independent of $T,n,k$ and $W$. By the Gibbs property there exist $C_0>0$ and $\eta\in(0,1)$ such that $\mu_f([(1)^k])\ge C_0\eta^k$ and therefore
\begin{equation*}
\log \mu_{f}([(1)]^k)\geq \log C_0 +k\log\eta\geq \log C_0+\left(\frac{2C+2\epsilon T+t_2}{t_1}+1\right)\log\eta,
\end{equation*}
where we used that $k=\lfloor\frac{2C+2\epsilon T+t_2}{t_1}\rfloor+1.$

\begin{eqnarray*}
\frac{1}{T}\log\left( \max_{n}\sum_{W\in G(\gamma,T, \epsilon,n)}\mu_{f}(W)\right)&\leq& \frac{T_1}{T} \big(I^{f,g}(\gamma_\epsilon-\rho)+\delta\big)-\frac{2\epsilon\log \eta}{t_1}-\frac{D+\log (C_0C')}{T}
\end{eqnarray*}
for some $D$. In other words
\begin{eqnarray}\label{ineq:good}
\limsup_{T\to\infty}\frac{1}{T}\log\left(\max_{n} \sum_{W\in G(\gamma,T, \epsilon,n)}\mu_{f}(W)\right)&\leq& (1+2\epsilon)(I^{f,g}(\gamma_\epsilon-\rho)+\delta)+O(\epsilon)
\end{eqnarray}

We now go to the main set that we are estimating. Let $V_1$ be the finite first variation of $\tau$. Note that if $W=[\omega_1,\ldots,\omega_n]\in Y(\gamma,T,n)\setminus G(\gamma,T,\epsilon, n)$, then $\sup_{x\in [\omega_1]}\tau(x)>\epsilon T$, or $\sup_{x\in [\omega_n]}\tau(x)>\epsilon T,$ and therefore $\tau(x)\ge\epsilon T-V_1$ on $[\omega_1]$ or on $[\omega_n]$. In particular, for $T>0$ we get
\begin{eqnarray*}\label{calc}
\nu_{f}(E^g_{T}(\gamma))&\leq&\sum_{n=1}^{N(T)}\sum_{W\in Y(\gamma,T,n)}\nu_{f}((W\times\R)\cap Y_0)\nonumber\\
&\leq& \sum_{n=1}^{N(T)}\sum_{W\in G(\gamma,T, \epsilon,n)}
\nu_{f}((W\times\R)\cap Y_0)+\nu_f(\{(x,t)\in Y:\tau(x)\ge\epsilon T-V_1\})\nonumber\\
&+&\nu_f(\Phi_T^{-1}\{(x,t)\in Y:\tau(x)\ge \epsilon T-V_1\})\nonumber\\
&=& \sum_{n=1}^{N(T)}\sum_{W\in G(\gamma,T, \epsilon,n)}
\nu_{f}((W\times\R)\cap Y_0)+2\nu_f(\{(x,t)\in Y:\tau(x)\ge\epsilon T-V_1\})\nonumber\\
\end{eqnarray*}
Here we used that $\nu_f(\Phi_T^{-1}\{(x,t)\in Y:\tau(x)\ge\epsilon T-V_1\})=\nu_f(\{(x,t)\in Y:\tau(x)\ge\epsilon T-V_1\})$ by invariance. Note that if $W\in G(\gamma,T,\epsilon, n)$, then
$$\nu_f((W\times\R)\cap Y_0)=\frac{\int_{W}\tau d\mu_f}{\int \tau d\mu_f}\le (\epsilon T) \frac{ \mu_f(W) }{\int \tau d\mu_f},$$ since $\tau(x)\le \epsilon T$, for every $x\in W$. We obtain that 
\begin{align}\label{ineq:final}
    \nu_{f}(E^g_{T}(\gamma))&\le \sum_{n=1}^{N(T)}\frac{\epsilon T}{\int \tau d\mu_f}\sum_{W\in G(\gamma,T, \epsilon,n)}
\mu_{f}(W)+2\nu_f(\{(x,t)\in Y:\tau(x)\ge\epsilon T-V_1\})\nonumber\\
&\le N(T)\frac{\epsilon T}{\int \tau d\mu_f}\max_n\sum_{W\in G(\gamma,T, \epsilon,n)}
\mu_{f}(W)+2\nu_f(\{(x,t)\in Y:\tau(x)\ge\epsilon T-V_1\})
\end{align}
Note that $N(T)=O(T)$. The exponential growth of the left hand side is bounded by the maximum of the exponential growths of $\max_n\sum_{W\in G(\gamma,T, \epsilon,n)}
\mu_{f}(W)$ and $\nu_f(\{(x,t)\in Y:\tau(x)\ge\epsilon T-V_1\})$  

Note that $$\nu_f\big(\{(x,t)\in Y:\tau(x)\ge r\}\big)=\frac{\int_{A_{r}}\tau d\mu_f}{\int \tau d\mu_f}=\frac{r\mu_f(A_{r})+\int_{r}^\infty \mu_f(A_t)dt}{\int \tau d\mu_f}$$

Let us suppose that $\limsup_{r\to\infty}\frac{\log \mu_f(A_r)}{r}<-\alpha$ for some $\alpha>0$. In other words, for $r$ sufficiently large we have that 
$\mu_f(A_r)<e^{-\alpha r}$. In particular, $\int_r^\infty \mu_f(A_t)dt<\int_r^\infty e^{-\alpha t}dt=\frac{1}{\alpha }e^{-\alpha r}$, 
and therefore 
\begin{align*}\label{ineq:bad}
    r\mu_f(A_{r})+\int_{r}^\infty \mu_f(A_t)dt \le r e^{-\alpha r}+\frac{e^{-\alpha r}}{\alpha}.
\end{align*}
Considering $r=\epsilon T-V_1$ large enough, we get that
\begin{eqnarray}\label{ineq:alpha}
    \limsup_{T\to\infty}\frac{1}{T}\log \nu_f(\{(x,t)\in Y:\tau(x)\ge\epsilon T-V_1\})\le -\alpha\epsilon.
\end{eqnarray}
In part (a) we have that 
 $\lim_{r\to\infty}\frac{\log \mu_f(A_r)}{r}=-\infty$, and therefore $\alpha$ can be arbitrarily large. Then 
 \begin{eqnarray}\label{ineq:inf}
     \limsup_{T\to\infty}\frac{1}{T}\log \nu_f(\{(x,t)\in Y:\tau(x)\ge\epsilon T-V_1\})=-\infty. 
 \end{eqnarray}
 In this case it follows by inequalities (\ref{ineq:good}),  (\ref{ineq:final}) and (\ref{ineq:inf}) that
$$\limsup_{T\to\infty}\frac{1}{T}\log \nu_f( E^g_{T}(\gamma))
\le (1+2\epsilon)(I^{f,g}(\gamma_\epsilon-\rho)+\delta)+O(\epsilon).$$
Since $\delta>0$ and $\rho>0$ are arbitrary, sending $\epsilon \to 0$ we obtain the desired upper bound (see Lemma \ref{continuity}). The lower bound follows directly by Theorem \ref{thm:ldp} since we estimate a restricted subset. 

In case (b) we can only conclude that 
$$\limsup_{T\to\infty}\frac{1}{T}\log \nu_f(E^g_{T}(\gamma))
\le \max\big\{(1+2\epsilon)(I^{f,g}(\gamma_\epsilon-\rho)+\delta)+O(\epsilon), -\alpha\epsilon\big\}.$$
It follows from Corollary \ref{awayfromzero}(a) that under our assumptions $I^{f,g}(\gamma)<0$, and therefore for $\delta>0$, $\rho>0$ and $\epsilon>0$ small we obtain
$$\limsup_{T\to\infty}\frac{1}{T}\log \nu_f\big(E^g_{T}(\gamma)\big)<0,$$
as desired.
\end{proof}

\begin{example} Let $(\Sigma, \sigma)$ be the full shift in a countable alphabet. Let $\tau( x)=\sqrt{x_1}$ and $\Delta_{f}(x)=\log\left(\frac{C}{x_1^2}\right)$ where $C=\frac{6}{\pi^2}$. In this setting, we have $P^{\Phi}(f)=P^{\Phi}_{\infty}(f)=0$ and so $\phi$ is not SPR. 
It can be seen that, in general, we do not have exponential decay for our sets, since
$$\mu_{f}([1,n,1]\times\R\cap Y_0
)=\frac{C^3}{n^2}$$
and so, for example, if you consider $g:Y\to\R$ defined by 
$$g(\omega,t)=\left\{\begin{array}{ccc}1&\text{ if }&\omega_0\text{ is odd}\\
2&\text{ if }&\omega_0\text{ is even}\end{array}\right.$$
In this case, we have $\int g\,\d\nu_{f}<2$ and if we take $\gamma\in (\int g\,\d \nu_{f},2)$
we will have for all $m\in\N$,
$$\lim_{T\to\infty}\frac{1}{T}\log\left(\nu_{f}\left(\left\{(x,t)\in R_T(m):\frac{1}{T}\int_{0}^{T}g(\Phi_s((x,t))\, \d s>\gamma\right\}\right)\right)=0$$
and so $I^{f,g}(\gamma)=0$ for all $\gamma\in (\int g \, \d\nu_{f},2)$.
\end{example}

\begin{example}
Let $(\Sigma, \sigma)$ be the full shift. Let $\tau(x)=\log x_1+1$ and $\Delta_{f}(x)=\log\left(\frac{C}{x_1^2}\right)$  where $C=\frac{6}{\pi^2}$. In this setting, we have $P^{\Phi}(f)=0>P^{\Phi}_{\infty}(f)$ and so $f$ is SPR. Thus, our results can be applied to any bounded $g:Y\to\R$ where $\Delta_{g}$ has bounded variations. Moreover, we have that $I^{f,g}(\gamma)<0$ for all $\gamma\in (\int g\, \d\nu_{f},\beta(\gamma))$. In this case the conclusion of Theorem \ref{cor:full}(a) does not hold, but part (b) does hold. 
\end{example}

\begin{example}
Let $(\Sigma, \sigma)$ be the full shift. Let $\tau(x)=\log x_1+1$ and $\Delta_{f}(x)=-x_1\log 2$. In this setting, we have $P^{\Phi}(f)=0>P^{\Phi}_{\infty}(f)$ and so $f$ is SPR. In this case Theorem \ref{cor:full}(a) holds for any regular and bounded $g:Y\to\R$ where $\Delta_{g}$ has summable variations. Again we will have $I^{f,g}(\gamma)<0$ for all $\gamma\in (\int g\, \d\nu_{f},\beta(\gamma))$.
\end{example}

\subsection{Further examples}
We now turn our attention to where the base map is not the full shift.

\begin{example}[The renewal shift]
Consider the
transition matrix $A=(a_{ij})_{i,j \in \N}$ with $a_{1,1}= a_{1,n}=
a_{n,n-1}=1$ for each $n \ge 2$ and with all other entries equal to
zero. The \emph{renewal shift} is the Markov shift
$(\Sigma_R, \sigma)$ associated with the matrix $A$. This system has topological entropy equal to 
$\log 2$, while its entropy at infinity is zero.
The corresponding thermodynamic formalism is well understood; see \cite[Section 4]{sa2}.
Several interval maps, in particular the Manneville–Pomeau map, can be modeled by this shift.
Suspension flows over the renewal shift have also been studied, for example in \cite[Section 6]{ijt}.
Consider the locally constant roof function $\tau:\Sigma_R \to \R$ $\tau|[n]=t_n$ and $0<c< \tau < C$ with the additional property that $\lim_{n \to \infty} t_n=t_{\infty}$. The associated suspension flow $(Y, \Phi)$ has finite entropy. Let $f:Y \to \R$ be an SPR function with equilibrium measure $\nu_{f}$, such that $\Delta_{f}$ is locally constant with $\Delta_{f}|[n]=a_n$ and  $\lim_{n \to \infty} a_n=a_{\infty}$. 
Consider also the function $g:Y \to \R$  such that $\Delta_{g}$ is locally constant with $\Delta_{g}|[n]=b_n$. If  $\lim_{n \to \infty} b_n=b_{\infty}$ then $\alpha_{\infty}(g)=\beta_{\infty}(g)=\frac{b_{\infty}}{t_{\infty}}$. Theorem \ref{thm:ldp} part $(b)$ applies for every $\gamma<\beta_\infty(g)$. In this case,
\[
I^{f,g}_\infty(\gamma)=\frac{a_\infty}{t_\infty} \quad\text{for }\gamma\leq\beta_\infty(g),
\]
and
\[
I^{f,g}_\infty(\gamma)=-\infty \quad\text{for }\gamma>\beta_\infty(g).
\]
For an arbitrary function $g:Y \to \R$ such that $\Delta_{g}$ is of summable variations, we have $q \mapsto P^{\Phi}_{\infty}(q g)= q \beta_{\infty}(g)$ for $q >0$.
   In this case we have an explicit expression for the rate function at infinity: 
\[
 I^{f,g}_\infty(\gamma)=\inf_{q\ge 0} P^\Phi_\infty(f+q(g-\gamma))=\inf_{q\ge 0} \left(\frac{a_{\infty}}{t_{\infty}} + q \left(\beta_{\infty}(\psi) -\gamma \right)  \right).
 \]
 Thus, if $\gamma \leq \beta_{\infty}(g)$ then $I_\infty(\gamma)= \frac{a_{\infty}}{t_{\infty}}$. Othrewise, $I_\infty(\gamma)=-\infty$.

Note that if $\tau$ is the constant function equal to one we obtain large deviation estimates for the (discrete time) renewal shift.
\end{example}

\begin{example}[Random walk] \label{rw2}
  
Consider the Random Walk  CMS $(\Sigma, \sigma)$ defined in Example \ref{rw}. This is a topologically mixing CMS of finite entropy that does not have a measure of maximal entropy.
Consider the suspension flow $(Y, \Phi)$ over $(\Sigma, \sigma)$ with roof function $\tau$ constant equal to one.
Let $f:Y \to  \R$ be a zero pressure  SPR potential with equilibrium measure $\nu_{f}$ such that $\Delta_{f}$ is of summable variations.  
Let $a, b \in \R$ be two negative numbers, as in Example \ref{rw}, let $g:\Sigma \to \R$ be defined by
\begin{equation*}
g(x)=
\begin{cases}
a & \text{ if  } x_1 \text{ is odd};\\
b & \text{ if  } x_1 \text{ is even}
    \end{cases}
\end{equation*}
For every $t \in \R$ we have $P(tg)=P_{\infty}(t g)$. Assume that $a<b$, we then have that $\alpha(g)=\alpha_{\infty}(g)=a$ and 
$\beta(g)=\beta_{\infty}(g)=b$. Thus, in this example, the range of values for which the conclusions of Theorem \ref{thm:ldp} are valid coincides in statements $(a)$ and $(b)$.
\end{example}

\begin{example}[Loop systems]
Ruette \cite{ru1} studied \emph{loop systems} and constructed examples exhibiting different  dynamical properties.
Consider a countable Markov shift defined by a directed graph with a single central vertex, where $a(n)$ denotes the number of distinct loops of length $n$ based at this vertex. Let $F(z) = \sum_{n=1}^{\infty} a(n) z^n$ and $R$ be its radius of convergence. As observed by Ruette, the recurrence properties of the system are entirely determined by the behavior of $F(z)$ evaluated at $R$: the system is \emph{strongly positive recurrent} (SPR) if $F(R) > 1$, and it is \emph{positive recurrent but not SPR} if $F(R) = 1$ with $F'(R) < \infty$. Different choices of the sequence $a(n)$ yield a wide range of dynamical properties. We highlight two specific cases.
Following Ruette \cite[Example 2.9]{ru1}, we can construct a boundary case by taking $a(k^2)=2^{k^2-k}$ for $k \geq 1$, $a(n)=0$ otherwise. Here, $R=1/2$. We have $F(1/2) = \sum_{k=1}^{\infty} 2^{-k} = 1$ and $F'(1/2) = \sum_{k=1}^{\infty} k^2 2^{1-k} < \infty$. As constructed, the associated system $(\Sigma,\sigma)$ is positive recurrent, but not strongly positive recurrent. To obtain a system that is strongly positive recurrent and has finite entropy, one can choose  $a(n)=2^n$ for all $n \ge 1$. The radius of convergence remains $R=1/2$, but evaluating the generating function yields $F(1/2) = \sum_{n=1}^{\infty} 1 = \infty > 1$, guaranteeing the strict inequality required for the SPR property.

Now consider the suspension flow $(Y,\Phi)$ over an SPR system $(\Sigma,\sigma)$ (such as the one defined by $a(n)=n$) with constant roof function $\tau \equiv 1$. Let $f \colon Y \to \mathbb{R}$ be a zero-pressure strongly positive recurrent potential with equilibrium measure $\nu_{f}$, and assume that $\Delta_{f}$ has summable variations. Then the conclusions of Theorem~\ref{thm:ldp} hold for any bounded function $g \colon Y \to \mathbb{R}$ such that $\Delta_{g}$ has summable variations.
\end{example}

\end{document}